\newtheorem{theorem}{Theorem}[section]
\newtheorem{proposition}[theorem]{Proposition}
\newtheorem{definition}[theorem]{Definition}
\newtheorem{lemma}[theorem]{Lemma}
\newtheorem{corollary}[theorem]{Corollary}
\newtheorem{remark}[theorem]{Remark}
\newtheorem{question}{Question}
\newcommand{\rk}{\text{rk}}
\newcommand{\ad}{\text{ad}}
\newcommand{\tr}{\text{tr}}
\newcommand{\SL}{\text{SL}}
\newcommand{\GL}{\text{GL}}
\newcommand{\id}{\text{Id}}
\newcommand{\im}{\mathrm{im}}
\newcommand{\Isom}{\text{Isom}}
\newcommand{\Ad}{\text{Ad}}
\newcommand{\Aut}{\text{Aut}}
\newcommand{\BN}{{\bf N}} 
\newcommand{\BR}{{\bf R}}
\newcommand{\BC}{{\bf C}}
\newcommand{\BH}{{\bf H}}
\newcommand{\lieg}{{\mathfrak{g}}}
\newcommand{\liep}{{\mathfrak{p}}}
\newcommand{\alggl}{{\mathfrak{g}\mathfrak{l}}}
\newcommand{\algsl}{{\mathfrak{s}\mathfrak{l}}}
\newcommand{\alginf}{{\mathfrak{i}\mathfrak{n}\mathfrak{f}}}
\newenvironment{Pf}{\medskip \noindent {\bf Proof: }}
   {$\diamondsuit$ }
\let\Cal\mathcal 
\let\Bbb\mathbb
\let\frak\mathfrak
\let\phi\varphi
\newcommand{\al}{\alpha}
\newcommand{\om}{\omega}
\newcommand{\ka}{\kappa}
\newcommand{\ph}{\varphi}
\newcommand{\la}{\lambda}
\newcommand{\Ga}{\Gamma}
\newcommand{\La}{\Lambda}
\newcommand{\Om}{\Omega}
\newcommand{\x}{\times}
\renewcommand{\o}{\circ}
\begin{document}
\pagenumbering{arabic} \title{Essential Killing fields of parabolic
  geometries} \author{Andreas \v Cap and Karin Melnick} \date{\today}
\thanks{This project was initiated during the second author's Junior
  Research Fellowship at the Erwin Schr\"odinger Institute in Vienna,
  and was continued during a workshop at the ESI, ``Cartan
  connections, geometry of homogeneous spaces, and dynamics.''  \v Cap
  is supported by Fonds zur F\"orderung der wissenschaftlichen
  Forschung, project P23244-N13, and Melnick is supported by NSF grant
  DMS-1007136}

\begin{abstract}
We study vector fields generating a local flow by automorphisms of a
parabolic geometry with \emph{higher order fixed points}.  We develop
general tools extending the techniques of \cite{nagano.ochiai.proj},
\cite{frances.riemannien}, and \cite{fm.champsconfs}, and we apply
them to almost Grassmannian, almost quaternionic, and contact
parabolic geometries, including CR structures.  We obtain descriptions
of the possible dynamics of such flows near the fixed point and strong
restrictions on the curvature; in some cases, we can show vanishing of
the curvature on a nonempty open set.  Deriving consequences for a
specific geometry entails evaluating purely algebraic and
representation-theoretic criteria in the model homogeneous space.
\end{abstract}
\maketitle

\emph{Dedicated to Michael Eastwood on the occasion of his 60th birthday.}

\section{Introduction}

An array of results in differential geometry tell us that geometric
structures admitting a large group of automorphisms are special and
must have a particularly simple form.  For example, a Riemannian
manifold $M^n$ with $\Isom(M)$ of maximum possible dimension
$\tfrac{n(n+1)}{2}$ must have constant sectional curvature and thus be
a space form.  More generally, the maximal dimension for the Lie algebra of
Killing vector fields on a Riemannian manifold, or for the Lie algebra
of infinitesimal automorphisms for many classical geometric structures,
can be only attained on open subsets of a homogeneous model.

%Automorphisms and infinitesimal automorphisms are a primary tool in
%the study of geometric structures. It is a well known fact in this
%direction that geometric structures admitting many symmetries must be
%particularly simple. 
 
In some cases, the existence of a \textit{single} automorphism or
infinitesimal automorphism of special type restricts the geometry. 
Special automorphisms that exists for some geometric structures
are those that equal the identity to first order at a point; note that
because of the exponential map, Riemannian metrics never admit such
automorphisms, except the identity.  The projective transformations of
projective space ${\bf RP}^n$, on the other hand, do include such
automorphisms: there is $\id\neq g \in \Aut({\bf RP}^n)$ with $g.x =
x$ and $Dg_x = \id$. The space ${\bf RP}^n$ viewed as a homogeneous
space of the group of projective transformations is the model for
\textit{classical projective structures}. Such a structure on a
manifold $M$ is an equivalence class $[ \nabla ]$ of torsion-free
linear connections on $TM$ having the same sets of geodesics up to
reparametrization. An automorphism is a diffeomorphism of $M$
preserving $[\nabla]$, or, equivalently, preserving the corresponding
family of geodesic paths as unparametrized curves.

For $M$ connected, an automorphism of a classical projective structure
on $M$ is uniquely determined by its two--jet at a single point.
Non--trivial automorphisms fixing a point to first order are examples
of \emph{essential} automorphisms---ones not preserving any connection
in the projective class $[\nabla]$.  Nagano and Ochiai
\cite{nagano.ochiai.proj} proved that if a compact, connected manifold
$M^n$ with a torsion-free connection admits a nontrivial vector field
for which the flow is projective and trivial to first order at a point
$x_0$, then $M$ is projectively \emph{flat} on a neighborhood of
$x_0$---that is, locally projectively equivalent to ${\bf RP}^n$.

Pseudo--Riemannian conformal structures may also admit non--trivial
automorphisms which equal the identity to first order in a point.  In
this case, Frances and the second author prove analogous results in
\cite{frances.riemannien} and \cite{fm.champsconfs}.  Their theorems
say that if a conformal vector field $X$ vanishes at a point $x$, and
if the flow $\{ \varphi^t_X \}_{t \in \BR}$ is unbounded but has
precompact differential at $x$, then the manifold is conformally flat
on a nonempty open set $U$ with $x \in \overline{U}$.

Both proofs make use of the Cartan geometry canonically associated to
the structures in question and of the contracting---though not
necessarily uniformly contracting---dynamics of the given flows.  In
both cases, the Cartan geometry is a \emph{parabolic geometry}, one
for which the homogeneous model is $G/P$ for G a semisimple Lie group
and $P$ a parabolic subgroup.  An introduction to the general theory
of parabolic geometries can be found in \cite{cap.slovak.book.vol1}.
See \cite{cap.srni04} and \cite{cap.deformations} for general results
on automorphisms and infinitesimal automorphisms.

In this article, we develop machinery to apply these ideas to study the 
behavior of a certain class of flows fixing a point that include the projective and conformal flows described above, in the general setting of parabolic geometries.
Our results lead to descriptions of the possible dynamics of such flows near the fixed point and to strong restrictions on the curvature.  In some cases, we can show vanishing of the curvature on a nonempty open set.  Deriving consequences for a specific geometry entails evaluating purely algebraic and representation-theoretic criteria for the pair $(G,P)$.  
% We prove flatness on a non--empty open set and are able to classify and describe automorphic flows that equal the identity to first order in a point in many specific situations.

\subsection{Background}

\subsubsection{Cartan geometries of parabolic type}

Let $G$ be a semisimple Lie group with Lie algebra $\lieg$.  A
parabolic subalgebra of $\lieg$ can be specified by a $|k|$--grading
for some positive integer $k$, which is a grading of $\lieg$ of the
form $\lieg=\lieg_{-k}\oplus\dots\oplus\lieg_k$ such that no simple
ideal is contained in the subalgebra $\lieg_0$, and such that
the subalgebra $\lieg_- =\oplus_{i<0}\lieg_i$ is generated by $\lieg_{-1}$.  The parabolic subalgebra determined by the grading is then $\frak p=\oplus_{i\geq
  0}\lieg_i$, and a parabolic subgroup $P < G$ is a subgroup with
Lie algebra $\frak p$.  It is a fact that
$$ 
N^0_G(\frak p) \leq P \leq N_G(\frak p)
$$ 
where $N_G(\frak p)$ is the normalizer and $N^0_G(\frak p)$ is its
connected component of the identity.  The center of $\lieg_0$ contains the \emph{grading element} $A$, for which each $\lieg_i$ is an eigenspace of $\ad(A)$ with eigenvalue $i$.
The $|k|$--gradings of a given
Lie algebra $\lieg$ correspond to subsets of the simple roots when
$\lieg$ is complex, and subsets of simple restricted roots for $\lieg$
real, associated to a choice of Cartan subalgebra (which is maximally
non--compact in the real case); see section 3.2 of
\cite{cap.slovak.book.vol1}.

Defining $\lieg^i = \oplus_{j \geq i} \lieg_j$ makes $\lieg$ into a
filtered Lie algebra such that $\liep = \lieg^0$. The parabolic
subgroup $P$ acts by filtration--preserving automorphisms
under the adjoint action.  The subgroup $G_0 < P$ preserving the grading of $\lieg$ has Lie algebra $\lieg_0$.
Denote $\liep_+ = \lieg^1$, and let $P_+ <
P$ be the corresponding subgroup; it is unipotent and normal in $P$,
and $\exp:\liep_+\to P_+$ is a diffeomorphism.  Then $G_0 \cong P / P_+$, and it is closed and reductive. 

\begin{definition}  
\label{def:cartan.geometry}
Let $G$ be a Lie group with Lie algebra $\lieg$ and $P$ a closed
subgroup.  A \emph{Cartan geometry} on $M$ modeled on the pair
$(\lieg,P)$ is a triple $(M,B,\omega)$, where
\begin{enumerate}
\item $\pi : B \rightarrow M$ is a principal $P$-bundle 
\item $\omega \in \Omega^1(B,\lieg)$ is the \emph{Cartan connection}, satisfying
\begin{enumerate}
\item for all $b \in B$, the restriction $\omega_b : T_bB \rightarrow
  \lieg$ is a linear isomorphism
\item for all $p \in P$, the pullback $R_p^* \omega = \Ad(p^{-1}) \circ \omega$
\item for all $X \in \liep$, if $\tilde{X}$ is the \emph{fundamental
  vector field} $\tilde{X}(b) = \left. \frac{\mathrm{d}}{\mathrm{d}t}
  \right|_0 b.e^{tX}$, then $\omega(\tilde{X}) = X$.
\end{enumerate}
\end{enumerate}
\end{definition}

The Cartan connection generalizes the left-invariant Maurer-Cartan
form $\omega_G$ on $G$.  The following curvature is a complete obstruction
to local isomorphism of $(M,B,\om)$ to the homogeneous model $(G/P,G, \omega_G)$.

\begin{definition}\label{def:curvature}
 The \emph{curvature} of the Cartan connection $\om$ is the two--form
 $K\in\Om^2(B,\frak g)$ given by
$$K(\xi,\eta)=d\om(\xi,\eta)+[\om(\xi),\om(\eta)]$$ 
\end{definition}

\begin{definition}  A \emph{parabolic geometry} on a manifold $M$ is a
  Cartan geometry on $M$ modeled on $(\lieg, P)$ for $G$ a semisimple
  Lie group and $P$ a parabolic subgroup.   
\end{definition}

The Cartan connection $\om$ gives rise to natural local charts on $B$
as follows. To each $A\in\frak g$ corresponds an $\omega$--constant
vector field $\tilde A\in\frak X(B)$, characterized by $\om(\tilde
A)\equiv A$.  Note that $\tilde A$ is the fundamental vector field if
$A\in\liep$.  For any $b\in B$, and sufficiently small $A \in \lieg$,
define $\exp(b,A)=\exp_b(A)$ to be the image of $b$ under the
time-one flow along $\tilde A$.  There is a neighborhood $U$ of $0 \in
\lieg$ on which $\exp_b$ is defined and a diffeomorphism onto an open
subset of $B$. Composing the projection $\pi$ with the restriction of
$\exp_b$ to any linear subspace in $\lieg$ complementary to $\liep$,
we obtain a local chart on $M$.  The exponential map gives rise to a
notion of distinguished curves and to normal coordinates on a
parabolic geometry:

\begin{definition}\label{def:curves}
Consider a parabolic geometry on $M$ of type $(\lieg,P)$.
% and let $\lieg_- = \oplus_{i < 0} \lieg_i$ be the $G_0$-invariant subalgebra complementary to $\liep$.
\begin{itemize}
\item For $X \in \lieg$, an \emph{exponential curve} in $M$ is the projection to $M$ of a curve $t\mapsto\exp(b,tX)$ for some $b \in B$. It is a \emph{distinguished curve} of the
geometry if $X \in \lieg_-$.

\item A \emph{distinguished chart} on $M$ is a chart with values
in $\lieg_-$ obtained as a local inverse of $\pi\o\exp_b|_{\lieg_-}$.  
\end{itemize}
\end{definition}

%The properties of the resulting families of (exponential and
%distinguished) curves depend on the specific structure in question as
%well as on the initial direction of the curve.  

For a given parabolic model, exponential and distinguished curves can be classified by the \emph{geometric type} of the initial direction (see section
\ref{subsec.main.question} below).  Section 5.3 of \cite{cap.slovak.book.vol1} contains thorough descriptions of the
classes of distinguished curves for many parabolic models.  In the next section, we introduce \emph{normal charts}, with values in
tangent spaces.

\subsubsection{Adjoint tractor bundle and infinitesimal
  automorphisms}\label{1.1.2}

Natural vector bundles on parabolic geometries modeled on $G/P$ can be
obtained as associated bundles to the Cartan bundle.  Given a
representation $\Bbb W$ of $P$, form $B\x_P\Bbb W$. Using the Cartan
connection, such bundles can sometimes be identified with tensor
bundles; for example, the adjoint representation restricted to $P$
descends to the quotient vector space $\lieg / \liep$, and
$B\x_P(\lieg/\liep)\cong TM$. The main idea to define this isomorphism
is to note that for each $b\in B$ with $\pi(b)=x\in M$, the linear
isomorphism $\om_b:T_bB\to\lieg$ induces a linear isomorphism
$T_xM\cong T_bB/\ker(D_b\pi)\to\lieg/\liep$.

For $b\in B$ with $\pi(b)=x\in M$, the linear isomorphism
$T_xM\to\lieg/\liep\to\lieg_-$ composed with the normal coordinate
chart $\pi\o\exp_b$ gives a diffeomorphism from an open neighborhood
of $0$ in $T_xM$ onto an open neighborhood of $x\in M$, which we will
refer to as a \textit{normal coordinate chart} for $M$. By
construction, a straight line $\{t\xi:t\in \BR \}$ through $0 \in
T_xM$ corresponds to a distinguished curve through $x$ with initial
direction $\xi$.

Two further associated bundles to the Cartan bundle will play a role
in the sequel.  The Killing form of $\lieg$ gives an identification of
$(\lieg/\liep)^*$ with $\liep_+$, so $B\x_P\liep_+\cong T^*M$.  The
\textit{adjoint tractor bundle} is $\Cal AM=B\x_P\lieg$ and is useful
for studying infinitesimal automorphisms. The filtration of $\lieg$
gives rise to a filtration $\Cal AM=\Cal
A^{-k}M\supset\dots\supset\Cal A^kM$ by smooth subbundles. Since
$\lieg^1=\liep_+$ and $\lieg^0=\liep$, we see that $\Cal A^1M\cong
T^*M$ and $\Cal AM/\Cal A^0M\cong TM$.  Denote by $\Pi:\Cal AM\to TM$
the resulting natural projection.

The curvature of $\om$ from definition \ref{def:curvature} can be
naturally viewed as an element $\ka\in\Om^2(M,\Cal AM)$. Indeed, from
the defining properties of $\om$ it follows easily that
$K\in\Om^2(B,\lieg)$ is horizontal and $P$--equivariant and thus
corresponds to a form $\ka$ as above.

Vector fields on $B$ are in bijective correspondence with
$\lieg$--valued smooth functions via $\xi\mapsto \om(\xi)$.
Equivariance of $\om$ immediately implies that $\om(\xi)$ is a
$P$--equivariant function if and only if $\xi$ is a right--$P$--invariant vector field.  The
space $\Ga(\Cal AM)$ of smooth sections of $\Cal AM$ can be naturally
identified with the space $\frak X(B)^P$ of $P$--invariant vector
fields on $B$; note that these descend to $M$.
On the bundle $\mathcal{A}M$, the corresponding projection is $\Pi$.

An \emph{automorphism} of $(M,B,\omega)$ is a principal bundle
automorphism that preserves $\omega$.  These form a Lie group, which
will be denoted $\mbox{Aut}(M,B,\omega)$.  An \emph{infinitesimal
  automorphism} is given by $\xi\in\frak X(B)^P$ such that $\Cal
L_\xi\om=0$, where $\Cal L$ denotes the Lie derivative. (For a pseudo-Riemannian metric, infinitesimal automorphisms are called Killing fields.) 
%This Lie
%derivative can be easily computed in terms of standard tools for
%parabolic geometries---see \cite{cap.srni04} and
%\cite{cap.deformations}---but the detailed result is not relevant for
%our purposes.  
An infinitesimal automorphism $\tilde{\eta}$ descends
to a vector field $\eta$ on $M$.  The resulting subalgebra of
$\frak{X}(M)$ will be denoted $\alginf(M)$ below; note these vector
fields are not assumed to be complete.

\subsubsection{Normal parabolic geometries and harmonic curvature}

Parabolic geometries encode certain underlying geometric structures.
First note that via the isomorphism $TM\cong \Cal AM/\Cal A^0M$, a
parabolic geometry of type $(\lieg,P)$ gives rise to a filtration
$TM=T^{-k}M\supset\dots\supset T^{-1}M$ of the tangent bundle, where
$T^iM=\Cal A^iM/\Cal A^0M$.  This filtration gives rise to a
filtration of $\Omega^2(M,\Cal AM)$ by homogeneity:
$\tau\in\Om^2(M,\Cal AM)$ is called \emph{homogeneous of degree
  $\geq\ell$} if for $\xi\in T^iM$ and $\eta\in T^jM$ the value
$\tau(\xi,\eta)\in \Cal A^{i+j+\ell}M$.

The geometry $(M,B,\om)$ is \textit{regular} if the curvature 2-form
$\kappa$ is homogeneous of degree at least 1.  A Cartan geometry is
\emph{torsion-free} if $\ka$ has values in $\Cal A^0M\subset\Cal AM$;
torsion-free implies regular.  Now assuming regularity, the underlying
structure of a parabolic geometry consists of the filtration
$\{T^iM\}_{i \in \BN}$ of the tangent bundle and a reduction of structure group of
the associated graded of this filtered bundle to 
$G_0$.  Conversely, any such structure of a filtration with a $G_0$-reduction can be obtained from
some regular parabolic geometry.

These geometric structures are equivalent, in the categorical sense,
to regular parabolic geometries satisfying an additional condition on
$\kappa$ called \emph{normality}.  The Lie algebra homology
differentials for the Lie algebra $\frak p_+$ with coefficients in the
module $\lieg$ defines a $P$--equivariant homomorphism $\La^k\frak
p_+\otimes\lieg\to \La^{k-1}\frak p_+\otimes\lieg$, which is
traditionally denoted by $\partial^*$ and called the \textit{Kostant
  codifferential}.  For $k=2$, this homomorphism gives on the level of
associated bundles a natural bundle map $\La^2T^*M\otimes\Cal AM\to
T^*M\otimes\Cal AM$, also denoted by $\partial^*$. Now the geometry is
called \textit{normal} if $\partial^*\ka=0$.

The projective and conformal structures mentioned above correspond to
$|1|$--gradings.  Other geometric structures arising from parabolic
Cartan geometries include almost-Grassmannian and almost-quaternionic
structures, hypersurface--type CR structures, path geometries, and
several types of generic distributions.

The equivalence in the categorical sense implies that any automorphism
of the underlying structure uniquely lifts to an automorphism of the
parabolic geometry. The analogous result for vector fields says any
infinitesimal automorphism $\eta$ of the underlying structure lifts
uniquely to $\tilde\eta\in\frak X(B)^P$ such that $\Cal
L_{\tilde\eta}\om=0$. Conversely, projecting an infinitesimal
automorphism of $(M,B,\om)$ to $M$ gives an infinitesimal automorphism
of the underlying structure.

The normality condition for parabolic geometries can
also be used to extract the essential part of the curvature of the
canonical Cartan connection. As mentioned above, the Kostant
codifferential induces bundle maps
$$
\La^3T^*M\otimes\Cal AM\to\La^2T^*M\otimes\Cal AM\to T^*M\otimes\Cal AM.
$$ 
Since these maps come from a homology differential, the composition
of the two bundle maps above is zero, so there are natural subbundles
$\im(\partial^*)\subset\ker(\partial^*)\subset \La^2T^*M\otimes\Cal
AM$. By construction, the quotient bunde $\Cal
H_2=\ker(\partial^*)/\im(\partial^*)$ can be realized as
$B\x_PH_2(\frak p_+,\frak g)$, and the latter Lie algebra homology
group can can identified with the Lie algebra cohomology group
$H^2(\frak g_-,\lieg)$. For a regular normal parabolic geometry, the
curvature $\ka$ actually is a section of $\ker(\partial^*)$.  The
\textit{harmonic curvature} $\ka_H\in\Ga(\Cal H_2)$ is the image of
$\ka$ under the obvious quotient projection.

General theorems assert that no information is lost in passing from
$\ka$ to $\ka_H$. First, vanishing of $\ka_H$ on an open subset
$U\subset M$ implies vanishing of $\ka$ on $U$. In fact, there is a
natural differential operator $S:\Ga(\Cal H_2)\to\Om^2(M,\Cal AM)$
such that $S(\ka_H)=\ka$. The crucial advantage of the harmonic
curvature is that one can show that the representation $H_2(\frak
p_+,\frak g)$ is always completely reducible, so the corresponding
associated bundle is a simpler geometric object than
$\La^2T^*M\otimes\Cal AM$.  The structure of $H_2(\frak p_+,\frak g)$
can be computed with Kostant's version of the Bott--Borel--Weil
theorem---see section 3.3 of \cite{cap.slovak.book.vol1}. We will use
without further citation the resulting descriptions of harmonic
curvature components for the individual geometries we discuss.

\subsection{Higher order fixed points and the main questions}
\label{subsec.main.question}

Let $(M,B,\omega)$ be a regular normal parabolic geometry of type
$(\lieg,P)$.  Let $\eta \in \alginf(M)$, and denote the induced vector
field on $B$ by $\tilde\eta$ and by $s$ the corresponding section of
$\Cal AM$.  For $x_0 \in M$, we have $\eta(x_0) = 0$ if and only if $s(x_0)\in
\Cal A^0M$, equivalently if $\om_{b_0}(\tilde\eta)\in\frak p$ for
any $b_0 \in \pi^{-1}(x_0)$.

\begin{definition}
The infinitesimal automorphism $\eta$, or the corresponding section
$s$ of $\mathcal{A}M$, has a \emph{higher order fixed point} at
$x_0$ if $s(x_0)\in\Cal A^1M$.  In this case, the \textit{isotropy at
  $x_0$} of $\eta$ (or of $s$) is the element of the cotangent space
corresponding to $s(x_0)$.
\end{definition}

% DEFINE ISOTROPY ON GROUP LEVEL, TOO?

Via $b_0\in\pi^{-1}(x_0)$ the isotropy $\al\in T^*_{x_0}M$ corresponds
to an element in $\frak p_+$. The \textit{geometric type} of the
infinitesimal automorphism at $x_0$ is defined to be the $P$--orbit of
this element, which is independent of the choice of $b_0$.  These
geometric types give rise to an initial classification of higher-order
fixed points.  For example, in the conformal case, there is a natural
inner product on $T^*_{x_0}M$ up to scale, so isotropies can be postive,
null, or negative.  For algebraically more complicated models, as for
CR structures, the cotangent bundle has a natural filtration induced
by $\Cal A^1M\supset\dots\supset\Cal A^kM$, which leads to a variety
of possible geometric types of isotropies.  These will be discussed in detail in several
examples below. In all cases we consider there are only finitely many
orbits, and in general there are finiteness results of E.~Vinberg \cite{vinberg.graded}. 

{\bf Main Questions}: \emph{What special dynamical properties are shared by
infinitesimal automorphisms admitting a higher order fixed point with
isotropy of a certain geometric type?  What curvature restrictions are implied by existence of such an automorphism?  Which types of higher order
fixed points in $x_0$ imply that $(M,B,\omega)$ is locally flat on a
nonempty open set $U$ with $x_0 \in \overline{U}$?}

%A restriction on the Cartan curvature at a higher order fixed point
%was deduced in \cite{cap.srni04}: Suppose $\eta\in\alginf(M)$ has a
%higher order fixed point in $x_0$ and $\tilde\eta\in\frak X(B)$ is the
%canonical lift. Then $\Cal L_{\tilde\eta}\om=0$ immediately implies
%$\Cal L_{\tilde\eta}K=0$ which in turn shows that
%$\tilde\eta\cdot\ka=0$, where $\ka$ is viewed as a smooth function
%$B\to \La^2\liep_+\otimes\lieg$.  For $b_0\in\pi^{-1}(x_0)$, we have
%$\om(b_0)(\tilde\eta)=Z\in\frak p_+$ and $\ka(b_0)\in\La^2\frak
%p_+\otimes\frak g$. Since $\tilde\eta$ is tangent to the fiber
%$\pi^{-1}(x_0)$, and the function $\ka$ is equivariant, it follows
%that $Z$ acts trivially on $\ka(b_0)$ via the algebraic action induced
%by the adjoint representation in $\frak g$.

%To exploit this condition, one may either use explicit
%descriptions of the Cartan curvature or interpret the condition in
%terms of the harmonic curvature by passing to the component of lowest
%homogeneity. It is a fact that the lowest homogeneous component of
%$\ka$ always is a component of $\ka_H$.  Usually, the condition is
%much stronger when the component in question can be interpreted as a
%torsion than when the component is a curvature.  In any case, the
%meaning heavily depends on the type of geometry and on
%the geometric type of $Z$.

\begin{remark}
  The concept of essential automorphisms, which previously existed for
  conformal and projective structures, has recently been extended to
  all parabolic geometries by J.~Alt in \cite{alt.essential.def}.  It
  is immediate from his proposition 3.4 that a Killing field with a
  higher order fixed point is essential.
\end{remark}

\subsection{Results}

We develop general tools in section \ref{sec.general} that give the
precise action of a flow on specific curves emanating from a higher
order fixed point.  Several of these propositions generalize the tools
of \cite{nagano.ochiai.proj} and \cite{fm.nilpconf}.  We can apply
them to recover the previously cited theorems on higher order fixed
points for projective and conformal flows.  These applications are
presented in a separate article \cite{cap.me.proj.conf}.

In section \ref{sec.grassmann}, we apply these tools to
$(2,n)$-almost-Grassmannian structures to describe the two types of
higher order fixed points in these geometries, and we show in theorem
\ref{thm.grassmannian} that if the geometry is torsion-free, then
existence of either type implies flatness on a nonempty open set.  The
proofs in this section easily adapt to prove an analogous result for
almost-quaternionic structures, without any torsion-freeness assumption
(see theorem \ref{thm.quaternion}).

Next we prove a general result for parabolic contact structures (see
section \ref{sect:contact}): another natural generalization of the
hypotheses in the projective and conformal results
\cite{nagano.ochiai.proj} and \cite{fm.champsconfs} is to assume that
a flow fixes a point $x_0$ and has trivial derivative at $x_0$, which
in the parabolic contact case is a stronger assumption than $x_0$
being a higher order fixed point.  Under this hypothesis, we prove in
theorem \ref{thm.contact} that the curvature vanishes on an open set
with $x_0$ in its closure; in some cases, we can further deduce
flatness on a neighborhood of $x_0$.

Finally, in section \ref{sect:CR}, we treat partially integrable
almost-CR structures.  Theorem \ref{thm.cr} says the harmonic
curvature always vanishes at a higher order fixed point, and certain
types of higher order fixed points imply flatness on a nonempty open
set.  A consequence of this theorem is a local $C^\omega$ version of
the Schoen-Webster theorem on automorphisms of strictly pseudoconvex
CR structures (see theorem \ref{thm.strict.cr}; compare also
\cite{vitushkin.local.schoen} and \cite{kruzhilin.local.schoen}.).
For general nondegenerate CR structures, we are left with the
following question. (Beloshapka \cite{beloshapka.str.ess.cr} and Loboda
\cite{loboda.str.ess.cr} have proved the answer is no in the case of
real-analytic hypersurfaces of complex manifolds.)

\begin{question}
Is there a non-flat partially integrable almost-CR manifold, the
automorphism group of which has a higher-order fixed point?
\end{question}

\section{General results}
\label{sec.general}

We note that the holonomy calculations of propositions
\ref{prop.attractor} and \ref{prop.sl2.triple} below generalize lemma
5.5 of \cite{nagano.ochiai.proj} from the projective setting and
proposition 4.5 of \cite{fm.nilpconf} from the conformal case.

We begin with a basic proposition from \cite{fm.nilpconf} that
computes the holonomy of an automorphism of a Cartan geometry with
isotropy $g$ in terms of the action of $g$ on $G$. 

\begin{proposition} \cite[prop 4.3]{fm.nilpconf}
\label{prop.basic.holonomy}
Let $\varphi \in \mbox{Aut}(M,B,\omega)$.
Suppose
\begin{itemize}
\item $\varphi b_0 = b_0 g$ for some $b_0 \in B$ and $g=g_0 \in P$
\item $\exp(b_0,sU)$ is defined for $s$ in an interval $I$ around $0$
  and for some $U \in \lieg$
\item $g e^{sU} = e^{c(s) U} p(s)$ in $G$, where $p(t) : I \rightarrow
  P$ with $p(0) = g_0$, and $c : I \rightarrow I'$ is a diffeomorphism
  fixing $0$.
\end{itemize}
Then the corresponding equation holds in $B$: $\exp(b_0,sU)$ is also
defined on $I'$, and
$$ \varphi \exp(b_0,sU) = \exp(b_0,c(s)U) p(s)$$
 \end{proposition}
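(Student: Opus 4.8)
The plan is to remove $\varphi$ from the statement at the outset and reduce everything to an ODE-uniqueness argument internal to $(M,B,\om)$. Since $\varphi$ is an automorphism we have $\varphi^*\om=\om$, and this forces $\varphi$ to preserve every $\om$-constant vector field: evaluating $\om_{\varphi(b)}$ on $D\varphi_b\,\tilde U(b)$ and using $\varphi^*\om=\om$ gives $\om_{\varphi(b)}(D\varphi_b\,\tilde U(b)) = U = \om_{\varphi(b)}(\tilde U(\varphi(b)))$, whence $D\varphi_b\,\tilde U(b)=\tilde U(\varphi(b))$ because $\om_{\varphi(b)}$ is a linear isomorphism. Thus $\varphi$ commutes with the flow of $\tilde U$, so $\varphi\exp(b_0,sU)=\exp(\varphi b_0,sU)=\exp(b_0 g,sU)$ wherever the left side is defined, in particular for $s\in I$. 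Writing $\beta(s)=\exp(b_0 g,sU)$, the claim becomes the assertion that $\beta(s)=\exp(b_0,c(s)U)\,p(s)$, which no longer mentions $\varphi$.

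To prove this I would introduce $\rho(s)=\beta(s)\,p(s)^{-1}=R_{p(s)^{-1}}\beta(s)$, manifestly defined and smooth for $s\in I$, and show $\rho(s)=\exp(b_0,c(s)U)$. This is the crucial move for the domain assertion: a priori $\exp(b_0,c(s)U)$ need not be defined, since its domain is part of the conclusion, so instead of differentiating it I produce it as the already-defined curve $\rho$. I would compute $\om(\rho'(s))$ by a Leibniz rule with two contributions. The first, from differentiating $\beta$ through the right translation $R_{p(s)^{-1}}$, is governed by property (2)(b) of Definition \ref{def:cartan.geometry}: $\om\big(DR_{p(s)^{-1}}\beta'(s)\big)=\Ad(p(s))\,\om(\beta'(s))=\Ad(p(s))U$, using $\om(\beta'(s))=U$. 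The second, from the motion of $p(s)^{-1}$ with base point fixed, is a fundamental vector field, so property (2)(c) identifies its $\om$-value as the left logarithmic derivative of $s\mapsto p(s)^{-1}$, namely $-p'(s)p(s)^{-1}$. Hence $\om(\rho'(s))=\Ad(p(s))U-p'(s)p(s)^{-1}$.

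Next I would feed in the hypothesis in $G$. Taking the left logarithmic derivative of $g\,e^{sU}=e^{c(s)U}p(s)$ gives $U=c'(s)\Ad(p(s)^{-1})U+p(s)^{-1}p'(s)$; applying $\Ad(p(s))$ and using $\Ad(p(s))\big(p(s)^{-1}p'(s)\big)=p'(s)p(s)^{-1}$ yields exactly $\Ad(p(s))U-p'(s)p(s)^{-1}=c'(s)U$. Combined with the previous paragraph, $\om(\rho'(s))=c'(s)U$, so $\rho$ is a $c$-reparametrized integral curve of $\tilde U$. Since $c(0)=0$ and $p(0)=g_0=g$, the initial value is $\rho(0)=(b_0 g)g^{-1}=b_0$. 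Reparametrizing by $c^{-1}$, the curve $t\mapsto\rho(c^{-1}(t))$ is an honest integral curve of $\tilde U$ through $b_0$, defined for $t\in I'=c(I)$; by uniqueness it equals $\exp(b_0,tU)$, which in particular shows $\exp(b_0,\cdot\,U)$ is defined on $I'$. Evaluating at $t=c(s)$ gives $\rho(s)=\exp(b_0,c(s)U)$, hence $\beta(s)=\exp(b_0,c(s)U)p(s)$, which is the desired identity.

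I expect the main obstacle to be the bookkeeping in the Leibniz computation of $\om(\rho'(s))$: correctly separating the $\Ad$-twisted derivative of the base curve from the fundamental-vector-field term, and keeping the logarithmic-derivative conventions and signs consistent with the group-level identity derived from the hypothesis. The second delicate point is the domain-of-definition claim, which is exactly why $\rho$ should be assembled from already-defined objects and $\exp(b_0,c(s)U)$ recovered from it, rather than assuming the latter exists in advance.
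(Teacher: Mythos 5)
Your proof is correct, and it fills in exactly the argument that the paper itself omits --- the proposition is imported from \cite{fm.nilpconf} without proof, and the standard proof there proceeds the same way: since $\om$ trivializes $TB$, a curve in $B$ is determined by its $\om$-derivative and initial point, so one shows $s\mapsto \varphi\exp(b_0,sU)p(s)^{-1}$ is the $c$-reparametrized flow line of $\tilde U$ through $b_0$, the differentiated group identity supplying precisely the relation $\Ad(p(s))U-p'(s)p(s)^{-1}=c'(s)U$ that your Leibniz computation requires. Your device of assembling the curve $\rho$ from already-defined data and only then recovering $\exp(b_0,c(s)U)$ by uniqueness of integral curves also correctly handles the domain-of-definition claim, which is the one genuinely delicate point.
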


\begin{remark}
The proposition says that in normal coordinates for
$(M,B,\omega)$ centered at the fixed point $\pi(b_0)$, the
automorphism $\varphi$ resembles the model automorphism $g$ acting on
$G/P$ with fixed point $o = [P]$. The proposition is related to the \emph{comparison maps} studied in the recent
paper \cite{cgh.holonomy}.
\end{remark}

Before stating our first general results on higher order fixed points, we introduce some terminology.

\begin{definition}  Let $\eta \in \alginf(M)$.
\begin{itemize} 
\item The \emph{strongly fixed set} of a given geometric type of the flow generated by $\eta$ is the set of all
higher order fixed points of that type. For a higher order fixed point $x_0$, the term strongly fixed
set will mean the strongly fixed set of the type of $x_0$. 

\item Given a neighborhood $U$ of a higher order fixed point $x_0$ the
\emph{strongly fixed component} of $x_0$ in $U$ is the set of all
points that can be reached from $x_0$ by a smooth curve contained in
the intersection of $U$ with the strongly fixed set of $x_0$. The
higher order fixed point $x_0$ is called \emph{smoothly isolated}
if $\{ x_0 \} $ equals the strongly fixed component in some neighborhood.
\end{itemize}
\end{definition}

For any $x_0\in M$ and any choice of $b\in\pi^{-1}(x_0)$, the Cartan
connection $\omega$ gives identifications $T^*_{x_0}M\cong \frak p_+$
and $T_{x_0}M\cong\lieg/\liep$ such that the duality between the two
spaces is induced by the Killing form of $\lieg$.  Then any $\alpha\in
T^*_{x_0}M$ corresponds to an element $Z\in\frak p_+$. Put
\begin{gather*}
F_\lieg(Z)=\{X\in\lieg:\ad_X^k(Z)\in\liep\quad\forall k\in \BN \}\\ 
C_\lieg(Z)=\{X\in\lieg : [X,Z]=0\}\subset F_\lieg(Z)
\end{gather*}
A different choice $b'\in \pi^{-1}(x_0)$ is of the form $b'=b\cdot g$
for some $g \in P$, which leads to $Z'=\Ad(g^{-1})(Z)$;
 $F_\lieg(Z')=\Ad(g^{-1})(F_\lieg(Z))$;  $C_\lieg(Z')=\Ad(g^{-1})(C_\lieg(Z))$. By
point 2(b) in definition \ref{def:cartan.geometry} the images of these
subsets in $\lieg/\liep$ determine subsets $C(\alpha)\subset
F(\alpha)\subset T_{x_0}M$ which are independent of the choice of
$b$. Of course, $C(\alpha) \subset T_{x_0}M$ is a linear subspace.

\begin{definition}\label{def:commutant}
The subset $F(\al)\subset T_{x_0}M$ and the subspace $C(\al)\subset
T_{x_0}M$ determined by $\al\in T^*_{x_0}M$ as above are called,
respectively, the \emph{normalizing subset} and the \emph{commutant}
of $\al$ in $T_{x_0}M$.
\end{definition}

The following is a consequence of proposition
\ref{prop.basic.holonomy} for the local behavior of an infinitesimal
automorphism around a higher order fixed point.

\begin{proposition}\label{prop.attractor}
  Consider a Cartan geometry $(M,B,\om)$ modeled on $(\lieg,P)$ and
  $\eta\in \alginf(M)$ with higher order fixed point $x_0\in M$
  with isotropy $\al\in T^*_{x_0}(M)$. 

\begin{enumerate}
\item For any $\xi\in F(\al)\subset T_{x_0}M$, there is an exponential
  curve emanating from $x_0$ in the direction $\xi$ consisting of
  fixed points for $\eta$.  If $\xi\in C(\al)\subset F(\al)$, then
  this curve lies in the strongly fixed set of $x_0$.
  
\item If $k = \dim C(\al)$, then there is a $k$--dimensional
  submanifold $N\subset M$ through $x_0$ contained in the strongly
  fixed set of $x_0$, with $T_{x_0}N=C(\al)$.
  
\item For some neighborhood $V$ of $x_0$ any point in the strongly
  fixed component of $x_0$ in $V$ can be reached by an exponential
  curve emanating from $x_0$ in a direction belonging to
  $C(\alpha)$. In particular, if $C(\al)=\{0\}$, then $x_0$ is
  smoothly isolated. 
\end{enumerate}
\end{proposition}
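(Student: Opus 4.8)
The plan is to leverage Proposition~\ref{prop.basic.holonomy} to translate the local dynamics of the flow generated by $\eta$ near $x_0$ into the algebraic behavior of the adjoint action on $\lieg$. The key observation is that a higher order fixed point with isotropy $\al$ corresponds, after choosing $b_0 \in \pi^{-1}(x_0)$, to an element $Z \in \frak p_+$ with $\om_{b_0}(\tilde\eta) = Z$. The flow $\varphi^t_{\tilde\eta}$ then satisfies $\varphi^t_{\tilde\eta} b_0 = b_0 \, p_t$ where $p_t = \exp(tZ) \in P_+$, so the isotropy $g = g_t = \exp(tZ)$ is the relevant group element for each time $t$. I would apply Proposition~\ref{prop.basic.holonomy} with $U = \xi \in F_\lieg(Z)$ to compute the action of $\varphi^t_{\tilde\eta}$ on the exponential curve $s \mapsto \exp(b_0, s\xi)$.

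First I would prove part (1). For $\xi \in F_\lieg(Z)$, I need to understand the product $\exp(tZ) \exp(s\xi)$ in $G$ and show it can be written in the form $\exp(c(s)\xi)\, p(s)$ with $c$ a diffeomorphism fixing $0$ and $p(s) \in P$. The defining condition $\ad_\xi^k(Z) \in \liep$ for all $k$ is precisely what controls this: expanding via the Baker–Campbell–Hausdorff formula or, more cleanly, by differentiating the curve $s \mapsto \exp(-s\xi)\exp(tZ)\exp(s\xi)$ and using that its logarithmic derivative lies in $\liep$, one shows $\exp(tZ)$ conjugates the one-parameter subgroup $\exp(s\xi)$ into itself modulo $P$. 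Since $\xi \in F_\lieg(Z)$, the conjugate $\Ad(\exp(s\xi))(Z)$ stays in $\liep$; this forces the holonomy equation to have $c(s) = s$ (the direction is genuinely preserved, not merely reparametrized), so by Proposition~\ref{prop.basic.holonomy} the entire exponential curve consists of fixed points of $\varphi^t_{\tilde\eta}$, hence of $\eta$. When $\xi \in C_\lieg(Z)$, commutativity $[\xi, Z] = 0$ gives $\Ad(\exp(s\xi))(Z) = Z$ exactly, so the isotropy along the whole curve is again $\exp(tZ)$ of the \emph{same} geometric type, placing the curve in the strongly fixed set.

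For part (2), I would exploit that $C_\lieg(Z)$ is a linear subspace and that $C(\al) \subset T_{x_0}M$ is its image under the identification $\lieg/\liep \cong T_{x_0}M$. The submanifold $N$ is built as the image under the normal coordinate chart $\pi \o \exp_{b_0}$ of a small neighborhood of $0$ in $C(\al)$; that this map restricts to an immersion follows because $\exp_{b_0}$ is a diffeomorphism near $0$ and $C(\al)$ injects into $\lieg/\liep$. Each straight line through $0$ in $C(\al)$ maps to a distinguished curve in the strongly fixed set by part (1), so $N$ lies in the strongly fixed set and has the correct tangent space. For part (3), the reverse inclusion, I would argue that any smooth curve starting at $x_0$ and lying in the strongly fixed set must have initial direction in $C(\al)$: a point strongly fixed of the \emph{same} type means its isotropy is $P$-conjugate to $Z$, and differentiating the ``same type'' condition at $x_0$ recovers the commutant constraint $[\xi, Z] = 0$. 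Working in a normal chart and using that the strongly fixed component is reached by curves within the strongly fixed set, one concludes it is contained in the image of $C(\al)$; combined with part (2) this identifies the strongly fixed component, and $C(\al) = \{0\}$ immediately yields smooth isolation.

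\textbf{The main obstacle} I anticipate is part (3), specifically the claim that the initial direction of any strongly-fixed curve must lie in $C(\al)$ rather than merely in $F(\al)$. The set $F(\al)$ already produces curves of fixed points, but these need not preserve the geometric \emph{type} of the isotropy; the delicate point is extracting, from the first-order condition that a nearby fixed point has the \emph{same} type, the exact Lie-bracket vanishing $[\xi,Z]=0$ that defines the commutant. This requires analyzing how the isotropy varies to first order as one moves along a curve of fixed points and verifying that staying within a single $P$-orbit to first order is equivalent to the tangent direction centralizing $Z$ modulo $\liep$—and then checking this is genuinely the subspace $C(\al)$ and not something larger. Carefully relating the infinitesimal $P$-orbit condition to the centralizer, while accounting for the ambiguity in the choice of $b_0$, is where the argument demands the most care.
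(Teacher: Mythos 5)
Your parts (1) and (2) follow essentially the paper's route: the identity $e^{tZ}e^{sX}=e^{sX}e^{t\Ad(e^{-sX})(Z)}$ combined with proposition \ref{prop.basic.holonomy} gives fixed points when $\Ad(e^{-sX})(Z)\in\liep$, and exact preservation of the isotropy when $[X,Z]=0$. One slip in (2): you build $N$ as the \emph{normal-chart} image of a neighborhood of $0$ in $C(\al)$, but the normal chart uses representatives in $\lieg_-$, and an $\lieg_-$-representative of a class in $C(\al)$ need not centralize $Z$ --- for general gradings $C_{\lieg_-}(Z)$ can have strictly smaller image in $\lieg/\liep$ than $C_\lieg(Z)$ does (this is exactly what the paper's ``Simplifications'' subsection is about). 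The correct construction, as in the paper, chooses a subspace $C\subset C_\lieg(Z)$ with $C\cap\liep=0$, extends it to a complement of $\liep$ in $\lieg$, and restricts $\pi\o\exp_{b_0}$; then part (1) applies verbatim to each ray of $C$.

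The genuine gap is in part (3), precisely where you flag your main obstacle. There are two problems. First, your first-order claim is asserted but not established: differentiating the condition that the isotropy stays in the $P$-orbit of $Z$ requires the infinitesimal automorphism equation --- along a lift $\tilde\beta$ one has $\tfrac{d}{dr}\,\om(\tilde\eta)=[\om(\tilde\eta),\om(\tilde\beta')]-K(\tilde\eta,\tilde\beta')$, where the curvature term vanishes only because $\tilde\eta$ is vertical along the lift (all points in question are zeros of $\eta$) --- and tangency to the orbit then yields only $\om(\tilde\beta'(r))\in\liep+C_\lieg(Y_r)$, i.e.\ velocity in the commutant of the isotropy \emph{at each point}, a subspace that varies along the curve. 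Second, and more seriously, even granted this pointwise tangency, it does not follow that the strongly fixed component lies in $\pi(\exp_{b_0}(C_\lieg(Z)))$: a curve everywhere tangent to a varying family of subspaces need not stay in the ``leaf'' through $x_0$ without an integrability argument you do not supply, and statement (3) asserts more than tangency --- each nearby strongly fixed point must actually lie on an exponential curve from $x_0$ with direction in $C(\al)$. The paper obtains this \emph{exactly}, not infinitesimally: since the isotropies $Y_r$ along the curve are all $P$-conjugate to $Z$, a smooth local lift of the resulting path in $P/C_P(Z)$ back to $P$ produces a lift $\tilde\gamma$ with $\om_{\tilde\gamma(r)}(\tilde\eta)\equiv Z$, whence $\varphi^t_{\tilde\eta}\tilde\gamma(r)=\tilde\gamma(r)e^{tZ}$; writing $\tilde\gamma(r)=\exp_{b_0}(X)$ and comparing with the equivariance $\exp(b_0e^{tZ},X)=\exp(b_0,\Ad(e^{tZ})X)\,e^{tZ}$ forces $\Ad(e^{tZ})(X)=X$, hence $[Z,X]=0$ on the nose. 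This converts the ``same geometric type along the curve'' hypothesis into an exact invariance of the actual exponential coordinate $X$ of each point, which is what (3) requires. To close your argument you would need either to reproduce this holonomy-plus-equivariance mechanism or to prove an integrability statement for the varying commutants; as written, the passage from the infinitesimal condition to the global conclusion fails.
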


\begin{Pf}
Choose $b_0\in\pi^{-1}(x_0)$ and set $Z=\om_{b_0}(\tilde\eta)\in\frak
p_+$. Let $U$ be a neighborhood of $0$ in $\lieg$ on which the
restriction of $\exp_{b_0}$ is defined and is a diffeomorphism onto
its image.  For $\xi\in F(\al)$, there is an element $X\in F_\lieg(Z)$
such that $D_{b_0}\pi (\om^{-1}(X))=\xi$.  Let $\gamma(r) =
\pi(\exp_{b_0}(rX))$, defined for small $r$.

In $G$ we have $e^{tZ}e^{rX}=e^{rX}e^{t\Ad(e^{-rX})(Z)}$ and 
$$
\Ad(e^{-rX})(Z)=\sum_{k=0}^\infty\frac{(-r)^k}{k!}\ad_X^k(Z),
$$ 
which lies in $\frak p$ since $X\in F_\lieg(Z)$.  If $\xi\in
C(\al)$, then $X$ can be chosen in $C_\lieg(Z)$, and then 
$\Ad(e^{-rX})(Z)=Z$.  
It follows from proposition \ref{prop.basic.holonomy} that in $B$, for
all sufficiently small $t$, $X\in V$, and $r\leq 1$,
$$ 
\varphi^t_{\tilde{\eta}} \exp(b_0,rX) = \exp(b_0,rX)e^{t\Ad(e^{-rX})(Z)}
$$ 

Now (1) follows for $F(\alpha)$ because the rightmost term is in $P$. For $\xi \in C(\alpha)$, differentiate
with respect to $t$ at time 0 to see that $\tilde\eta(\exp(b_0,rX))$
coincides with the fundamental vector field generated by $Z$ for all $r$.

For point (2), note that we can choose a $k$--dimensional
subspace $C\subset C_\lieg(Z)$ with $C \cap \frak p = 0$. Extend $C$
to a linear subspace of $\lieg$ which is complementary to
$\liep$. Then $\pi\o\exp_{b_0}$ can be restricted to an open
neighborhood of zero to obtain a submanifold chart as required.

For (3), let $\gamma(r)$ be a smooth curve emanating from $x_0$ which lies
in the strongly fixed set of $x_0$. Then there is a lift
$\tilde{\beta}$ of $\gamma$, which we may assume begins at $b_0$,
satisfying
$$ \tilde{\eta}_{\tilde{\beta}(r)} = (\tilde{Y}_r)_{\tilde{\beta}(r)},$$
 
for some $Y_r \in \liep$ with corresponding fundamental vector field
$\tilde{Y}_r$; moreover, each $Y_r$ is conjugate in $P$ to $Z$, and
$Y_0 = Z$.  Now the $P$--conjugacy class of $Z$ is in bijection with $P /
C_P(Z)$, where $C_P(Z)$ is the stabilizer in $P$ of $Z$ under the
adjoint action.  A smooth path in this quotient can be lifted to
$P$. Hence on a sufficiently small interval, we obtain a smooth path
$c_r$ through the identity in $P$ such that $\Ad(c_r)(Y_r) = Z$.
Then let $\tilde{\gamma} = \tilde{\beta} c_r^{-1}$.  For this lift,

$$ \tilde{\eta}_{\tilde{\gamma}(r)} = (\widetilde{\Ad(c_r)(Y_r}))_{\tilde{\beta}(r)} = \tilde{Z}_{\tilde{\beta}(r)}$$

Therefore $\varphi^t_{\tilde{\eta}} \tilde{\gamma}(r) =
\tilde{\gamma}(r) e^{tZ}$ for sufficiently small $r$. Fix $r$ for
which $\tilde{\gamma}(r) \in \exp_{b_0}(U)$, so there is $X \in U$
such that $\exp_{b_0}(X) = \tilde{\gamma}(r)$.  Then
$$ \varphi^t_{\tilde{\eta}} \exp_{b_0}(X) = \exp_{b_0}(X) e^{tZ} $$
for all $t$.  On the other hand, the expression above also equals
$$ \exp_{b_0 e^{tZ}} (X) = \exp_{b_0}(\Ad(e^{tZ})(X)) e^{tZ}$$

Therefore $\exp_{b_0}(X) = \exp_{b_0}(\Ad(e^{tZ})(X))$, so for any
$t$ sufficiently small that $\Ad(e^{tZ})(X) \in U$, we have $\Ad(e^{tZ})(X) = X$, and thus $[Z,X] = 0$, which completes the proof of
(3).
\end{Pf}

\begin{remark} 
In most cases we discuss, we can strengthen this result
by showing that the curves in (1) are distinguished curves for the
geometry and describing the submanifold in (2) in terms of normal
coordinates. 
%Moreover, in these cases we will be able to obtain a better description of the strongly fixed sets. 
These improvements will be presented
at the end of section \ref{sec.general}.
\end{remark}

To proceed further, we need an analog of the concept of holonomy
sequences associated to sequences of automorphisms of a Cartan
geometry, which appeared in \cite{frances.lfrank1}.  The notion was
further developed in later papers, including \cite{bfm.zimemb},
\cite{fm.nilpconf}, \cite{frances.riemannien}, and
\cite{frances.degenerescence}.  The following definition for flows is
the most useful variation for our purposes.

\begin{definition}\label{def.holonomy}
  Let $\{ \varphi^t \}$ be a flow by automorphisms of $(M,B,\omega)$, and
  let $b \in B$.  A path $p(t) \in P$ is a \emph{holonomy path at $b$
    with attractor $b_0 \in B$} for $\{ \varphi^t \}$ if there exists a path
  $b(t)$ and a point $b_0$ in $B$ with
$$ b(t) \rightarrow b \qquad \mbox{and} \qquad \varphi^t b(t)
 p(t)^{-1} \rightarrow b_0$$ as $t \rightarrow \infty$.
\end{definition}

The following proposition provides a condition under which a holonomy
path at one point can be propagated to nearby points.

\begin{proposition}
\label{prop.ext.holonomy}
Let $\{ \varphi^t \}$ be a flow by automorphisms of $(M,B,\omega)$.  Let
$p(t)$ be a holonomy path at $b$ with attractor $b_0$ and path $b(t)$ as in definition \ref{def.holonomy}.  Suppose that
for some $Y\in\lieg$, $\exp(b(t),Y)$ is defined for all $t$ and $\Ad
(p(t))(Y) \rightarrow Y_\infty$ as $t \rightarrow \infty$.  Then $p(t)$
is a holonomy path at $\exp(b,Y)$ with attractor $\exp(b_0,Y_\infty)$.
\end{proposition}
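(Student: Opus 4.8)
The plan is to use $\exp(b(t),Y)$ itself as the path witnessing the holonomy statement at $\exp(b,Y)$. I would set $\tilde b(t)=\exp(b(t),Y)$, which is defined for all $t$ by hypothesis, and verify the two conditions of definition \ref{def.holonomy}. The first condition, $\tilde b(t)\to\exp(b,Y)$, is immediate from $b(t)\to b$ and the joint smoothness of $(c,A)\mapsto\exp(c,A)$ on its domain. All the content lies in showing $\varphi^t\tilde b(t)\,p(t)^{-1}\to\exp(b_0,Y_\infty)$.

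For the second condition I would combine two structural properties of the exponential. First, an automorphism preserves $\omega$, hence carries each $\omega$--constant field $\tilde A$ to itself and commutes with its flow, so $\varphi^t\exp(c,A)=\exp(\varphi^tc,A)$ wherever both sides are defined. Second, the equivariance axiom 2(b) of definition \ref{def:cartan.geometry} gives $\exp(c\cdot q,A)=\exp(c,\Ad(q)A)\cdot q$ --- precisely the identity already exploited in the proof of proposition \ref{prop.attractor}. Applying the first property to move $\varphi^t$ through $\exp$ and then the second with $q=p(t)$ to absorb $p(t)^{-1}$, I obtain the key identity
$$\varphi^t\tilde b(t)\,p(t)^{-1}=\exp(\varphi^tb(t),Y)\,p(t)^{-1}=\exp\!\bigl(\varphi^tb(t)p(t)^{-1},\ \Ad(p(t))Y\bigr).$$
Letting $t\to\infty$, the base point $\varphi^tb(t)p(t)^{-1}$ converges to $b_0$ and the Lie--algebra argument $\Ad(p(t))Y$ converges to $Y_\infty$ by hypothesis, so continuity of $\exp$ forces the right-hand side to $\exp(b_0,Y_\infty)$. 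Together with $\tilde b(t)\to\exp(b,Y)$, this is exactly the assertion that $p(t)$ is a holonomy path at $\exp(b,Y)$ with attractor $\exp(b_0,Y_\infty)$.

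The only step requiring real care --- and the main obstacle --- is keeping every expression inside the open domain of $\exp$ (and of the flow $\varphi^t$). Reading the definedness of $\exp(b_0,Y_\infty)$ as implicit in the conclusion, the pair $(b_0,Y_\infty)$ lies in that open domain; hence the pairs $(\varphi^tb(t)p(t)^{-1},\Ad(p(t))Y)$ converging to it also lie in the domain for all large $t$, which both justifies the final continuity step and, via the equivariance of definition \ref{def:cartan.geometry}(2b), shows $\exp(\varphi^tb(t),Y)$ is defined. That $\varphi^t$ is defined at $\tilde b(t)$, so that the automorphism property may be applied, follows from the flow property together with the hypothesis that $\exp(b(t),Y)$ is defined for all $t$. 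I would settle these domain points first and then present the displayed identity as the heart of the argument.
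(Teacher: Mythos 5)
Your proof is correct and takes essentially the same route as the paper: the key identity $\varphi^t\exp(b(t),Y)\,p(t)^{-1}=\exp\bigl(\varphi^t b(t)p(t)^{-1},\Ad(p(t))Y\bigr)$, obtained by commuting the automorphism through the exponential and absorbing $p(t)^{-1}$ via the $P$--equivariance of $\omega$, is exactly the paper's computation with $k(t)=\varphi^t b(t)p(t)^{-1}\to b_0$. Your additional care with domains of definition (including reading definedness of $\exp(b_0,Y_\infty)$ as implicit in the conclusion) is sound, though the paper's terser proof passes over these points silently.
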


\begin{Pf}
We have
$$\varphi^t b(t) p(t)^{-1} =k(t) \rightarrow b_0$$

Then $\exp(b(t),Y) \rightarrow \exp(b,Y)$, and
\begin{eqnarray*}
  \varphi^t \exp(b(t),Y) p(t)^{-1} & = & \exp(\varphi^t b(t),Y) p(t)^{-1} \\
  & = & \exp(k(t) p(t),Y) p(t)^{-1} \\
  & = & \exp(k(t), \Ad(p(t))Y)  \rightarrow \exp(b_0,Y_\infty)
\end{eqnarray*}
\end{Pf}

A holonomy path $p(t)$ at $b$ leads to restrictions on the possible
$\ph^t$--invariant sections of any bundle associated to the Cartan
bundle.  Given a representation $\Bbb W$ of $P$, a principal bundle
automorphism $\ph$ of $B$ gives rise to an automorphism of the
associated bundle $B\x_P\Bbb W$. If $\ph$ also preserves $\om$, and the
associated bundle is a tensor bundle, then this automorphism is the
one functorially associated to $\ph$.
%The bundle automorphism on
%$B\x_P\Bbb W$ in turn induces a pullback operator on smooth sections
%of this bundle, so the concept of $\ph^t$--invariant sections makes
%sense in this setting. 
Smooth sections of
$B\x_P\Bbb W$ correspond to smooth $P$-equivariant maps $f:B\to\Bbb W$---that is, $f(b g)=g^{-1} f(b)$ for
any $g\in P$. 
%In the left hand side of this equation, we use the
%principal right action of $P$ on $B$, while in the right hand side we
%use the given representation of $P$ on $\Bbb W$. 
The pullback of a section by $\ph$ corresponds to the precomposition
of $f:B\to\Bbb W$ with $\ph$. In particular, if $f$ corresponds to a
$\ph^t$--invariant section for a flow, then $f\o\ph^t=f$ for all $t$.

The Cartan curvature $\ka$, and components of the harmonic curvature
for parabolic geometries, are both invariant under automorphisms
because the corresponding sections are constructed naturally from the
Cartan connection. For the Cartan curvature, the representation $\Bbb
W$ is $\La^2(\frak g/\frak p)^*\otimes\frak g$, which in general is
rather complicated; for the components of the harmonic curvature, in
contrast, the representation is always irreducible.  The number of harmonic curvature components and the form of the corresponding representations
varies according to the type of geometry in question.
% To formulate our basic result, we need one mor concept: Recall that
% for functions $f$ and $g$ defined on $\BR^+$, the notation $f(t) \in
% \Theta(g(t))$ means there exists nonzero constants $c, C \in \BR$ such
% that
% $$
% cg(t) \leq f(t) \leq C g(t) \qquad \mbox{for all} \qquad t \geq 0.  
% $$

\begin{proposition}
\label{prop.curv.stab}
Let $p(t)$ be a holonomy path for $\{ \varphi^t \}$ corresponding to
$b(t)\to b$ with attractor $b_0$, and let $f:B\to\Bbb W$ be the
equivariant function corresponding to a $\ph^t$--invariant section of
$B\x_P\Bbb W$. Put $k(t)=\varphi^t(b(t)) p(t)^{-1}$ as in
definition \ref{def.holonomy}.
\begin{enumerate}
\item Then $p(t)\cdot f(b(t))\to f(b_0)$ as $t\to\infty$.

\item Assume moreover that $p(t)$ is contained in a $1$--parameter subgroup of
$P$ that is diagonalizable on $\Bbb W$, and let $\Bbb W = \Bbb W_0
\oplus \cdots \oplus \Bbb W_\ell$ be an eigenspace decomposition with
eigenvalues given by functions $\lambda_i(t)$, $i = 1, \ldots, \ell$.  Let $f(b)_i$ be
the component of $f(b)$ in $\Bbb W_i$.
\begin{itemize}
\item If $\lambda_i(t) \rightarrow \pm \infty$ as $t \rightarrow \infty$,
  then $f(b)_i = 0$.
\item 
If $f(b)_i \neq 0$, then $||f(k(t))_i || \in
\Theta(\lambda_i(t))$, where $|| \cdot ||$ is any norm on $\Bbb{W}$;
in particular, if $f(b_0)_i=0$ but $\lambda_i(t)$ does not tend to $0$
as $t\to\infty$, then $f(b)_i = 0$.
\end{itemize}
\end{enumerate}
\end{proposition}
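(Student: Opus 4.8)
The plan is to extract everything from a single identity relating the values of $f$ along $b(t)$ and along $k(t)$, using the two defining properties of $f$: its $P$-equivariance $f(bg)=g^{-1}\cdot f(b)$ and its invariance under the flow, $f\o\varphi^t=f$. First I would rewrite the attractor relation $k(t)=\varphi^t(b(t))p(t)^{-1}$ as $\varphi^t(b(t))=k(t)p(t)$. Applying $f$ and using flow-invariance on the left, then equivariance on the right, gives $f(b(t))=f(\varphi^t(b(t)))=f(k(t)p(t))=p(t)^{-1}\cdot f(k(t))$, hence the key identity $f(k(t))=p(t)\cdot f(b(t))$. Since $f$ is continuous and $k(t)\to b_0$, the left-hand side converges to $f(b_0)$, which is exactly the assertion $p(t)\cdot f(b(t))\to f(b_0)$ of part (1).

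For part (2) I would decompose this identity along the eigenspace decomposition $\Bbb W=\Bbb W_0\oplus\cdots\oplus\Bbb W_\ell$. Because $p(t)$ lies in a one-parameter subgroup diagonalizing $\Bbb W$, its action preserves each $\Bbb W_i$ and scales it by $\lambda_i(t)$, so the $\Bbb W_i$-component of the identity reads $f(k(t))_i=\lambda_i(t)f(b(t))_i$, and part (1) yields $\lambda_i(t)f(b(t))_i\to f(b_0)_i$. Since $b(t)\to b$ and $f$ is continuous, $f(b(t))_i\to f(b)_i$. For the first bullet, suppose $f(b)_i\neq 0$; then $\|f(b(t))_i\|$ is bounded below by a positive constant for large $t$, so $|\lambda_i(t)|\to\infty$ would force $\|\lambda_i(t)f(b(t))_i\|\to\infty$, contradicting convergence to the finite limit $f(b_0)_i$. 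Hence $f(b)_i=0$, proving the first bullet.

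For the $\Theta$-estimate, still assuming $f(b)_i\neq 0$, the quantity $\|f(b(t))_i\|$ converges to $\|f(b)_i\|\neq 0$ and is therefore bounded above and below by positive constants for large $t$; substituting into $\|f(k(t))_i\|=|\lambda_i(t)|\,\|f(b(t))_i\|$ gives $\|f(k(t))_i\|\in\Theta(\lambda_i(t))$. The final "in particular" is then a contrapositive: if $f(b)_i\neq 0$, the $\Theta$-estimate combined with $f(k(t))_i\to f(b_0)_i=0$ forces $|\lambda_i(t)|\to 0$; so if instead $f(b_0)_i=0$ while $\lambda_i(t)$ does not converge to $0$, we must have $f(b)_i=0$. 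I expect no deep obstacle in this argument: everything rests on the single identity $f(k(t))=p(t)\cdot f(b(t))$, and the only genuine care required is bookkeeping---keeping the direction of the $P$-action consistent with the equivariance convention and correctly combining flow-invariance with equivariance---after which the conclusions follow from elementary continuity and the asymptotics of the scalars $\lambda_i(t)$.
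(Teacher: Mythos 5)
Your proposal is correct and follows essentially the same route as the paper: both rest on combining flow-invariance $f\o\ph^t=f$ with $P$-equivariance to obtain the identity $f(k(t))=p(t)\cdot f(b(t))$, after which part (1) is continuity of $f$ along $k(t)\to b_0$, and part (2) is the componentwise relation $f(k(t))_i=\lambda_i(t)f(b(t))_i$ together with $f(b(t))_i\to f(b)_i$. Your write-up merely spells out the elementary limit bookkeeping (the contradiction for $|\lambda_i(t)|\to\infty$, the two-sided bound giving the $\Theta$-estimate, and the contrapositive for the final claim) that the paper leaves implicit.
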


Recall that for functions $f$ and $g$ defined on $\BR^+$, the notation $f(t) \in \Theta(g(t))$ means there exist nonzero constants $c, C \in \BR^+$ such that 
$$cg(t) \leq f(t) \leq C g(t) \qquad \mbox{for all} \qquad t \geq 0$$

\begin{Pf}
 
%By assumption $\gamma(t)\to b_0$ as $t\to\infty$, so
%  $f(\gamma(t))\to f(b_0)$ for $t\to\infty$. 
  For (1), we see that the $P$-equivariance of $f$ implies
  $$f(k(t))=p(t)\cdot f(\ph^t(b(t))),$$ and $\ph^t$-invariance implies
  $f(\ph^t(b(t)))=f(b(t))$; now (1) follows.

For (2) we have by (1) that
\begin{eqnarray*}
% \label{eqn.klambda}
\lambda_i(t)f(b(t))_i\to f(b_0)_i \quad \text{ for } t\to\infty  
\end{eqnarray*}
Since $f(b(t))\to f(b)$ for $t\to\infty$, the first property follows
immediately.  For the second, also use that $f(k(t))_i = \lambda_i(t) f(b(t))_i$.  
% 
% From (1), we also get $\la_i(t)^{-1}f(\gamma(t))^i=f(b(t))^i$. If
% $f(b)_i \neq 0$, then $\lambda_i(t)^{-1} || f(\gamma(t))^i ||
% \rightarrow || f(b)_i|| \neq 0$, which proves the last point.
\end{Pf}

% COULD MAKE A REMARK HERE ABOUT IMPLICATIONS FOR INVARIANT TENSORS ON
% M, LIKE WEYL CURVATURE FOR CONFORMAL STRUCTURES, ETC.  COMPARE WITH
% LEMMA 5.7 OF \cite{nagano.ochiai.proj} and PROP 5.2 OF
% \cite{fm.nilpconf} (show vanishing of Weyl or of certain
% components).

\begin{definition}
\label{def.p.stable.subspace}
Let $p(t)$ be a holonomy path at $b$ and $\Bbb W$ a $P$-representation
satisfying the hypotheses of part (2) of proposition
\ref{prop.curv.stab} above.  Let $\Bbb W_i$ and $\lambda_i(t)$, $i =
0, \ldots, l$, be as above.
\begin{itemize}
\item The \emph{stable subspace} for $p$, denoted $\Bbb W_{st}(p)$, is
  the sum of the eigenspaces $\Bbb W_i$ for which $\lambda_i(t)$ is
  bounded as $t \rightarrow \infty$.

\item The \emph{strongly stable subspace} for $p$, denoted $\Bbb
  W_{ss}(p)$, is the sum of eigenspaces $\Bbb W_i$ with $\lambda_i(t)
  \rightarrow 0$ as $t \rightarrow \infty$.
\end{itemize}
\end{definition}

Part (2) of proposition \ref{prop.curv.stab} says that we must
always have $f(b)\in\Bbb W_{st}(p)$, and if $f(b_0)=0$, then also
$f(b)\in\Bbb W_{ss}(p)$.

A crucial fact for the sequel is that in the case of parabolic
geometries, nice holonomy paths can be obtained from purely algebraic
data.  Recall that an \emph{$\frak{sl}_2$--triple} in a Lie algebra
$\frak a$ is formed by elements $E,H,F\in\frak a$ such that $[E,F]=H$,
$[H,E]=2E$, and $[H,F]=-2F$.  The Jacobson--Morozov theorem says that any element $E$ in the semisimple Lie algebra
$\frak g$ which is nilpotent in the adjoint representation can be
completed to an $\frak{sl}_2$--triple: there exists $F \in \lieg$ such
that $E$, $[E,F]$, and $F$ form an $\frak{sl}_2$--triple.

Let $x_0\in M$ and $\al\in T^*_{x_0}M$.  As above, a choice of $b_0
\in\pi^{-1}(x_0)$ associates to $\al$ an element $Z\in\frak p_+$,
which is nilpotent. We define $T_\lieg(Z)$ to be the non--empty set of
all $X\in\lieg$, such that $Z$, $[Z,X]$, and $X$ form an
$\frak{sl}_2$--triple. As for the normalizing set and the commutant,
we obtain a subset $T(\al)\subset T_{x_0}M$ which is independent of
the choice of $b$. 

\begin{definition}
For $\alpha \in T_{x_0}^*M$, the non--empty subset $T(\al)\subset
T_{x_0}M$ defined above is called the \emph{counterpart set} of $\alpha$.
% $$T(\alpha) = \{ \xi \in T_{x_0}M \ : \ \xi = \pi_* \omega_b^{-1}(X)
% \ \mbox{for $X$ completing $Z$ to an $\frak{sl}_2$-triple}. \}
% $$

% The \emph{homogeneous counterpart set} of $\alpha$ is $$ T_0(\alpha)
% = \{ \xi \in T(\alpha) \ : \ X \in \lieg_- \ \mbox{and} \ [Z,X] \in
% \lieg_0 \ \mbox{for some} \ b \in \pi^{-1}(x_0) \}$$
\end{definition}

%  As before,
%this mens that we obtain a well defined subset $T(\al)\subset
%T_{x_0}M$ consiting of tangent vectors $\xi$ such that for one (or
%equivalently any) point $b_0\in\pi^{-1}(x_0)$ there exists an element
%$X\in\frak g$ which extends the element $Z$ corresponding to $\al$ to
%a $\frak{sl}_2$--triple such that
%$T_{b_0}\pi\cdot\om(b_0)^{-1}(X)=\xi$. 

%A particularly interesting case
%is when the point $b_0$ and the element $X$ can be chosen in such a
%way that $X\in\lieg_-$ and $[Z,X]\in\lieg_0$. We will denote the
%corresponding subset in $T_{x_0}M$ by $T_0(\al)$.

Now we can precisely compute the action of $\varphi^t$ on certain curves from
$x_0$ in any direction belonging to the counterpart set of the
isotropy.
 
\begin{proposition}\label{prop.sl2.triple}
Let $(M,B,\om)$ be modeled on $(\lieg,P)$, and let $\eta\in\alginf(M)$
have a higher order fixed point at $x_0\in M$ with isotropy $\al\in
T^*_{x_0}M$.  Let $\xi\in T(\al)$, and let $Z,X\in\lieg$ be the elements associated
to $\al$ and $\xi$, respectively, for a choice of
$b_0\in\pi^{-1}(x_0)$; set $A=[Z,X]$.

Then there exists an exponential curve $\sigma:(- \epsilon,
\epsilon) \to M$, for some $\epsilon > 0$, such that $\sigma(0)=x_0$,
$\sigma'(0)=\xi$, and
$$ \ph^t_\eta(\sigma(s)) =\sigma\left( \frac{s}{1+st} \right)
\ \mbox{whenever\ }  |s| < \epsilon \ \mbox{and} \ ts \geq 0$$

Moreover, for each such $s$, there is $b\in\pi^{-1}(\sigma(s))$
such that $g(t)=e^{\log(1+st) A}$ is a holonomy path at $b$ with
attractor $b_0$. 
% If $X\in\lieg_-$, then the curve $\sigma$ is a
% distinguished curve of the geometry in question.
\end{proposition}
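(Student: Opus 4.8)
The plan is to transport an explicit $\SL_2$-computation into the Cartan geometry via Proposition \ref{prop.basic.holonomy}. First I would fix $b_0\in\pi^{-1}(x_0)$ and let $Z,X\in\lieg$ be the associated elements, so that $(Z,A,X)$ with $A=[Z,X]$ is an $\frak{sl}_2$-triple, i.e. $[Z,X]=A$, $[A,Z]=2Z$, $[A,X]=-2X$. Since $\eta$ has a higher order fixed point at $x_0$ with $\om_{b_0}(\tilde\eta)=Z$, right-invariance of $\tilde\eta$ together with equivariance of $\om$ give $\ph^t_\eta(b_0)=b_0\,e^{tZ}$, placing us exactly in the setting of Proposition \ref{prop.basic.holonomy} with $g=e^{tZ}$ and $U=X$. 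The triple relations define a Lie algebra homomorphism $\frak{sl}_2\to\lieg$, $E\mapsto Z,\ H\mapsto A,\ F\mapsto X$, and exponentiating the matrix identity
\[ \begin{pmatrix}1&t\\0&1\end{pmatrix}\begin{pmatrix}1&0\\s&1\end{pmatrix}=\begin{pmatrix}1&0\\ \tfrac{s}{1+st}&1\end{pmatrix}\begin{pmatrix}1+st&t\\0&(1+st)^{-1}\end{pmatrix} \]
(valid whenever $1+st\neq 0$) yields in $G$ the factorization $e^{tZ}e^{sX}=e^{c(s)X}p(s,t)$ with $c(s)=\tfrac{s}{1+st}$ and $p(s,t)=\exp(\log(1+st)A)\,\exp\big(\tfrac{t}{1+st}Z\big)$. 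A preliminary point to verify is $A\in\frak p$, so that $p(s,t)$ and the candidate path $g(t)$ lie in $P$; this follows from the structure of $\frak{sl}_2$-triples completing the $\frak p_+$-element $Z$.

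Next I would define $\sigma(s)=\pi(\exp(b_0,sX))$, the exponential curve with $\sigma(0)=x_0$ and $\sigma'(0)=D\pi(\om_{b_0}^{-1}(X))=\xi$, and shrink $\ep$ so that $\exp_{b_0}$ is a diffeomorphism near $0$ and $\sigma$ is an embedding on $(-\ep,\ep)$. When $ts\geq 0$ one has $1+st\geq 1$, so $c(s)=\tfrac{s}{1+st}$ lies between $0$ and $s$ and $c$ is a diffeomorphism fixing $0$; Proposition \ref{prop.basic.holonomy} then gives in $B$
\[ \ph^t_\eta\exp(b_0,sX)=\exp\!\big(b_0,\tfrac{s}{1+st}X\big)\,p(s,t), \]
and projecting to $M$ yields $\ph^t_\eta(\sigma(s))=\sigma\big(\tfrac{s}{1+st}\big)$, the first assertion.

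For the holonomy path I would fix $s$ (say $s>0$; the case $s<0$ is identical with $t\to-\infty$) and observe that the constant lift $\exp(b_0,sX)$ produces the \emph{full} holonomy path $p(s,t)$, which differs from $g(t)=\exp(\log(1+st)A)$ by the $P_+$-factor $\exp\big(\tfrac{t}{1+st}Z\big)$. This factor is bounded but does not tend to the identity, so to realize $g(t)$ as a holonomy path with attractor exactly $b_0$ I must move the approximating points inside the fiber. Setting $\nu(t)=\tfrac{-t}{1+st}$, so that $\nu(t)\to -1/s$, I would take
\[ b(t)=\exp(b_0,sX)\,e^{\nu(t)Z},\qquad b=\lim_{t\to\infty}b(t)=\exp(b_0,sX)\,e^{-Z/s}\in\pi^{-1}(\sigma(s)). \]
The transported identity $p(s,t)\,e^{\nu(t)Z}=g(t)$ — the factorization of the Borel element of $\SL_2$ — together with the fact that $\ph^t_\eta$ commutes with the right $P$-action then gives
\[ \ph^t_\eta\,b(t)\,g(t)^{-1}=\exp\!\big(b_0,\tfrac{s}{1+st}X\big)\,p(s,t)\,e^{\nu(t)Z}\,g(t)^{-1}=\exp\!\big(b_0,\tfrac{s}{1+st}X\big)\longrightarrow b_0, \]
since $\tfrac{s}{1+st}\to 0$. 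Hence $g(t)$ is a holonomy path at $b$ with attractor $b_0$.

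The main obstacle will be precisely this last step: a naive constant approximating path yields $p(s,t)$, not $g(t)$, and the discrepancy is a $P_+$-element that blows up when commuted past $\ph^t_\eta$. The resolution — choosing the lift $b=\exp(b_0,sX)e^{-Z/s}$ and the fiber-varying path $b(t)=\exp(b_0,sX)e^{\nu(t)Z}$ so that $p(s,t)e^{\nu(t)Z}=g(t)$ cancels exactly — is the real content, and rests on the elementary factorization of the upper-triangular element of $\SL_2(\BR)$. The only other points requiring care are checking $A\in\frak p$ and tracking the domains in $s$ and $t$ imposed by $1+st>0$.
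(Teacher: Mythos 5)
Your proposal is correct and takes essentially the same route as the paper's proof: the same $\SL(2,\BR)$ factorization transported through the $\frak{sl}_2$-triple via proposition \ref{prop.basic.holonomy} to obtain $\ph^t_{\tilde\eta}\exp(b_0,sX)=\exp\bigl(b_0,\tfrac{s}{1+st}X\bigr)e^{\log(1+st)A}e^{\frac{t}{1+st}Z}$, and then the identical fiber-adjusted path $b(t)=\exp(b_0,sX)e^{\frac{-t}{1+st}Z}$ with limit $b=\exp(b_0,sX)e^{-\frac1s Z}$, so that the $e^{\frac{t}{1+st}Z}$ factor cancels and $g(t)=e^{\log(1+st)A}$ emerges as the holonomy path with attractor $b_0$. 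One caveat on your aside: the claim that $A\in\liep$ ``follows from the structure of $\frak{sl}_2$-triples'' is false for arbitrary $X\in T_\lieg(Z)$ (conjugating a standard triple by $\exp$ of an element of $C_\lieg(Z)\cap\lieg_-$, e.g.\ $e_{32}$ centralizing $Z=e_{12}$ in the contact grading of $\algsl(3,\BR)$, moves $A$ out of $\liep$), though the paper leaves this point equally implicit and every later application chooses $X$ with $A\in\lieg_0$.
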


\begin{Pf}
  In $SL(2,\BR)$, compute
$$
\begin{pmatrix} 1 & t \\ 0 & 1\end{pmatrix}\begin{pmatrix}1 & 0 \\
s & 1\end{pmatrix}=\begin{pmatrix}1 & 0 \\ \frac{s}{1+st} &
1\end{pmatrix}\begin{pmatrix}1 + st & 0 \\ 0 &
\frac{1}{1+st}\end{pmatrix}\begin{pmatrix}1 & \frac{t}{1+st} \\
0 & 1 \end{pmatrix}.
$$
The $\frak{sl}_2$--triple in $\frak g$ formed by $Z$, $A$, and
$X$ gives rise to a Lie algebra homomorphism $\frak{sl}(2,\BR)\to\lieg$, which locally integrates to a group homomorphism
$SL(2,\BR)\to G$.  The above equation then shows that in $G$ 
$$
e^{tZ}e^{sX}=e^{\frac{s}{1+st} X}e^{\log(1+st) A}e^{\frac{t}{1+st} Z}.
$$
Proposition \ref{prop.basic.holonomy} shows that the local flow
$\ph^t_{\tilde{\eta}}$ of $\tilde\eta$ satisfies
\begin{equation}
  \label{eq:sl2-hol-path}
\ph^t_{\tilde{\eta}}\exp(b_0,sX)=\exp(b_0,\frac{s}{1+st} X)e^{\log(1+st)
  A}e^{\frac{t}{1+st} Z}  
\end{equation}
for sufficiently small $s$. Fixing such a value for $s$, the
path $b(t)=\exp(b_0,sX)e^{\frac{-t}{1+st} Z}$ evidently satsfies
$b(t)\to b=\exp(b_0,sX)e^{-\frac1s Z}$, and
$\pi(b)=\pi(\exp(b_0,sX))$.  Define $\sigma(s)$ to be this latter curve, valid on some nonzero interval $(- \epsilon, \epsilon)$.

%The curve $c(s)$
%is defined for sufficiently small $s$and has tangent
%$c'(s)=\pi_* \om^{-1}(c(s))^{-1}(X)$. 

% Note that $X\notin\frak p$, since in this case we would actually have $X,Z \in \liep_+$, which is nilpotent.
%since otherwise $A=[X,Z]\in\frak p_+$ and hence
%$X=-\frac12[A,X]\in\frak p_+$, and the nilpotent Lie algebra $\frak
%p_+$ cannot contain a subalgebra isomorphic to $\frak{sl}(2,\Bbb
%R)$. 
Note $\sigma'(0) = D_{b_0}\pi( \omega_{b_0}^{-1}(X)) = \xi$, and $\sigma'(s) \neq 0$ for all $|s| < \epsilon$. Finally, equation
\eqref{eq:sl2-hol-path} says that $\ph^t_{\tilde{\eta}}(b(t))\cdot e^{-\log(1+st)
  A}=\exp(b_0,\frac{s}{1+st} X)$, which converges to
$b_0$ for $ts > 0$ as $|t| \to\infty$.  
\end{Pf}

%Now we can invoke a bit more representation theory. After
%complexification, any finite dimensional representation of
%$\frak{sl}(2,BR)$ extends to $\frak{sl}(2,\Bbb C)$, and the
%standard basis of $\frak{sl}(2,BR)$ is a standard basis of
%$\frak{sl}(2,BR)$. 

Basic representation theory says that in a standard basis of
$\frak{sl}(2,\BR)$, the semisimple element $H$ acts diagonalizably
with integer eigenvalues in any finite-dimensional complex
representation.  Thus for an $\frak{sl}_2$--triple $Z$, $A$, $X$
in $\lieg$, the endomorphism $\ad(A)$ is diagonalizable on $\lieg$, so
$A\in\lieg$ is a semisimple element.  Further, $A$ acts diagonalizably
with integer eigenvalues on any finite-dimensional complex
representation of $\lieg$.  We will assume all representations are
finite-dimensional below.

If we assume in addition that $A\in\lieg_0$, then $\ad(A)(\lieg_0)\subset\lieg_0$, so $\ad(A)$ acts diagonalizably on
$\lieg_0$. Now $\lieg_0$ is reductive, so it is the
direct sum of the center plus a semisimple subalgebra. The component
of $A$ in the center acts trivially under $\mbox{ad}$, so the
semisimple component of $A$ acts diagonalizably under $\mbox{ad}$, and hence
in any finite-dimensional representation. Therefore, $A$ acts
diagonalizably in any finite-dimensional representation of $\lieg_0$
in which the center acts diagonalizably. All representations
corresponding to the components of the harmonic curvature have this
property because they are subquotients of representations of $\lieg$
in which the center of $\lieg_0$ is contained in a Cartan subalgebra.  (All real representations of interest we are aware of have
this property.)

%it as an assumption in the following statements.

\begin{definition}\label{def-gradingcomp}
  Let $A\in\lieg_0$, and let $\Bbb W$ be a representation of $\lieg_0$
  on which $A$ acts diagonalizably with eigenspace decomposition
  $\Bbb{W} = \Bbb{W}_0 \oplus \cdots \oplus \Bbb{W}_l$ and eigenvalues
  $\mu_i$, $i = 0, \ldots, l$.
  \begin{itemize}
\item The \emph{stable subspace} for $A$, denoted $\Bbb W_{st}(A)$, is
  the sum of the eigenspaces $\Bbb W_i$ for which $\mu_i \leq 0$.

\item The \emph{strongly stable subspace} for $A$, denoted $\Bbb
  W_{ss}(A)$, is the sum of eigenspaces $\Bbb W_i$ with $\mu_i < 0$.
\end{itemize}
\end{definition}

Now our results have the following useful formulation:

\begin{corollary}\label{cor:sl2-path}
In the setting of proposition \ref{prop.sl2.triple}, assume that
$A=[Z,X]\in\lieg_0$ and let $\Bbb W$ be a representation of
$\lieg_0$.   Let $I = (- \epsilon, \epsilon)$.
\begin{enumerate}
\item If $\Bbb W_{st}=\{0\}$, then any $\ph^t_\eta$--invariant section of
$B\x_P\Bbb W$ vanishes on $\sigma(I)$.

\item If $\Bbb W_{ss}=\{0\}$, then any $\ph^t_\eta$--invariant section of
$B\x_P\Bbb W$ which vanishes at $x_0$ vanishes on all of $\sigma(I)$.

\item Suppose that $\Bbb W_{ss}=\{0\}$ and that all eigenvalues of $A$
on $\lieg_-$ are non--positive with the $0$-eigenspace equal
to $C_{\lieg}(Z) \cap \lieg_-$. Then any $\ph^t_\eta$--invariant section of $B\x_P\Bbb W$
that vanishes at each fixed point of the same geometric type as $x_0$ in a neighborhood 
also vanishes on an open neighborhood of $\sigma(I \setminus \{ 0 \} )$ and thus on
an open subset containing $x_0$ in its closure.
\end{enumerate}
\end{corollary}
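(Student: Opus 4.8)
The plan is to feed the explicit holonomy path produced by Proposition \ref{prop.sl2.triple} into Proposition \ref{prop.curv.stab}, after first reconciling the two notions of stable subspace. Fix $s$ with $0<|s|<\epsilon$ and let $b\in\pi^{-1}(\sigma(s))$ be the point for which $g(t)=e^{\log(1+st)A}$ is a holonomy path with attractor $b_0$. Since $A\in\lieg_0$ acts diagonalizably on $\Bbb W$ with eigenvalue $\mu_i$ on $\Bbb W_i$, the element $g(t)$ acts on $\Bbb W_i$ by the scalar $\lambda_i(t)=(1+st)^{\mu_i}$. Along the relevant direction ($ts>0$, $|t|\to\infty$) we have $1+st\to+\infty$, so $\lambda_i(t)$ stays bounded exactly when $\mu_i\leq 0$ and tends to $0$ exactly when $\mu_i<0$. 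Comparing Definitions \ref{def.p.stable.subspace} and \ref{def-gradingcomp}, this shows $\Bbb W_{st}(g)=\Bbb W_{st}(A)$ and $\Bbb W_{ss}(g)=\Bbb W_{ss}(A)$, so the purely algebraic subspaces of Definition \ref{def-gradingcomp} govern the dynamics of the holonomy path.

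With this identification, parts (1) and (2) are immediate from Proposition \ref{prop.curv.stab}(2) applied at $b$ to the equivariant function $f$ of the invariant section. If $\Bbb W_{st}(A)=\{0\}$ then every $\lambda_i(t)\to\infty$, so the first bullet forces $f(b)_i=0$ for all $i$, hence $f$ vanishes at $\sigma(s)$; as this holds for every $s\neq 0$, continuity of the section yields vanishing on all of $\sigma(I)$, proving (1). For (2), vanishing at $x_0$ means $f(b_0)=0$, so each $f(b_0)_i=0$; since $\Bbb W_{ss}(A)=\{0\}$ no $\lambda_i(t)$ tends to $0$, and the second bullet gives $f(b)=0$, again for all such $s$.

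The substance is in part (3), where I would propagate the vanishing off the curve using Proposition \ref{prop.ext.holonomy}. Since $x_0$ is itself a fixed point of its own geometric type, the hypothesis of (3) contains that of (2), so $f$ already vanishes on $\sigma(I)$. Fix $s\neq 0$ and the holonomy path $g(t)$ at $b$, with path $b(t)\to b$ as in Proposition \ref{prop.sl2.triple}. For $Y\in\lieg_-$ small enough that $\exp(b(t),Y)$ is defined for all large $t$ (possible because the $b(t)$ remain in a compact region), decompose $Y$ into $\ad(A)$-eigenvectors; as all eigenvalues of $A$ on $\lieg_-$ are non-positive, $\Ad(g(t))(Y)=\sum_j(1+st)^{\nu_j}Y_j$ converges to the $0$-eigenspace component $Y_\infty$ of $Y$, which by hypothesis lies in $C_\lieg(Z)\cap\lieg_-$. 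Proposition \ref{prop.ext.holonomy} then makes $g(t)$ a holonomy path at $\exp(b,Y)$ with attractor $\exp(b_0,Y_\infty)$.

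The crux is the location of this attractor. Because $Y_\infty\in C_\lieg(Z)\cap\lieg_-$, the image of $Y_\infty$ in $T_{x_0}M$ lies in the commutant $C(\al)$, so by Proposition \ref{prop.attractor}(1) the point $\pi(\exp(b_0,Y_\infty))$ lies in the strongly fixed set of $x_0$ and hence is a higher order fixed point of the same geometric type; for $Y_\infty$ small it falls in the neighborhood supplied by the hypothesis, so $f(\exp(b_0,Y_\infty))=0$. Since $\Bbb W_{ss}(A)=\{0\}$ ensures no $\lambda_i(t)$ tends to $0$, the second bullet of Proposition \ref{prop.curv.stab}(2) forces $f(\exp(b,Y))=0$. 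As $Y$ ranges over a neighborhood of $0$ in $\lieg_-$ the points $\pi(\exp(b,Y))$ sweep out an open neighborhood of $\sigma(s)$ via the normal chart $\pi\o\exp_b|_{\lieg_-}$, so the section vanishes there; taking the union over $s\in I\setminus\{0\}$ gives an open set on which it vanishes, and since $\sigma(s)\to x_0$ this set has $x_0$ in its closure. I expect part (3) to be the main obstacle: the honest content is precisely that the attractor points land in the strongly fixed set, which is exactly what the assumption identifying the $0$-eigenspace of $A$ on $\lieg_-$ with $C_\lieg(Z)\cap\lieg_-$ is engineered to guarantee, together with the mild technical checks that $\exp(b(t),Y)$ admits a domain uniform in $t$ along the convergent path $b(t)$ and that $\pi\o\exp_b|_{\lieg_-}$ genuinely covers an open neighborhood of $\sigma(s)$.
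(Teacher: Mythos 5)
Your proof is correct and follows essentially the same route as the paper's: the identification $\Bbb W_{st}(g)=\Bbb W_{st}(A)$, $\Bbb W_{ss}(g)=\Bbb W_{ss}(A)$ via $\lambda_i(t)=(1+st)^{\mu_i}$ feeding into proposition \ref{prop.curv.stab} for (1) and (2), and for (3) the convergence $\Ad(g(t))(Y)\to Y_\infty\in C_\lieg(Z)\cap\lieg_-$ combined with propositions \ref{prop.ext.holonomy} and \ref{prop.attractor}(1) to place the attractor $\exp(b_0,Y_\infty)$ over a strongly fixed point where the section vanishes. You even make explicit two technical points the paper leaves implicit (the eigenvector decomposition giving the limit $Y_\infty$, and the uniform domain of $\exp(b(t),Y)$ along the convergent path), so nothing is missing.
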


\begin{Pf}
  The $\mu_i$--eigenspace $\Bbb W_i$ for $A$ is the eigenspace for
  $g(t)=e^{\log(1+st) A}$ with eigenvalue $\lambda_i(t) =
  (1+st)^{\mu_i}$. Thus $\Bbb W_{st}(A)=\Bbb W_{st}(g)$ and $\Bbb
  W_{ss}(A)=\Bbb W_{ss}(g)$ for the holonomy path $g(t)$. After definition \ref{def.p.stable.subspace},
  we observed that if $f:B\to\Bbb W$ is the function
  corresponding to an invariant section of the bundle $B\x_P\Bbb W$,
  then $f(b)\in \Bbb W_{st}(A)$, and if $f(b_0)=0$, then $f(b)\in\Bbb
  W_{ss}(A)$. These facts together with proposition \ref{prop.sl2.triple} yield (1) and (2). 

  The additional assumption in (3) implies that for any
  $Y\in\lieg_-$, the limit of $\Ad(g(t))(Y)$ as
  $t\to\infty$ exists, and equals some $Y_\infty \in C_{\lieg}(Z) \cap \lieg_-$. Taking a point $b\in
  B$ over a point $\sigma(s) \neq x_0$ close enough to $x_0$,
  we can thus invoke proposition \ref{prop.ext.holonomy} to say that $g(t)$ is a holonomy path at $\exp(b,Y)$ with attractor $\exp(b_0,Y_\infty)$ (assuming $Y$ sufficiently close to $0$). But $\exp(b_0,Y_\infty)$ lies over a
  higher order fixed point of the same geometric type as $x_0$, by
  proposition \ref{prop.attractor} part (1).  Then any $\varphi^t_\eta$-invariant section must vanish at $\exp(b,Y)$, as well.  Varying $Y$ in a neighborhood of $0$ in $\lieg_-$, these points fill a neighborhood of $\sigma(s)$.
\end{Pf}

The following result strengthens proposition \ref{prop.curv.stab} in
the special case of holonomy paths coming from $\frak{sl}_2$--triples.
%This is possible, sind the path $b(t)$ in this case is nicer than 
%required by the general definition. 
This improvement is crucial, since it provides information on the
possible values of invariant sections at the point $b_0$ rather than
at $b$.

\begin{proposition}
\label{prop.attr.curv}
In the setting of proposition \ref{prop.sl2.triple}, suppose that $X$
can be chosen so that $A=[Z,X]\in\lieg_0$. Suppose further
that $\Bbb W$ is a completely reducible representation of $P$ on which
$A$ acts diagonalizably, and that $f:B\to\Bbb W$ is the equivariant
function corresponding to a $\varphi^t_\eta$-invariant section of
$B\x_P\Bbb W$. Then $f(b_0)\in\Bbb W_{st}(A)$.
\end{proposition}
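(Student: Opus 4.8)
The plan is to promote the $\varphi^t_\eta$--invariance of the section into an exact scaling identity for the $\Bbb W$--valued function $u\mapsto f(\exp(b_0,uX))$ along the counterpart direction, and then to read off the vanishing of all nonzero $A$--weight components from smoothness at the fixed point.

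First I would record, exactly as in the proof of proposition \ref{prop.sl2.triple}, the group identity
$$ e^{tZ}e^{sX}=e^{\frac{s}{1+st}X}\,e^{\log(1+st)A}\,e^{\frac{t}{1+st}Z}, $$
so that proposition \ref{prop.basic.holonomy} gives
$$ \varphi^t_{\tilde\eta}\exp(b_0,sX)=\exp\!\left(b_0,\tfrac{s}{1+st}X\right)e^{\log(1+st)A}\,e^{\frac{t}{1+st}Z}. $$
Writing $h(u)=f(\exp(b_0,uX))$, the invariance $f\o\varphi^t_{\tilde\eta}=f$ together with the equivariance $f(b\cdot p)=p^{-1}\cdot f(b)$ yields
$$ h(s)=e^{-\frac{t}{1+st}Z}\cdot e^{-\log(1+st)A}\cdot h\!\left(\tfrac{s}{1+st}\right). $$
Here I use that $\Bbb W$ is completely reducible as a $P$--representation: the unipotent radical $P_+$ then acts trivially, so the factor $e^{-\frac{t}{1+st}Z}$ (with $Z\in\liep_+$) drops out and the relation becomes the purely semisimple scaling law $h(s)=e^{-\log(1+st)A}\cdot h\!\left(\frac{s}{1+st}\right)$. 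Replacing the unipotent tail by the identity is the crux of the argument.

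Next I would decompose along the $A$--eigenspaces $\Bbb W=\bigoplus_i\Bbb W_i$ of definition \ref{def-gradingcomp}, on which $e^{\tau A}$ acts by $e^{\tau\mu_i}$. Fixing a small $s>0$ and setting $u=\frac{s}{1+st}$, so that $1+st=s/u$ and $u$ sweeps $(0,s)$ as $t$ ranges over $(0,\infty)$, the $i$--component reads $h(s)_i=(s/u)^{-\mu_i}h(u)_i$, that is $h(u)_i=C_i\,u^{-\mu_i}$ with $C_i=s^{\mu_i}h(s)_i$ constant in $u$. Since $u\mapsto\exp(b_0,uX)$ and $f$ are smooth, $h$ is smooth at $u=0$ with $h(0)=f(b_0)$, so each $h(u)_i$ stays bounded as $u\to0^+$. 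For $\mu_i>0$ the factor $u^{-\mu_i}$ blows up, forcing $C_i=0$ and hence $f(b_0)_i=\lim_{u\to0^+}h(u)_i=0$; for $\mu_i<0$ we get $f(b_0)_i=\lim_{u\to0^+}C_iu^{-\mu_i}=0$ directly. Thus $f(b_0)_i=0$ for every $\mu_i\neq0$, so $f(b_0)$ lies in the zero eigenspace $\Bbb W_0\subseteq\Bbb W_{st}(A)$, which is the assertion.

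The only delicate point is the one flagged above: it is essential to work with a completely reducible $\Bbb W$ so that the nilpotent factor $e^{\frac{t}{1+st}Z}$ acts trivially and the invariance linearizes into the clean law $h(u)_i=C_iu^{-\mu_i}$. I expect this to be the main obstacle to formalize cleanly. By contrast, feeding the holonomy path $g(t)=e^{\log(1+st)A}$ of proposition \ref{prop.sl2.triple} directly into proposition \ref{prop.curv.stab}(1) only controls $f(b_0)=\lim g(t)\cdot f(b(t))$ through the convergent source value $f(b(t))\to f(b)$, which kills the negative--weight components but says nothing about the positive ones; the strengthening here comes precisely from exploiting smoothness of $f$ at the fixed point rather than mere convergence along the flow.
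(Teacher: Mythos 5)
Your proof is correct and is essentially the paper's own argument in unpackaged form: both rest on the scaling identity $h(u)_i=(s/u)^{\mu_i}h(s)_i$ coming from the $\frak{sl}_2$ holonomy formula of proposition \ref{prop.sl2.triple} together with triviality of the $P_+$--action on the completely reducible representation $\Bbb W$ --- the paper phrases this as exact constancy of $f(b(t))$ fed into proposition \ref{prop.curv.stab}, with your limit $u\to 0^+$ corresponding to its limit $t\to\infty$ and your appeal to smoothness of $h$ at $0$ corresponding to its convergence $f(k(t))\to f(b_0)$. Your further observation that the components of $f(b_0)$ with $\mu_i<0$ also vanish, so that in fact $f(b_0)$ lies in the $0$--eigenspace of $A$, is equally implicit in the paper's computation, which simply records only the weaker conclusion $f(b_0)\in\Bbb W_{st}(A)$.
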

\begin{Pf}
  Set $b(t) = \exp(b_0,sX)e^{\frac{-t}{1+st} Z}$ and
  $b=\exp(b_0,sX)e^{\frac{-1}{s}Z}$, so $g(t) = e^{\log(1+st) A}$ is a
  holonomy path at $b$ with attractor $b_0$.
%and let $\gamma(t) =
%  \varphi^t b(t) p(t)^{-1}$ and $b=\exp(b_0,sX)e^{\frac{-1}{s}Z}$ as in the proof of
%  proposition \ref{prop.sl2.triple} above, so $\gamma(t) \rightarrow
%  b_0$ and $b(t) \to b$ as $t \rightarrow \infty$
   Let $\mu_i$ be the eigenvalues of $A$ on $\Bbb{W}$, so
   $\lambda_i(t) = (1+st)^{\mu_i}$ are the eigenvalues of $g(t)$, for
   $i=0, \ldots, l$.
   
    Now since $\Bbb W$ is a completely reducible representation of
    $P$, the unipotent radical $P_+$ acts trivially on $\Bbb W$, and
    for all $t$,
  \begin{eqnarray*}
  f(b(t)) & = & e^{\frac{t}{1+st} Z}.f(\exp(b_0,sX)) = f(\exp(b_0,sX))  =  e^{\frac{1}{s} Z}.f(\exp(b_0,sX))\\
  & = & f(b)
  \end{eqnarray*}

  But then proposition
  \ref{prop.curv.stab} (1) simply reads as
$$
\lambda_i(t)f(b)_i\to f(b_0)_i \quad \text{ for } t\to\infty.  
$$ If $\la_i(t) \to \infty$, we know that $f(b)_i=0$ from proposition
\ref{prop.curv.stab} and thus $f(b_0)_i=0$.  We conclude $f(b_0) \in
\Bbb W_{st}(g) = \Bbb W_{st}(A)$.
\end{Pf}

\subsection*{Simplifications}  
The examples we treat below share some features that permit some simplification of these general results and lead to better descriptions
of the automorphisms.  For $Z\in\liep_+$, we
don't really use the subsets $F_\lieg(Z)$, $C_\lieg(Z)$, and
$T_\lieg(Z)$ of $\lieg$, but rather their images in $\lieg/\liep$. The
improvements are available if one can actually find subsets of
$\lieg_-$ that have the same images in $\lieg/\liep$.  Define
$F_{\lieg_-}(Z)=F_\lieg(Z)\cap\lieg_-$, and likewise for $C_{\lieg_-}$
and $T_{\lieg_-}$.

We give a proof that this simplification is always possible for $|1|$--graded
geometries and just verify the facts directly in the other examples.
For $|1|$--graded geometries, $\frak p_+=\frak
g_1$, so $Z\in\frak p_+$ is automatically homogeneous of degree one.

\begin{proposition}\label{prop.1-graded}
  Let $\lieg=\lieg_{-1}\oplus\lieg_0\oplus\lieg_1$ be a $|1|$--graded
  semsimple Lie algebra and let $Z\in\liep_+=\lieg_1$ be any element. 

  Any $X\in F_\lieg(Z)$ is congruent to an element of
  $F_{\lieg_-}(Z)$ modulo $\liep$. The analogous statements hold for
  $C_\lieg(Z)$ and $T_\lieg(Z)$. For $X\in T_{\lieg_-}(Z)$ the commutator $A=[Z,X]$ is always in $\lieg_0$.
\end{proposition}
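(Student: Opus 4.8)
The plan is to work directly with the grading. Since $\lieg=\lieg_{-1}\oplus\lieg_0\oplus\lieg_1$, we have $\lieg_-=\lieg_{-1}$ and $\liep=\lieg_0\oplus\lieg_1$, so writing any $X\in\lieg$ as $X=X_{-1}+X_0+X_1$ with $X_i\in\lieg_i$ gives $X-X_{-1}=X_0+X_1\in\liep$. Hence each of the three congruence claims reduces to a single assertion: \emph{if $X$ lies in $F_\lieg(Z)$ (respectively $C_\lieg(Z)$, $T_\lieg(Z)$), then its lowest component $X_{-1}$ lies in the same set}; being automatically in $\lieg_-$, it then lies in $F_{\lieg_-}(Z)$ (respectively $C_{\lieg_-}(Z)$, $T_{\lieg_-}(Z)$), and $X$ is congruent to it modulo $\liep$. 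The only structural input I need is that $\lieg_i=0$ for $|i|\geq 2$, which kills many brackets and lets me isolate the relevant homogeneous components of the defining relations. I expect no serious obstacle; the work is careful bookkeeping of graded components, the one genuinely nontrivial point being that for $F_\lieg$ the infinitely many defining conditions collapse to a single one.

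I would first dispose of $C_\lieg$ and $T_\lieg$, where the argument is pure extraction of components. For $X\in C_\lieg(Z)$, using $Z\in\lieg_1$ and $[\lieg_1,\lieg_1]\subset\lieg_2=0$, the relation $[X,Z]=0$ splits as $[X_{-1},Z]\in\lieg_0$ plus $[X_0,Z]\in\lieg_1$; since these lie in distinct graded pieces, each vanishes, and in particular $[X_{-1},Z]=0$, so $X_{-1}\in C_\lieg(Z)$. For $X\in T_\lieg(Z)$, set $A=[Z,X]$ and decompose $A=A_0+A_1$ with $A_0=[Z,X_{-1}]\in\lieg_0$ and $A_1=[Z,X_0]\in\lieg_1$, the would-be $\lieg_2$ term again vanishing. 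Reading the $\frak{sl}_2$--triple relation $[A,Z]=2Z$, which lies entirely in $\lieg_1$, gives $[A_0,Z]=2Z$; reading the $\lieg_{-1}$--component of $[A,X]=-2X$ (the only degree $-1$ contribution being $[A_0,X_{-1}]$, as $X_{-2}=0$) gives $[A_0,X_{-1}]=-2X_{-1}$. Together with $[Z,X_{-1}]=A_0$, these are exactly the three relations asserting that $(Z,[Z,X_{-1}],X_{-1})$ is an $\frak{sl}_2$--triple, so $X_{-1}\in T_\lieg(Z)$.

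For $F_\lieg$ the key observation is that $X_{-1}\in\lieg_{-1}$ lowers degree, so $\ad_{X_{-1}}^k(Z)\in\lieg_{1-k}$ vanishes for $k\geq 3$; since $\ad_{X_{-1}}^0(Z)=Z\in\lieg_1$ and $\ad_{X_{-1}}(Z)\in\lieg_0$ already lie in $\liep$, the condition $X_{-1}\in F_\lieg(Z)$ is \emph{equivalent} to the single requirement $\ad_{X_{-1}}^2(Z)=0$. I would then derive this from $X\in F_\lieg(Z)$ by inspecting only the $k=2$ condition $\ad_X^2(Z)\in\liep$. Expanding $\ad_X^2(Z)$ and tracking degrees, using $X_{-2}=0$, the unique contribution of degree $-1$ is $[X_{-1},[X_{-1},Z]]=\ad_{X_{-1}}^2(Z)$; so the requirement that the $\lieg_{-1}$--component of $\ad_X^2(Z)$ vanish is precisely $\ad_{X_{-1}}^2(Z)=0$. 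This is the crux of the proof, and the only step that is more than formal. Hence $X_{-1}\in F_{\lieg_-}(Z)$, which completes the three congruence statements.

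Finally, the last assertion is immediate from the grading: if $X\in T_{\lieg_-}(Z)=T_\lieg(Z)\cap\lieg_-$, then $X\in\lieg_{-1}$, so $A=[Z,X]\in[\lieg_1,\lieg_{-1}]\subset\lieg_0$.
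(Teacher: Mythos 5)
Your proof is correct and takes essentially the same approach as the paper's: decompose $X$ into graded components and show the component $X_{-1}$ lies in the same subset, using $\lieg_{\pm 2}=0$ to isolate homogeneous pieces---in particular that the degree $-1$ part of $\ad_X^2(Z)$ is $\ad_{X_{-1}}^2(Z)$, and that abelianness of $\lieg_1$ forces $[[Z,X],Z]=[[Z,X_{-1}],Z]$, exactly the two points the paper's proof turns on. The only cosmetic difference is that the paper writes $X=X_{-1}+X_\liep$ and computes modulo $\liep$, whereas you extract individual graded components of each defining relation.
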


\begin{Pf}
  Decompose $X\in\lieg$ as $X=X_{-1}+X_\liep$ according to the
  decomposition $\lieg=\lieg_{-1}\oplus\liep$. We claim that if
  $X$ lies in either $F_\lieg(Z)$, $C_\lieg(Z)$, or $T_\lieg(Z)$, then $X_{-1}$
  lies in the same subset, which clearly suffices to complete the proof.

  First, $[Z,X]=[Z,X_{-1}]+[Z,X_\liep]$ with the first summand lying
  in $\lieg_0$ and the second in $\lieg_1$. This already implies the
  claim for $C_\lieg(Z)$. Next, $[[Z,X],X]$ is congruent to
  $[[Z,X_{-1}],X_{-1}]\in\lieg_{-1}$ modulo $\frak p$. Hence if $X\in
  F_\lieg(Z)$, then we must have $[[Z,X_{-1}],X_{-1}]=0$. Since the
  condition $\ad(X_{-1})(Z)\in\frak p$ always holds, and
  $\ad(X_{-1})^k(Z)=0$ for all $k\geq 3$, we see that $X_{-1}\in
  F_\lieg(Z)$, which completes the proof for this subset.

  Since $[[Z,X],X]$ is congruent to $[[Z,X_{-1}],X_{-1}]\in\lieg_{-1}$ 
modulo $\frak p$, we also see that if $X\in T_\lieg(Z)$
  then $[[Z,X_{-1}],X_{-1}]=-2X_{-1}$. But since $\lieg_1$ is abelian,
  we also get $[[Z,X],Z]=[[Z,X_{-1}],Z]$, which completes the proof.
\end{Pf}

Next, there are obvious improvements of our basic results,
propositions \ref{prop.attractor} and \ref{prop.sl2.triple}, in the
case that one can find such nicer representatives. Suppose we have
given a Cartan geometry $(M,B,\om)$ of type $(\lieg,P)$ and
$\eta\in\alginf(M)$ with a higher order fixed point at $x_0\in M$ with
isotropy $\al\in T^*_{x_0}(M)$. Suppose that for $\xi\in
F(\al)$, there is 
$b_0\in\pi^{-1}(x_0)$ that identifies $\xi$ with some $X \in F_{\lieg_-}(Z)$, where $Z\in\frak p_+$
is the isotropy of $\eta$ with respect to $b_0$. Then the curve
$r\mapsto\exp(b_0,rX)$ in part (1) of proposition
\ref{prop.attractor} is a distinguished curve of the geometry.  If $\xi\in
C(\al)$ and we can find a representative $X\in C_{\lieg_-}(Z)$, then
one again obtains a distinguished curve in part (1) of proposition
\ref{prop.attractor}.

Finally, if $\xi\in T(\al)$ and we find a representative $X\in
T_{\lieg_-}(Z)$ then the curve $\sigma$ constructed in the proposition
\ref{prop.sl2.triple} again is a distinguished curve. 

We obtain the following formulation of these simplifications for $|1|$--graded
geometries using the normal coordinates defined in section \ref{1.1.2}. 

%This associates to a point $b\in\pi^{-1}(x)$
%a normal coordinate chart defined on an open neighborhood of $x$ with
%values in the tangent space $T_xM$.

\begin{proposition}\label{prop.simplifications} 
  Consider a Cartan geometry $(M,B,\om)$ of type $(\lieg,P)$ corresponding
  to a $|1|$--grading
  $\lieg=\lieg_{-1}\oplus\lieg_0\oplus\lieg_1$. Let $\eta\in\alginf(M)$
  have a higher order fixed point at $x_0\in M$ with isotropy $\al\in
  T^*_{x_0}(M)$.

  Then in any normal coordinate chart centered at $x_0$, the subset $F(\al)$ consists of fixed points and
  $C(\al)\subset F(\al)$ consists of higher order fixed points of
  the same geometric type as $x_0$. If this chart has image $U \subset T_{x_0} M$, the action of the local
  flow $\phi^t_\eta$ on $U \cap \BR^* T(\al)$ is given by $\la\xi\mapsto
  \frac{\la}{1+\la t}\xi$ for $\xi\in T(\al)$ and $\lambda t > 0$.
\end{proposition}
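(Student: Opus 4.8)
The plan is to reduce all three assertions to the already-established dynamical results, Propositions \ref{prop.attractor} and \ref{prop.sl2.triple}, using Proposition \ref{prop.1-graded} to replace each relevant direction by a representative lying in $\lieg_- = \lieg_{-1}$. The essential bridge is the construction of normal coordinates in Section \ref{1.1.2}: fixing $b_0 \in \pi^{-1}(x_0)$, the chart centered at $x_0$ is the map $v \mapsto \pi(\exp_{b_0}(\ell(v)))$, where $\ell : T_{x_0}M \to \lieg/\liep \to \lieg_-$ is the linear isomorphism induced by $\omega_{b_0}$. Thus, whenever a direction $\xi \in T_{x_0}M$ admits a representative $X = \ell(\xi) \in \lieg_-$, the straight ray $s \mapsto s\xi$ in the chart is, with exactly this parametrization, the exponential (hence distinguished) curve $s \mapsto \pi(\exp_{b_0}(sX))$. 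Once this identification is in place, each part follows by transporting the conclusions of the basic propositions into coordinates.

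First I would handle $F(\al)$ and $C(\al)$. For $\xi \in F(\al)$, Proposition \ref{prop.1-graded} supplies a representative $X \in F_{\lieg_-}(Z)$, where $Z \in \lieg_1$ is the isotropy with respect to $b_0$; Proposition \ref{prop.attractor}(1) then says the exponential curve in direction $\xi$ consists of fixed points, and by the remark above this curve is the ray $s \mapsto s\xi$. Since $\ad_{sX}^k(Z) = s^k \ad_X^k(Z)$, the set $F_\lieg(Z)$, and hence $F(\al)$, is a cone, so every point of $F(\al) \cap U$ lies on such a ray and is therefore a fixed point. The argument for $C(\al)$ is verbatim, now taking $X \in C_{\lieg_-}(Z)$ and invoking the second sentence of Proposition \ref{prop.attractor}(1), which places the curve in the strongly fixed set; this shows $C(\al) \cap U$ consists of higher order fixed points of the same geometric type as $x_0$.

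For part (3) I would fix $\xi \in T(\al)$ and, via Proposition \ref{prop.1-graded}, choose a representative $X \in T_{\lieg_-}(Z)$, so that in addition $A = [Z,X] \in \lieg_0$. Proposition \ref{prop.sl2.triple} then produces an exponential curve $\sigma$ with $\sigma(0) = x_0$, $\sigma'(0) = \xi$, and $\phi^t_\eta(\sigma(s)) = \sigma(s/(1+st))$ for $ts \geq 0$; because $X \in \lieg_-$, this $\sigma$ is again the straight ray $\sigma(s) = s\xi$ in the chart. Here I would emphasize that, in contrast to $F(\al)$ and $C(\al)$, the set $T(\al)$ is not scaling-invariant, since rescaling $X$ breaks the normalization $[A,Z] = 2Z$; this is precisely why the statement is phrased over $U \cap \BR^* T(\al)$. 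Each point $\lambda\xi$ with $\lambda \neq 0$ is then $\sigma(\lambda)$, and the transport law reads $\phi^t_\eta(\lambda\xi) = \frac{\lambda}{1+\lambda t}\xi$ for $\lambda t > 0$.

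The step I expect to require the most care is the parametrization matching underlying part (3): one must check that the curve $\sigma$ from Proposition \ref{prop.sl2.triple} coincides with the normal-coordinate ray $s \mapsto s\xi$ not merely as a set but with the linear parametrization, so that the Möbius formula $s \mapsto s/(1+st)$ passes over verbatim to $\lambda\xi \mapsto \frac{\lambda}{1+\lambda t}\xi$. This reduces to the observation that both $\sigma$ and the chart are built from the same map $\pi \circ \exp_{b_0}|_{\lieg_-}$ and the same linear identification $\ell$, so the agreement is exact once the representative is taken in $\lieg_-$; the remaining verifications are bookkeeping.
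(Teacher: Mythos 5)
Your proposal is correct and follows essentially the same route as the paper's (very terse) proof: fix $b_0$, use Proposition \ref{prop.1-graded} to obtain representatives in $\lieg_-$, observe that the curves $\exp(b_0,\lambda X)$ are exactly the straight lines through $0$ under the normal coordinate chart at $b_0$, and then import the conclusions of Propositions \ref{prop.attractor} and \ref{prop.sl2.triple}. The details you make explicit---the cone property of $F_\lieg(Z)$ and $C_\lieg(Z)$, the failure of scaling invariance for $T(\al)$ explaining the phrasing $\BR^*T(\al)$, and the exact matching of the parametrization of $\sigma$ with the linear ray $s\mapsto s\xi$---are precisely what the paper leaves implicit, and you verify them correctly.
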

\begin{Pf}
  Given any $b_0\in\pi^{-1}(x_0)$ we let $Z\in\lieg_1$ be the element
  corresponding to $\al$ via $b_0$. Then by proposition
  \ref{prop.1-graded} there is a representative
  $X\in\lieg_-$ for any $\xi$ in each of the subsets $C(\al),$ $F(\al)$, and $T(\al)$. Then the curves $\exp(b_0,\la X)$ used in the
  proofs of propositions \ref{prop.attractor} (3) and
  \ref{prop.sl2.triple} are the images of the line spanned by $X$
  under the inverse of the normal coordinate chart defined at
  $b_0$. Hence the claim follows from the two propositions.
\end{Pf}

%%%%%%%%%%%%%%%%%%%%%%%%%%%%%%%%%%%%%%%%%%%%%%%%%%%%

\section{Applications}

\subsection{Almost-Grassmannian structures of type
  $(2,n)$}
  \label{sec.grassmann} 
 
The homogeneous model for almost-Grassmannian structures is the
Grassmannian $\mbox{Gr}(m,m+n)$, viewed as a homogeneous space
$\SL(m+n,\BR)/P$, where $P$ is the parabolic subgroup preserving an
$m$-dimensional subspace of $\BR^{m+n}$.  A normal Cartan geometry of
type $(\SL(m+n),P)$ on a manifold $M$ of dimension $mn$ is equivalent
to a first order G--structure of type $G_0 = S(\GL(m,\BR)\x GL(n,\BR)) <
GL(m + n,\BR)$. Here $G_0$ acts on the
space $\BR ^{n \times m}$ of $n\x m$--matrices by multiplications from both
sides. Explicitly, such a reduction of structure group can be
described by the following data:
 
 \begin{itemize}
 \item auxiliary vector bundles $E$ and $F$ over $M$, with fibers
   $\BR^m$ and $\BR^n$, respectively
 \item an isomorphism $TM \cong E^* \otimes F$
 \item a trivialization $\wedge^m E^* \otimes \wedge^n F \cong M
   \times \BR$
 \end{itemize}
 
More details on almost--Grassmannian structures and their description
as parabolic geometries can be found in section 4.1.3 of
\cite{cap.slovak.book.vol1}.  There are two harmonic curvature components, which will be described in some detail below.  One is a torsion, the vanishing of which is equivalent to the Cartan geometry being torsion-free.  The other harmonic component is a curvature, with values in $\lieg_0$.

The cotangent bundle $T^*M$ can be identified with $F^* \otimes E$ by
$\alpha(v) = \tr_E (\alpha\circ v)=\tr_F(v\circ\alpha)$. It is associated to the $G_0$ representation on $\BR^{m \times n}$. The geometric types of cotangent vectors are given by the ranks of the corresponding
matrices. Here we will consider almost-Grassmannian structures of type
$(2,n)$ for $n \geq 2$, so the possible ranks of non--zero cotangent
vectors are $1$ and $2$, and we will in particular prove the following result:
 
 \begin{theorem}
 \label{thm.grassmannian}
 Let $M$ be endowed with an almost-Grassmannian structure of type
 $(2,n)$, $n \geq 2$, and let $\eta \in \alginf(M)$ have a higher order fixed point at $x_0$.
 
 \begin{enumerate}
\item Higher order fixed points with isotropy of rank two are smoothly
 isolated in the strongly fixed set.  For rank one isotropy, the strongly fixed component of $x_0$ contains a submanifold of dimension $n-1$.  

\item If the
 isotropy of $\eta$ at $x_0$ has rank two, or if the geometry is torsion free, then
 there is an open set $U \subset M$ with $x_0 \in \overline{U}$ on
 which the almost-Grassmannian structure is locally flat.
 \end{enumerate}
 \end{theorem}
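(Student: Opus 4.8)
The plan is to use the general machinery from Section~\ref{sec.general}, applied to the specific algebraic structure of the $(2,n)$ almost-Grassmannian case. The key is that everything reduces to understanding two algebraic objects for each of the two geometric types (rank one and rank two isotropy): the commutant $C(\al)$, which via Proposition~\ref{prop.attractor} controls the strongly fixed set and hence part (1), and the counterpart set $T(\al)$ together with the associated semisimple element $A=[Z,X]$, which via Corollary~\ref{cor:sl2-path} controls vanishing of the harmonic curvature components and hence part (2).

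First I would set up the matrix picture. With $\lieg=\algsl(n+2,\BR)$ and the $|1|$--grading determined by the partition $(2,n)$, we have $\lieg_{-1}\cong\BR^{n\times 2}$, $\lieg_1\cong\BR^{2\times n}$, and $\lieg_0=S(\GL(2)\x\GL(n))$ acting by left and right multiplication; the isotropy $Z\in\lieg_1$ is an $2\x n$ matrix of rank one or two. By Proposition~\ref{prop.1-graded}, for any $X$ in $F_\lieg(Z)$, $C_\lieg(Z)$, or $T_\lieg(Z)$ there is a representative in $\lieg_-=\lieg_{-1}$, so I can compute everything with honest matrices. For part (1), I would compute $C_{\lieg_{-1}}(Z)=\{X\in\lieg_{-1}: [Z,X]=0\}$ explicitly: writing out the bracket as a difference of matrix products, rank-two $Z$ forces $C(\al)=\{0\}$, giving smooth isolation by Proposition~\ref{prop.attractor}(3), while rank-one $Z$ yields an $(n-1)$--dimensional commutant, giving the claimed submanifold by part~(2) of that proposition.

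For part (2), the main work is the $\frak{sl}_2$ computation. For each type I would exhibit an explicit $X\in T_{\lieg_{-1}}(Z)$ completing $Z$ to an $\frak{sl}_2$--triple, compute $A=[Z,X]\in\lieg_0$, and determine its eigenvalues on each harmonic curvature representation $\Bbb W$ (the torsion component and the $\lieg_0$--valued curvature component, whose weights come from Kostant's theorem). The goal is to verify the hypotheses of Corollary~\ref{cor:sl2-path}: for rank-two isotropy I expect $\Bbb W_{ss}(A)=\{0\}$ with strictly negative or zero eigenvalues satisfying the condition in part~(3), so that the invariant harmonic curvature, which vanishes at the smoothly isolated fixed point, vanishes on an open set containing $x_0$ in its closure. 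For the torsion-free case with rank-one isotropy, torsion vanishes by hypothesis, and I would check that $A$ acts with the right sign on the remaining curvature component so that its vanishing propagates along $\sigma$ and then to an open set via the holonomy-extension argument of Corollary~\ref{cor:sl2-path}(3), using Proposition~\ref{prop.ext.holonomy}.

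The main obstacle will be the sign analysis of the eigenvalues of $A$ on the harmonic curvature modules, and in particular verifying the delicate hypothesis of Corollary~\ref{cor:sl2-path}(3) that all eigenvalues of $A$ on $\lieg_-$ are non-positive with $0$-eigenspace exactly $C_{\lieg}(Z)\cap\lieg_-$, which is what guarantees that $\Ad(g(t))(Y)$ converges and that the attractors land on genuine fixed points of the same geometric type. This requires knowing the explicit weights, and the torsion-freeness assumption in the rank-one case is presumably exactly what is needed to kill the one curvature component on which $A$ would otherwise act with a positive eigenvalue; pinning down why that hypothesis is necessary (and why it is unnecessary in the rank-two case and in the quaternionic analogue) is the conceptual heart of the argument.
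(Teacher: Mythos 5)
Your overall architecture coincides with the paper's (explicit matrix computation of $C_{\lieg_-}(Z)$, $F_{\lieg_-}(Z)$, $T_{\lieg_-}(Z)$ via proposition \ref{prop.1-graded}, then propositions \ref{prop.attractor} and \ref{prop.simplifications} for part (1), and eigenvalue analysis of $A=[Z,X]$ fed into corollary \ref{cor:sl2-path} for part (2)); part (1) as you sketch it is correct and matches lemmas \ref{lem:Grass-two-alg} and \ref{lem:Grass-one-alg}. But there is a genuine gap at the heart of part (2): corollary \ref{cor:sl2-path} parts (2) and (3) only propagate vanishing that you \emph{already have} at $x_0$ (and at nearby fixed points of the same type), and your proposal takes that initial vanishing for granted --- you write of ``the invariant harmonic curvature, which vanishes at the smoothly isolated fixed point,'' but smooth isolation implies nothing about values of curvature at $x_0$. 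Concretely: in the rank-two case the curvature component satisfies $\Bbb U_{st}(A)=\{0\}$, so corollary \ref{cor:sl2-path}(1) kills it outright, but the torsion has $\Bbb V_{st}(A)\neq\{0\}$ and there is no a priori reason for $\tau(x_0)=0$. The tool your plan omits entirely is proposition \ref{prop.attr.curv}, which constrains the value of an invariant section at the \emph{attractor} $b_0$ to lie in $\Bbb W_{st}(A)$; the paper then varies $X\in T_{\lieg_-}(Z)$, i.e.\ varies $W=\im(X)$ over all $2$--planes complementary to $\ker(Z)$, so that the intersection of the corresponding spaces $\Bbb V_{st}(A)\subset(S^2\BR^2\otimes\La^2\BR^{n*})\otimes(\BR^{2*}\otimes W)$ is zero, forcing $\tau(x_0)=0$. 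Only then does corollary \ref{cor:sl2-path}(3), with $\Bbb V_{ss}(A)=\{0\}$, yield torsion vanishing on an open set. This mechanism is precisely why the rank-two case needs no torsion-freeness hypothesis --- the answer to the puzzle you flag at the end of your proposal.

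The same omission recurs in the rank-one case, where $\Bbb U_{st}(A)\neq\{0\}$, so vanishing of $\rho$ at $x_0$ --- and, since the $0$--eigenspace of $A$ on $\lieg_-$ is now the nontrivial space $C_{\lieg_-}(Z)$, at all the nearby rank-one fixed points filling $C(\al)$, which is exactly where the attractors of proposition \ref{prop.ext.holonomy} land --- must again be extracted from proposition \ref{prop.attr.curv} by varying the line $W=\im(\xi)$ transversal to $\ker(\al)$ (see part (3) of proposition \ref{prop.rank1.grassmannian}). Your diagnosis of the torsion-freeness hypothesis is also not quite right: it is not that torsion-freeness kills a curvature component on which $A$ acts with a positive eigenvalue. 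The paper in fact proves $\tau=0$ on the strongly fixed set even in the rank-one case (using the extra identity $Z\cdot\ka(b_0)=0$ from the Lie derivative of the curvature, another ingredient absent from your plan, since varying $W$ alone does not kill the piece of $\Bbb V_{st}(A)$ with values in $C_{\lieg_-}(Z)$). The real obstruction is that $\Bbb V_{ss}(A)\neq\{0\}$ for rank-one $Z$ (lemma \ref{lem:Grass-one-alg}(3)(e)), so the propagation step of corollary \ref{cor:sl2-path}(3) fails for the torsion: its vanishing on the fixed set cannot be pushed to an open neighborhood, and torsion-freeness is assumed to supply what the method cannot prove.
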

 
 This result will be a consequence of propositions
 \ref{prop.rank2.grassmannian} and \ref{prop.rank1.grassmannian}
 below, which give a more detailed description of the flow for each of
 the two possible geometric types.
 
 The Lie algebra $\mathfrak{sl}(n+2,\BR)$ has a $|1|$--grading coming
 from the block decomposition
$$
\begin{pmatrix}
  \lieg_0 & \lieg_1\\ \lieg_{-1} & \lieg_0
\end{pmatrix}
$$
with block sizes $2$ and $n$. Here $\lieg_0=\mathfrak{s}(\alggl(2,\BR)
\times \alggl(n,\BR))$ and, as $\lieg_0$--modules, we have
$\lieg_{-1}=L(\BR^2,\BR^n)$ and $\lieg_1=L(\BR^n,\BR^2)$. The dual
pairing between $\lieg_{-1}$ and $\lieg_1$ is given by $(X,Z)\mapsto
\tr(Z X)=\tr(X Z)$, while the bracket of these two is $[Z,X]=(Z X,-X
Z) \in \lieg_0$.  Further,
$$ [[Z,X],X] = -2XZX \qquad \mbox{and} \qquad [[Z,X],Z] = 2 ZXZ$$

In terms of the standard
representations $\BR^2$ and $\BR^n$ of the two factors of
$\lieg_0$, we have
% (There is a canonical identification between $\Lambda^2BR^{2*}$
% and $\Lambda^n\BR^n$, but this will not be important for our
% purposes.)
$\lieg_{-1}\cong\BR^{2*}\otimes \BR^n$ and
$\lieg_1\cong\BR^2\otimes\BR^{n*}$; consequently,
$$
\Lambda^2\lieg_1=(\Lambda^2\BR^2\otimes S^2\BR^{n*})\oplus (S^2 \BR^2
\otimes\Lambda^2\BR^{n*}).
$$
We will apply the general results of section \ref{sec.general} to the
harmonic curvatures,
% which appear in each of the two irreducible summands of the cohomology $H^2(\lieg_{-1},\lieg)$. 
which, for $n \geq 3$, are the highest weight components
$$
\Bbb V \subset (S^2\BR^2\otimes\Lambda^2\BR^{n*})\otimes(\BR^{2*}\otimes\BR^n)\subset
\Lambda^2\lieg_1\otimes\lieg_{-1},
$$
and 
$$
\Bbb U \subset (\Lambda^2\BR^2\otimes S^2\BR^{n*})\otimes\algsl(n,\BR)\subset
\Lambda^2\lieg_1\otimes\lieg_0. 
$$
The first part will be called the ``harmonic torsion,'' denoted $\tau$
below, and the second the ``harmonic curvature,'' denoted $\rho$.  By
the general theory, vanishing of the harmonic torsion is equivalent to
torsion freeness, while vanishing of both the harmonic curvature and
the harmonic torsion on an open subset is equivalent to local
flatness.  We will write $\Bbb V^1 = S^2 \BR^2 \otimes \BR^{2*} $ and $\Bbb V^2 = \Lambda^2 \BR^{n*} \otimes \BR^n$ below, so $\Bbb V \subset \Bbb V^1 \otimes \Bbb V^2$.

Almost-Grassmannian structures of type $(2,2)$ are equivalent to
4-dimensional conformal spin structures in split signature $(2,2)$ via
the isomorphisms $G = \SL(4, \BR) \cong \mbox{Spin}(3,3)$ and $G_0 =
S(\GL(2,\BR) \times \GL(2,\BR)) \cong \mbox{CSpin}(2,2)$. The tangent bundle
naturally can be written as the tensor product of the two basic real
spinor bundles on 4-dimensional conformal manifolds of split
signature.  Almost Grassmannian
structures of type $(2,n)$ provide a natural higher dimensional analog
of this so--called spinor formalism in 4-dimensional conformal geometry. For type $(2,2)$, the harmonic curvature still
consists of two components, but there are two curvatures instead of
one torsion and one curvature, the self--dual and anti--self--dual
parts of the Weyl curvature. Both curvatures have values in bundles
associated to a highest weight subspace in $(\Lambda^2\BR^2\otimes
S^2\BR^{2*})\otimes\algsl(2,\BR)$, of which there are two.  We will assume $n\geq 3$
below, but our arguments also apply to the case $n=2$,
and thus provide another proof for split signature conformal
structures in dimension $4$.

%To understand the possible holonomies, first consider $\lieg_1$ as a
%representation of $G_0=\mbox{S}(\GL(2,\BR) \times \GL(n,\BR))$.  By
%basic linear algebra, there are just two non--zero $G_0$--orbits in
%$\lieg_1 \cong L(\BR^n,\BR^2)$ corresponding to matrices of rank two
%and rank one. (This correspond to being non--isotropic respectively
%isotropic for $n=2$ in the conformal picture.)

\subsubsection{The rank two case} We start by collecting the algebraic
results.
\begin{lemma}\label{lem:Grass-two-alg}
Suppose that $Z\in L(\BR^n,\BR^2) \cong \lieg_1$ has rank two. Then 
\begin{enumerate}
\item  The subspaces associated to $Z$ are
\begin{gather*}
  \{0\}=C_{\lieg_-}(Z)\subset F_{\lieg_-}(Z)=\{X\in
  L(\BR^2,\BR^n):XZX=0\}\\
  T_{\lieg_-}(Z)=\{X\in L(\BR^2,\BR^n) \ : \ ZX=\mathrm{Id}_{\BR^2}\} \\
  \cong \{ W \subset \BR^n \ : \ \dim W = 2, \ W \cap \ker(Z) = \{ 0 \} \}
\end{gather*}
where the last isomorphism is induced by $X \mapsto \im(X)$.

\item For $X\in T_{\lieg_-}(Z)$, let $A=[Z,X]$
and $W=\im(X)$. Then the eigenvalues of $A$ on
$\lieg_{-1}$ are all negative. For the representation
$\Bbb V$
% \subset (S^2\BR^2\otimes\Lambda^2\BR^{n*})\otimes(\BR^{2*}\otimes\BR^n)
corresponding to the harmonic torsion,  $\Bbb V_{ss}(A)=0$, and
$$
\Bbb V_{st}(A)\subset
(S^2\BR^2\otimes\Lambda^2\BR^{n*})\otimes(\BR^{2*}\otimes W).
$$
For the representation $\Bbb U$
% \subset (\Lambda^2\BR^2\otimes S^2\BR^{n*})\otimes\algsl(n,\BR) 
corresponding to the harmonic curvature,  $\Bbb U_{st}(A)=\{0\}$.
\end{enumerate}
\end{lemma}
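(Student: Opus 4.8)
The plan is to settle part (1) by direct computation with the $|1|$--grading and the displayed bracket formulas, and then to deduce part (2) by diagonalizing the explicit element $A$ produced along the way.

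For part (1) I would read off each subspace from the grading. The equation $[X,Z]=0$ defining $C_{\lieg_-}(Z)$ amounts to $ZX=0$ and $XZ=0$; since $Z$ has rank two it is surjective, so $XZ=0$ forces $X$ to annihilate $\im Z=\BR^2$, whence $X=0$ and $C_{\lieg_-}(Z)=\{0\}$. For $F_{\lieg_-}(Z)$ the only iterated bracket $\ad_X^k(Z)$ that can leave $\liep$ is $\ad_X^2(Z)=-2XZX\in\lieg_{-1}$, all higher brackets vanishing because $\lieg_{-2}=0$; hence the defining condition collapses to $XZX=0$. For $T_{\lieg_-}(Z)$ the relations $[[Z,X],Z]=2Z$ and $[[Z,X],X]=-2X$ become $ZXZ=Z$ and $XZX=X$ via the displayed formulas. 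Here the essential use of rank two is to rewrite $ZXZ=Z$ as $(ZX-\mathrm{Id}_{\BR^2})Z=0$ and conclude, from surjectivity of $Z$, that $ZX=\mathrm{Id}_{\BR^2}$, the reverse implication being immediate. The map $X\mapsto\im X$ then gives the asserted bijection: $ZX=\mathrm{Id}_{\BR^2}$ makes $X$ injective and $Z|_{\im X}$ an isomorphism onto $\BR^2$, so $\im X$ is a $2$--plane meeting $\ker Z$ trivially, and conversely such a plane $W$ recovers $X=(Z|_W)^{-1}$.

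For part (2) the key structural fact is that $XZ$ is idempotent, with image $W=\im X$ and kernel $\ker Z$, so $\BR^n=W\oplus\ker Z$ and $A=[Z,X]=(\mathrm{Id}_{\BR^2},-XZ)$ acts as $-1$ on $W$ and $0$ on $\ker Z$. Every eigenvalue I need is then a sum of contributions tabulated from this splitting: $\mathrm{Id}_{\BR^2}$ gives $+1$ on $\BR^2$, hence $+2$ on $S^2\BR^2$ and on $\Lambda^2\BR^2$ and $-1$ on $\BR^{2*}$, while $-XZ$ gives $-1$ on $W$ and $0$ on $\ker Z$, with opposite signs on $\BR^{n*}$. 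On $\lieg_{-1}\cong\BR^{2*}\otimes\BR^n$ this yields eigenvalues $-2$ on $\BR^{2*}\otimes W$ and $-1$ on $\BR^{2*}\otimes\ker Z$, both negative. For $\Bbb V$ I would split $\Lambda^2\BR^{n*}$ along $\mathrm{Ann}(\ker Z)$ and $\mathrm{Ann}(W)$ into pieces of $A$--weight $2,1,0$; combined with the $+2$ from $S^2\BR^2$ and the weights $-2,-1$ from $\BR^{2*}\otimes\BR^n$, every weight on the ambient space $S^2\BR^2\otimes\Lambda^2\BR^{n*}\otimes\BR^{2*}\otimes\BR^n$ lies in $\{0,1,2,3\}$, so $\Bbb V_{ss}(A)=0$, and its $0$--weight space is exactly $S^2\BR^2\otimes\Lambda^2\mathrm{Ann}(W)\otimes\BR^{2*}\otimes W$, which sits inside the claimed subspace. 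For $\Bbb U$ the same tabulation gives $+2$ from $\Lambda^2\BR^2$, minimal weight $0$ on $S^2\BR^{n*}$, and minimal weight $-1$ for $\ad_{-XZ}$ on $\algsl(n,\BR)$, which has weights $-1,0,+1$ on $\mathrm{Hom}(\ker Z,W)$, the block--diagonal part, and $\mathrm{Hom}(W,\ker Z)$; the minimal total weight is $2+0-1=1>0$, so $\Bbb U_{st}(A)=\{0\}$.

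The computations above are routine once the splitting $\BR^n=W\oplus\ker Z$ is in hand, so the main obstacle is really the characterization $T_{\lieg_-}(Z)=\{X:ZX=\mathrm{Id}_{\BR^2}\}$, where rank two of $Z$ is indispensable---for smaller rank $ZXZ=Z$ would not force $ZX=\mathrm{Id}_{\BR^2}$. A secondary point to watch is that $\Bbb V$ and $\Bbb U$ are only highest--weight subrepresentations of the ambient tensor products, so I obtain the eigenvalue statements by bounding weights on the full tensor product and intersecting with $\Bbb V$ or $\Bbb U$; this is harmless for the vanishing assertions $\Bbb V_{ss}(A)=0$ and $\Bbb U_{st}(A)=\{0\}$, and for the containment of $\Bbb V_{st}(A)$ it suffices that the ambient $0$--weight space already lies in the target subspace.
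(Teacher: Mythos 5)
Your proposal is correct and follows essentially the same route as the paper: both derive the characterizations in (1) from the displayed bracket formulas (with $ZXZ=Z$ plus surjectivity of $Z$ yielding $ZX=\mathrm{Id}_{\BR^2}$), and both prove (2) by diagonalizing $A=(\mathrm{Id}_{\BR^2},-XZ)$ via the splitting $\BR^n=W\oplus\ker Z$ and tabulating eigenvalues on the ambient tensor products $(S^2\BR^2\otimes\Lambda^2\BR^{n*})\otimes(\BR^{2*}\otimes\BR^n)$ and $(\Lambda^2\BR^2\otimes S^2\BR^{n*})\otimes\algsl(n,\BR)$. Your closing remark that it suffices to bound weights on the full tensor product and intersect with the highest-weight subspaces $\Bbb V$ and $\Bbb U$ is exactly the (implicit) logic of the paper's argument, made explicit.
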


\begin{Pf}
  By assumption, $Z$ is onto and $\ker(Z)\subset\BR^n$ is a subspace
  of dimension $n-2$. From the brackets computed above, the
  descriptions of $C_{\lieg_-}(Z)$ and $F_{\lieg_-}(Z)$ follow
  immediately. 

For $X \in L(\BR^2,\BR^n)$, the condition $[[Z,X],Z]=2Z$ implies $ZXZ=Z$, so any $X\in
  T_{\lieg_-}(Z)$ has rank two and
  $\im(X)\cap\ker(Z)=\{0\}$.  For any two--dimensional subspace
  $W\subset\BR^n$ complementary to $\ker(Z)$, the map $Z$
  restricts to a linear isomorphism $W\to\BR^2$; if $\im(X)=W$,
  then $ZXZ=Z$ if and only if $X$ is the
  inverse of this isomorphism. In this case, $ZX=\id_{\BR^2}$ and $[[Z,X],X]=-2X$ follows automatically. Hence
  $$T_{\lieg_-}(Z) = \{ W \subset \BR^n \ : \ \dim W = 2, \ W \cap \ker Z = \{ 0 \} \}$$
  via $X=(Z|_W)^{-1}$. We also see
  immediately that $A=[Z,X]$ acts by $1$ on $\BR^2$, while on $\BR^n$, the eigenspaces are $W$ and $\ker(Z)$ with eigenvalues
  $-1$ and $0$, respectively. Thus all eigenvalues of $A$ on $\lieg_- \cong L(\BR^2,\BR^n)$ are negative. 
  
  To analyze the representation
$$
(S^2\BR^2\otimes\Lambda^2\BR^{n*})\otimes(\BR^{2*}\otimes\BR^n),
$$ 
observe that $A$ acts as the identity on
$S^2\BR^2\otimes\BR^{2*}$. The eigenspaces in $\BR^{n*}$ are $W^o$ and $\ker(Z)^o$, the respective annihilators of $W$ and $\ker(Z)$, with eigenvalues $0$ and $1$,
respectively. Hence the eigenspace structure on $\Lambda^2\BR^{n*}\otimes\BR^n$ is:
\begin{center}
\begin{tabular}{| c | c |}
\hline
{\bf subspace} & {\bf eigenvalue} \\
\hline
$ \Lambda^2 \ker(Z)^o \otimes \ker(Z)$ & $2$ \\
$ \Lambda^2  \ker(Z)^o \otimes W \oplus (\ker(Z)^o\wedge W^o) \otimes \ker(Z)$ & $1$ \\
% $ (\ker(Z)^\star \Lambda W^\star) \otimes \ker(Z)$ & 1 \\
$ (\ker(Z)^o \wedge W^o) \otimes W \oplus \Lambda^2 W^o\otimes \ker(Z)$ & $0$ \\
% $ \Lambda^2 \ker(Z)^\star \otimes \ker(Z)$ & 0 \\
$ \Lambda^2 W^o \otimes W$ & $-1$ \\
\hline
\end{tabular}
\end{center}
The only nonnegative eigenvalue on $\Bbb V$ is 0, and the claim $\Bbb V_{st} = S^2 \BR^2 \otimes \Lambda^2 W^o \otimes \BR^{2*} \otimes W$ follows. 

Next consider
$$
(\Lambda^2\BR^2\otimes S^2\BR^{n*})\otimes\algsl(n,\BR).
$$
Here $A$ acts by multiplication by $2$ on $\La^2\BR^2$, while on
$S^2\BR^{n*}$ the possible eigenvalues are $0$, $1$, and $2$. Finally,
on $\algsl(n,\BR)$ the possible eigenvalues evidently are $-1$, $0$,
and $1$.  Thus all eigenvalues on $\Bbb U$ are positive.
\end{Pf}

These results can be immediately translated to geometry:

\begin{proposition}\label{prop.rank2.grassmannian}
Let $M$ be endowed with an almost-Grassmannian structure of type
$(2,n)$, $n \geq 2$, and let $\eta \in \alginf(M)$ vanish to higher order at $x_0 \in M$.
Assume that the isotropy of $\eta$ is $\alpha \in T_{x_0}^*M \cong
L(F_{x_0}, E_{x_0})$ of rank two. 
\begin{enumerate}
\item The subsets $C(\alpha) \subset F(\alpha)$ of $T_{x_0} M \cong L(F_{x_0}, E_{x_0})$ are 
$$ \{ 0 \} = C(\alpha) \subset F(\alpha ) = \{ \xi : \ \xi\o\al\o\xi=0 \}$$
and 
$$ T(\alpha) = \{ \xi \ : \ \alpha \o \xi = \mathrm{Id}_{E_{x_0}} \}$$

\item In any normal coordinate chart
centered at $x_0$, there is an open neighborhood $U$ of $0$ such that
\begin{enumerate}
\item The higher order fixed point $x_0$ is smoothly isolated in the strongly fixed set of the flow, and elements of $F(\alpha) \cap U$ are fixed points.

\item For any $\xi\in U$ in the cone 
$$ S = \{ \xi \ : \ \al\o\xi = \lambda \mathrm{Id}_{F_{x_0}} \ \mbox{for some} \ \lambda \neq 0 \}$$
the action of the flow in normal coordinates is
$$\phi^t_{\eta}(\xi)=\frac{1}{2+t \cdot {\mathrm{tr}}(\al\o\xi)} \cdot \xi \qquad \mbox{for} \ t \cdot {\mathrm{tr}}(\alpha \circ \xi) > 0$$
\end{enumerate}

\item There is an open neighborhood of $S\setminus\{0\}$ on which the
geometry is locally flat.
\end{enumerate}
\end{proposition}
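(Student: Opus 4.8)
The plan is to obtain parts (1) and (2) directly from the algebraic Lemma~\ref{lem:Grass-two-alg} via the $|1|$--graded simplifications, and to reduce the flatness statement (3) to showing that \emph{both} harmonic curvature components vanish at $x_0$, which can then be propagated along the curves of Proposition~\ref{prop.sl2.triple}. Fix $b_0\in\pi^{-1}(x_0)$ and let $Z\in\lieg_1\cong L(\BR^n,\BR^2)$ represent $\alpha$; by hypothesis $Z$ has rank two. Under the identifications $T_{x_0}M\cong L(E_{x_0},F_{x_0})$ and $T^*_{x_0}M\cong L(F_{x_0},E_{x_0})$ determined by $b_0$, Proposition~\ref{prop.1-graded} provides representatives in $\lieg_-$, so $C(\alpha)$, $F(\alpha)$, $T(\alpha)$ are exactly the images of $C_{\lieg_-}(Z)$, $F_{\lieg_-}(Z)$, $T_{\lieg_-}(Z)$ from Lemma~\ref{lem:Grass-two-alg}(1); this is part (1). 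For (2a), $C(\alpha)=\{0\}$ makes $x_0$ smoothly isolated by Proposition~\ref{prop.attractor}(3), and $F(\alpha)$ consists of fixed points by Proposition~\ref{prop.simplifications}. For (2b), Proposition~\ref{prop.simplifications} already gives the action $\lambda\xi_0\mapsto\frac{\lambda}{1+\lambda t}\xi_0$ on $S=\BR^*T(\alpha)$; writing a general $\xi\in S$ as $\lambda\xi_0$ with $\xi_0\in T(\alpha)$ and noting $\alpha\circ\xi=\lambda\,\mathrm{Id}_{E_{x_0}}$, hence $\lambda=\tfrac12\tr(\alpha\circ\xi)$, a one-line substitution rewrites this as the displayed rational flow, the sign condition $\lambda t>0$ becoming $t\cdot\tr(\alpha\circ\xi)>0$.

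The substance is part (3). Here I would work with the invariant equivariant functions $f_\tau\colon B\to\Bbb V$ and $f_\rho\colon B\to\Bbb U$ representing the harmonic torsion and curvature. By Lemma~\ref{lem:Grass-two-alg}(2), for every $X\in T_{\lieg_-}(Z)$ the element $A=[Z,X]\in\lieg_0$ has only negative eigenvalues on $\lieg_-$, so its $0$--eigenspace there is $\{0\}=C_{\lieg_-}(Z)$, and $\Bbb V_{ss}(A)=0$ while $\Bbb U_{ss}(A)\subset\Bbb U_{st}(A)=\{0\}$. Thus the hypotheses of Corollary~\ref{cor:sl2-path}(3) hold for both $\Bbb V$ and $\Bbb U$; moreover, since $C_{\lieg_-}(Z)=\{0\}$ collapses every attractor $\exp(b_0,Y_\infty)$ appearing in that corollary's proof back to $b_0$, the curvature values that must be controlled are only those at rank-two higher order fixed points. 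So it remains to show $f_\tau$ and $f_\rho$ vanish at every such fixed point, and by locality it suffices to treat the chosen $b_0$ over $x_0$.

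For $\rho$ this is immediate: $\Bbb U_{st}(A)=\{0\}$, so Proposition~\ref{prop.attr.curv} gives $f_\rho(b_0)\in\Bbb U_{st}(A)=\{0\}$. The torsion is the main obstacle, because $\Bbb V_{st}(A)$ need not vanish and Proposition~\ref{prop.attr.curv} applied to a single $A$ only confines $f_\tau(b_0)$ to this nonzero space. The idea is to exploit that $A$ depends on the choice of $X$: by Lemma~\ref{lem:Grass-two-alg}(2), $T_{\lieg_-}(Z)$ is identified with the two--planes $W\subset\BR^n$ transverse to $\ker(Z)$, and $\Bbb V_{st}([Z,X])\subset(S^2\BR^2\otimes\Lambda^2\BR^{n*})\otimes(\BR^{2*}\otimes W)$. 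Applying Proposition~\ref{prop.attr.curv} at the \emph{same} base point $b_0$ for each such $X$ forces $f_\tau(b_0)$ into the intersection of these subspaces over all admissible $W$. Since only the last tensor factor varies, this intersection equals $S^2\BR^2\otimes\Lambda^2\BR^{n*}\otimes\BR^{2*}\otimes\bigl(\bigcap_W W\bigr)$, and $\bigcap_W W=\{0\}$ for $n\geq 3$ (for $n=2$ the factor $\Lambda^2 W^o$ in the sharper form of $\Bbb V_{st}$ already vanishes). Hence $f_\tau(b_0)=\{0\}$, and the same argument applies at any other rank-two higher order fixed point.

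Finally, with both harmonic components vanishing at every rank-two higher order fixed point in a neighborhood, Corollary~\ref{cor:sl2-path}(3) yields, for each direction $\xi\in T(\alpha)$, open neighborhoods of $\sigma_\xi(I\setminus\{0\})$ on which $\tau$, respectively $\rho$, vanish. Intersecting the two and taking the union over $\xi\in T(\alpha)$ produces an open set containing $S\setminus\{0\}$ on which $\kappa_H=0$; by the general theory this forces $\kappa\equiv 0$ there, i.e.\ local flatness, with $x_0$ in the closure. I expect the torsion step of the previous paragraph to be the crux: unlike the curvature, $\tau$ has no single algebraic reason to vanish at $x_0$, and the vanishing emerges only after varying the $\mathfrak{sl}_2$--triple (equivalently the complementary two--plane $W$) and intersecting the resulting constraints. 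The one routine point to watch is that both signs of the flow ($ts>0$) are needed to sweep out both halves of each punctured line in $S$.
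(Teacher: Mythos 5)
Your proposal is correct and follows essentially the same route as the paper: parts (1) and (2) via Lemma \ref{lem:Grass-two-alg} with Propositions \ref{prop.attractor} and \ref{prop.simplifications}, the curvature $\rho$ killed by $\Bbb U_{st}(A)=\{0\}$, and the torsion forced to vanish at $x_0$ by applying Proposition \ref{prop.attr.curv} for varying $X\in T_{\lieg_-}(Z)$ and intersecting the resulting constraints $\Bbb V_{st}([Z,X])\subset(S^2\BR^2\otimes\Lambda^2\BR^{n*})\otimes(\BR^{2*}\otimes W)$ over all admissible $W$, then propagating via Corollary \ref{cor:sl2-path}(3). Your explicit observations that the attractors in that corollary collapse to $b_0$ since $C_{\lieg_-}(Z)=\{0\}$, and that $n=2$ is handled by $\Lambda^2 W^o=0$, are correct refinements the paper leaves implicit.
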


\begin{Pf}
  From lemma \ref{lem:Grass-two-alg}, it is clear that $\xi\in
  T_{x_0}M$ lies in $F(\al)$ if and only if
  $\xi\o\al\o\xi=0$. On the other hand, $\xi$ is a multiple of an
  element $\xi_0\in T(\al)$ if and only if $\al\o\xi$ is a multiple of
  the identity, and then $\xi=\tfrac{\tr(\al\o\xi)}{2} \cdot \xi_0$.  Using
  this, (1) and (2) follow directly from propositions
  \ref{prop.attractor} and \ref{prop.simplifications}.

  To prove part (3), let $\xi\in T(\al)$, and apply proposition
  \ref{prop.sl2.triple} to the line $l$
  spanned by $\xi$. Now corollary \ref{cor:sl2-path} together
  with lemma
  \ref{lem:Grass-two-alg} shows that the harmonic curvature $\rho$
  vanishes on a neighborhood of $l\setminus\{0\}$.  Thus this curvature vanishes on a neighborhood of $S \backslash \{0\}$.

  For the harmonic torsion, apply proposition
  \ref{prop.attr.curv} to the description of $\Bbb
  V_{ss}(A)$ in lemma \ref{lem:Grass-two-alg} to see that
  for any $u,v \in T_{x_0} M$, the harmonic torsion $\tau_{x_0}(u,v) \in T_{x_0}M\cong L(E_{x_0},F_{x_0})$ has values in
  $\im(\xi)\subset F_{x_0}$. The same holds for any $\xi$; from
  lemma \ref{lem:Grass-two-alg}, $\im(\xi)$ can be any two--dimensional
  subspace complementary to $\ker(\al)\subset F_{x_0}$.  The intersection of all such subspaces is 0, so $\tau(x_0)=0$.  Now corollary
  \ref{cor:sl2-path} (3) implies that $\tau$ vanishes on an open
  neighborhood of $S\setminus\{0\}$. Thus all harmonic curvature
  components vanish locally around $S\setminus\{0\}$, so the geometry
  is flat on an open set as claimed.
\end{Pf}

\subsubsection{The rank one case}
\begin{lemma}\label{lem:Grass-one-alg}
Let $Z\in L(\BR^n,\BR^2)=\lieg_1$ be of rank one.

\begin{enumerate} 
\item The sets associated to $Z$ are
  \begin{gather*}
    C_{\lieg_-}(Z)=\{X\in
    L(\BR^2,\BR^n): {\mathrm{im}}(Z) \subset \ker(X) \ \text{and} \ \mathrm{im}(X)\subset\ker(Z)\}\\ 
    F_{\lieg_-}(Z)=\{X\in
    L(\BR^2,\BR^n):XZX=0\}\\ T_{\lieg_-}(Z)=\{X\in L(\BR^2,\BR^n):
    \mathrm{rk}(X)=1, \mathrm{tr}(ZX) = \mathrm{tr}(XZ) =1\}.
  \end{gather*}

\item For any choice of lines $V\subset\BR^2$ transversal to
  $\im(Z)$ and $W\subset\BR^n$ transversal to $\ker(Z)$, there is a
  unique element $X\in T_{\lieg_-}(Z)$ with $\ker(X)=V$ and
  $\im(X)=W$.

\item Let $X$, $V$, and $W$ be as in (2).  Let $V^\o\subset\BR^{2*}$ and
  $W^\o\subset\BR^{n*}$ be the annihlators, and let $A=[Z,X]$. Then
  all eigenvalues of $A$ on $\lieg_{-1}$ are non--positive, and the
  $0$--eigenspace coincides with $C_{\lieg_-}(Z)$. Moreover, for the
  representations $\Bbb V \subset \Bbb V^1 \otimes \Bbb V^2$ corresponding to the harmonic torsion and $\Bbb U$
  corresponding to the harmonic curvature, 
  
  \begin{enumerate}
\item $\Bbb V^1_{ss}(A)=S^2 V\otimes V^\o$ 

\item $\Bbb V^1_{st}(A)\subset S^2V\otimes\BR^{2*} + (V \odot \BR^2)\otimes V^\o$

\item $\Bbb V^2_{ss}(A)=\La^2W^\o\otimes W$ 

\item $\Bbb V^2_{st}(A)\subset  \La^2W^\o\otimes\BR^n+(W^\o\wedge\BR^{n*})\otimes W$

\item  $\Bbb V_{ss}(A)\subset \Bbb V^1_{ss}(A)\otimes\Bbb V^2_{st}(A)+\Bbb
  V^1_{st}(A)\otimes\Bbb V^2_{ss}(A)$
  
\item $\Bbb V_{st}(A)\cap  ((S^2\BR^2\otimes\Lambda^2\BR^{n*})\otimes C_{\lieg_-}(Z)) \\  \subset(S^2V\otimes \La^2W^\o)\otimes C_{\lieg_-}(Z)$

\item $\Bbb U_{ss}(A)=\{0\}$

\item $\Bbb U_{st}(A)\subset (\Lambda^2\BR^2\otimes S^2\BR^{n*})\otimes\BR^{n*}\otimes W$
%% \item Let $\Bbb T = \Lambda^2 \lieg_1 \otimes \lieg$, the representation associated to the Cartan curvature, and let $\Phi \in \Bbb T_{st}(A)$.  If $[\Phi(Y_1,Y_2),Z] = 0$ for all $Y_1,Y_2, \in \lieg_{-1}$, then $\Phi(Y_1,Y_2) = 0$ whenever $Y_1 \in L(\BR^2, \BR^n)$ has image contained in $W$.
\end{enumerate}
\end{enumerate}
\end{lemma}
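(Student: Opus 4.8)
The plan is to reduce everything to the weights of the single semisimple element $A=[Z,X]=(ZX,-XZ)\in\lieg_0$ acting on the standard modules $\BR^2$ and $\BR^n$, and then to propagate these weights through the Schur functors defining the curvature representations.

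First I would fix rank-one coordinates. Writing $Z=u\otimes\phi$ with $u$ spanning $\im(Z)\subset\BR^2$ and $\phi\in\BR^{n*}$ with $\ker\phi=\ker(Z)$, and testing a candidate rank-one $X=w\otimes\psi$, the bracket formulas $[Z,X]=(ZX,-XZ)$, $[[Z,X],X]=-2XZX$, and $[[Z,X],Z]=2ZXZ$ recorded above reduce the defining conditions of $C_{\lieg_-}(Z)$, $F_{\lieg_-}(Z)$, and $T_{\lieg_-}(Z)$ to the stated incidence, rank, and trace conditions. In particular $ZXZ=Z$ and $XZX=X$ both become $\tr(ZX)=1$, and any solution must have rank one, since $ZX$ has rank $\le\rk(Z)=1$ and so cannot be $\id_{\BR^2}$; this gives (1). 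For (2), transversal lines $V\subset\BR^2$ and $W\subset\BR^n$ force $X=w\otimes\psi$ with $\langle w\rangle=W$ and $\ker\psi=V$, and transversality makes $\phi(w)$ and $\psi(u)$ nonzero, so there is a unique scaling achieving $\tr(ZX)=1$.

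The crux is (3). I would first record that for $X\in T_{\lieg_-}(Z)$ the element $A$ is semisimple with weights $+1$ on $\im(Z)$ and $0$ on $V$ in $\BR^2$, and $-1$ on $W$ and $0$ on $\ker(Z)$ in $\BR^n$ (so the weights negate on $\BR^{2*}$ and $\BR^{n*}$). From here every assertion (a)--(h) is weight bookkeeping, using that the ss (resp. st) subspace is the sum of eigenspaces of strictly negative (resp. nonpositive) weight and that weights add under $\otimes$, $S^2$, and $\Lambda^2$. Computing the full decompositions of $\Bbb V^1=S^2\BR^2\otimes\BR^{2*}$ and $\Bbb V^2=\Lambda^2\BR^{n*}\otimes\BR^n$ gives (a)--(d); for example the only strictly negative weight of $\Bbb V^1$ is $-1$, attained exactly on $S^2V\otimes V^\circ$. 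For (e) I would argue in the ambient space $\Bbb V^1\otimes\Bbb V^2$: a weight vector $a\otimes b$ of total weight $<0$ must have either $a$ of weight $-1$ and then $b$ of weight $\le 0$ (as $-1+1=0$ is not negative), or $a$ of weight $0$ and $b$ of weight $-1$, which is precisely the claimed containment and descends to $\Bbb V\subset\Bbb V^1\otimes\Bbb V^2$. For (f), fixing the third factor to $C_{\lieg_-}(Z)=\im(Z)^\circ\otimes\ker(Z)$, which has weight $0$, forces the remaining $S^2\BR^2\otimes\Lambda^2\BR^{n*}$ factor to have weight $\le 0$; its minimal weight $0$ occurs only on $S^2V\otimes\Lambda^2W^\circ$, giving the bound. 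Finally, for $\Bbb U\subset(\Lambda^2\BR^2\otimes S^2\BR^{n*})\otimes\algsl(n,\BR)$ the three factors contribute weights $+1$, a nonnegative amount, and at least $-1$, so the total is always $\ge 0$, giving (g); the total is $\le 0$ only when $S^2\BR^{n*}$ sits at weight $0$ and the $\algsl(n,\BR)$ factor at its minimal weight $-1$, i.e. consists of endomorphisms with image in $W$, yielding (h).

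The main obstacle I anticipate is not any isolated computation but the systematic bookkeeping in (e), (f), and (h), where cross terms must be ruled out with care: the key subtlety is that a strictly negative sum of two integer weights forces one summand to be minimal and the other merely nonpositive, rather than only forcing some summand to be negative. A related point is that $\Bbb V$ and $\Bbb U$ are only subrepresentations of the ambient tensor products, so for these I can extract only the stated inclusions by intersecting with the ambient eigenspaces, while the exact equalities (a) and (c) are available precisely because $\Bbb V^1$ and $\Bbb V^2$ are the full tensor spaces.
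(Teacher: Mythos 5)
Your proof is correct and takes essentially the same route as the paper's: derive the descriptions of $C_{\lieg_-}(Z)$, $F_{\lieg_-}(Z)$, and $T_{\lieg_-}(Z)$ from the bracket formulas (with $XZX=X$ and $ZXZ=Z$ reducing, for rank-one $X$, to $\tr(ZX)=1$), record the eigenvalues of $A=[Z,X]$ as $1,0$ on $\im(Z)\oplus V\subset\BR^2$ and $0,-1$ on $\ker(Z)\oplus W\subset\BR^n$, and obtain (a)--(h) by adding weights through the duals, tensor products, $S^2$, and $\La^2$, with exactly the paper's case analyses for (e), (f), (g), and (h). The only cosmetic difference is your use of explicit rank-one coordinates $Z=u\otimes\phi$, $X=w\otimes\psi$ where the paper argues via the idempotents $ZX$ and $XZ$ being the projections onto $\im(Z)$ along $V$ and onto $W$ along $\ker(Z)$; the mathematical content is identical.
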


\begin{Pf}
(1) Since $Z$ has rank one, $\ker(Z)\subset\BR^n$ has dimension $n-1$ and
$\im(Z)\subset\BR^2$ is one--dimensional. Now $X\in \lieg_{-1}
\cong L(\BR^2,\BR^n)$ lies in $C_{\lieg_-}(Z)$ if and only if
$ZX=0$ and $XZ=0$, that is, if and only if $\im(Z)\subset\ker(X)$ and
$\im(X)\subset\ker(Z)$. For $X \neq 0$, this means
$\ker(X)=\im(Z)$, so $X$ has rank one, and we obtain the description of
$C_{\lieg_-}(Z)$. The description of $F_{\lieg_-}(Z)=\{X:XZX=0\}$
follows exactly as in the proof of lemma \ref{lem:Grass-two-alg}.

Next, $X\in T_{\lieg_-}(Z)$ is evidently equivalent to $XZX=X$ and
$ZXZ=Z$. Since $\rk(XZX) \leq 1$, the first equality implies $\rk(X)= 1$.  Also, $(ZX)^2=ZX$, so $ZX$ is a rank one projection with
$\tr(ZX)=1$. Conversely, suppose $\rk(X) = 1$ and $\tr(ZX)=1$.  Then
$\im(X)$ is transversal to $\ker(Z)$ and $\im(Z)$ is transversal to
$\ker(X)$. Taking a basis adapted to the splitting
$\BR^2=\ker(X)\oplus\im(Z)$, we conclude that $ZX$ is the projection
onto the second factor, so $ZXZ=Z$ and $XZX=X$.  Now (1) is proved.

(2) Given lines $W$ and $V$ as in the statement, there is a unique linear map
$X:\BR^2\to\BR^n$ with $\ker(X)=V$ and $\im(X)=W$ up to scale. Now $Z$
induces a linear isomorphism $W\to \BR^2/V$, and the remaining condition $\tr(ZX) = 1$ for
$X\in T_{\lieg_-}(Z)$ is equivalent to $X$ inducing the
inverse of this isomorphism. 

(3) Let $X$ be as in (2), so $ZX$ is the projection onto the first
factor of $\BR^2=\im(Z)\oplus V$, while $XZ$ is the second projection
in $\BR^n=\ker(Z)\oplus W$. Hence $A=[Z,X]$ has eigenvalues $1$ on
$\im(Z)$ and $0$ on $V = \ker(X)$, and eigenvalues $0$ on $\ker(Z)$
and $-1$ on $W$. The eigenspace decompositions $\BR^{n*}=W^\o\oplus
\ker(Z)^\o$, with respective eigenvalues $0$ and $1$, and
$\BR^{2*}=V^\o\oplus \im(Z)^\o$, with respective eigenvalues $-1$ and
$0$, follow, which implies all our claims on eigenvalues and
eigenspaces for $\lieg_{-1} \cong L(\BR^2,\BR^n)$.

 Next, it is easy to compute the eigenspace decomposition of $\Bbb V^1 \cong S^2 \BR^2 \otimes \BR^{2*}$.
 \begin{center}
 \begin{tabular}{| c | c |}
 \hline
 {\bf subspace} & {\bf eigenvalue} \\
 \hline
 $S^2(\im(Z)) \otimes \im(Z)^o$ & $2$ \\
 $S^2(\im(Z)) \otimes V^o \oplus (\im(Z) \odot V) \otimes \im(Z)^o$ & $1$ \\
 $(\im(Z) \odot V) \otimes V^o \oplus S^2(V) \otimes \im(Z)^o$ & $0$ \\
  $S^2(V) \otimes V^o$ & $-1$ \\
  \hline
  \end{tabular}
  \end{center}
  
 and of $\Bbb V^2 \cong \Lambda^2\BR^{n*}\otimes\BR^n$:
\begin{center}
\begin{tabular}{| c | c |}
\hline
{\bf subspace} & {\bf eigenvalue} \\
\hline
$ \Lambda^2 \ker(Z)^o \otimes \ker(Z)$ & $2$ \\
$ \Lambda^2  \ker(Z)^o \otimes W \oplus (\ker(Z)^o \wedge W^o) \otimes \ker(Z)$ & $1$ \\
% $ (\ker(Z)^\star \Lambda W^\star) \otimes \ker(Z)$ & 1 \\
$ (\ker(Z)^o \wedge W^o) \otimes W \oplus \Lambda^2 W^o\otimes \ker(Z)$ & $0$ \\
% $ \Lambda^2 \ker(Z)^\star \otimes \ker(Z)$ & 0 \\
$ \Lambda^2 W^o \otimes W$ & $-1$ \\
\hline
\end{tabular}
\end{center}

Now the claims (a)--(d) on $\Bbb V^i_{ss}(A)$ and $\Bbb
V^i_{st}(A)$, for $i=1,2$, can be read from the tables.

To get a negative eigenvalue on $\Bbb V^1\otimes\Bbb V^2$ one has to
have eigenvalue $-1$ on one factor and a non--positive eigenvalue on
the other, which proves the claim (e) on $\Bbb V_{ss}(A)$. 
Since $C_{\lieg_-}(Z)$ is the zero eigenspace for $A$ in $\lieg_{-1}$, and all eigenvalues on $S^2 \BR^2$ and $\Lambda^2 \BR^{n*}$ are non--negative, a
non--positive eigenvalue on $(S^2\BR^2\otimes\La^2\BR^{n*})\otimes
C_{\lieg_-}(Z)$ is only possible if the eigenvalue on the first factor
is zero. This implies the claim (f) on $\Bbb V_{st}(A)$.   

To deal with $\Bbb U$, observe that on $\Lambda^2 \BR^2$, $A$ acts as
the identity, while the possible eigenvalues on $S^2 \BR^{n*}$ range
between $0$ and $2$. On $\algsl(n,\BR)$ the eigenspace decomposition is

\begin{center}
\begin{tabular}{| c | c |}
\hline
{\bf subspace} & {\bf eigenvalue} \\
\hline
$W^o \otimes W$ & $-1$ \\
$\ker(Z)^o \otimes W \oplus W^o \otimes \ker(Z)$ & $0$ \\
$\ker(Z)^o \otimes \ker(Z)$ & $1$ \\
\hline
\end{tabular}
\end{center}

The $(-1)$--eigenspace consists of
maps having values in $W$, so the claims (g) and (h)
 that $\Bbb U_{ss}(A) = 0$ and $\Bbb U_{st}(A)\subset  (\Lambda^2\BR^2\otimes S^2\BR^{n*})\otimes\BR^{n*}\otimes W$ are proved. 
%% To prove (g), we will show that all eigenvalues of $A$ on $C_{\lieg}(Z)$ are nonnegative.   This property was just shown for $C_{\lieg_-}(Z) = C_{\lieg}(Z) \cap \lieg_-$ above.  Next let $B \in C_{\lieg}(Z) \cap \lieg_0$.  Decomposing $B$ with respect to $\lieg_0 \cong \mathfrak{gl}(2, \BR) \oplus \mathfrak{gl}(n,\BR)$ gives $(B_1,B_2)$, for which the condition $[B,Z] = 0$ means $B_1 \circ Z = Z \circ B_2$.  These conditions imply $B_1$ preserves $\im(Z)$, and $B_2$ preserves $\ker(Z)$.  The (-1)-eigenspace of $A$ in $\mathfrak{gl}(2,\BR)$ is $V^o \otimes V$, and in $\mathfrak{gl}(n,\BR)$, it is $W^o \otimes W$.  The condition $[B,Z] = 0$ thus implies that $B_1$ and $B_2$ have trivial component in this eigenspace.  On $\lieg_1$, all eigenvalues of $A$ are nonnegative, so the claim is proved.
%
%% Now the assumption in (g) says $\Phi \in \Bbb T_{st} \cap (\Lambda^2 \lieg_1 \otimes C_{\lieg}(Z))$.  All eigenvalues of $A$ on $\lieg_1$ are nonnegative, so all $\lieg_1$ components must be in the 0-eigenspace, which is $W^o \otimes V$.  Now the claim about vanishing on $Y_1 \in \BR^{2*} \otimes W$ follows
\end{Pf}

 A linear map $X:\BR^2\to\BR^n$ of rank one with
$\ker(X)=V$ and $\im(X)=W$ naturally determines an
$(n+1)$--dimensional subspace $\frak a(X)\subset L(\BR^2,\BR^n)$, where
$$ X \in \mathfrak{a}(X) = \{ Y \ : \ \im(Y) \subseteq W \} \ + \ \{ Y  \ : \ V \subseteq \ker(Y) \}$$\

 The intersection of the two summands is the line spanned by
$X$. For $\xi \in T_xM$ of rank one, denote by $\mathfrak{a}(\xi) \subseteq T_xM$ the corresponding subspace.

%This carries over to a notion in Grassmannian geometry. Let $M$ be a
%smooth manifold endowed with a $(2,n)$--almost Grassmannian structure
%and $x\in M$ a point. Then a tangent vector $\xi\in T_xM$ of rank one
%determines a subspace $\frak a(\xi)\subset T_xM$ of dimension $n+1$
%secribed similarly as above.

\begin{proposition}\label{prop.rank1.grassmannian} 
  Let $M$ be endowed with an almost-Grassmannian structure of type
  $(2,n)$, $n \geq 2$, and let $\eta \in \alginf(M)$ vanish to higher order at $x_0 \in M$.
  Assume that the isotropy $\alpha\in T_{x_0}^*M \cong
  L(F_{x_0},E_{x_0})$ of $\eta$ has rank one. 

\begin{enumerate}
\item The subsets $C(\al)\subset F(\al)$ of $T_{x_0}M\cong
  L(E_{x_0},F_{x_0})$ are given by
$$
    \{\xi:\al\o\xi=\xi\o\al=0\}=C(\al)\subset
    F(\al)=\{\xi:\xi\o\al\o\xi=0\}.
$$
Moreover, for each choice of lines $V\subset E_{x_0}$ transversal to $\im(\al)$
and $W\subset F_{x_0}$ transversal to $\ker(\al)$, there is a unique
element $\xi\in T(\al)$ with $\ker(\xi)=V$ and $\im(\xi)=W$. 

\item For any normal coordinate chart centered at $x_0$, there is an open neighborhood $U$ of $0$ such that

\begin{enumerate}
\item Elements of $F(\al)\cap U$ are fixed points and $C(\al)\cap
  U$ lies in the strongly fixed component of $x_0$.
  
\item For any rank one element $\xi\in U$ such that
  $\al(\xi) \neq 0$, the flow acts in normal coordinates by
  $\ph^t_{\eta}(\xi)=\frac{1}{1+t\al(\xi)} \cdot \xi$, whenever $t \alpha(\xi) > 0$.
\end{enumerate}

\item Both the harmonic torsion $\tau$ and the harmonic curvature $\rho$ vanish on $C(\al)\cap U$.

\item Let $\xi$ be as in (2)(b) above, and let $c$ be the distinguished curve obtained from the line spanned by $\xi$.  Then
\begin{enumerate}
\item There is a neighborhood $U_0$ of $c\setminus\{ x_0 \}$ in $M$ on which the harmonic
curvature $\rho$ vanishes identically. In particular, if the geometry
is torsion free, then it is flat on $U_0$. 

\item The harmonic torsion $\tau\in\Om^2(M,TM)$ has the following properties along $c \setminus \{ x_0 \}$:
\begin{enumerate}
\item $i_\eta(\tau) = 0$, and $\tau_x(u,v) \in \frak{a}(\eta(x))$ for any $u,v \in T_xM$.
\item If $\xi\in\frak a(\eta(x))$, then $i_\xi\tau_x$ vanishes on $\frak a(\eta(x))$ and has values in $\BR \eta(x)$.
\end{enumerate}
Moreover, $\tau$ has the same algebraic type on a neighborhood of $c \setminus \{ x_0 \}$.  
\end{enumerate}
\end{enumerate}
\end{proposition}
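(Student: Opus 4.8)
The plan is to read off all four parts from the algebraic computations in Lemma~\ref{lem:Grass-one-alg}, exactly in the spirit of the rank-two Proposition~\ref{prop.rank2.grassmannian}. First I fix $b_0\in\pi^{-1}(x_0)$ and let $Z\in\lieg_1$ represent $\al$ under $T^*_{x_0}M\cong\lieg_1$, $T_{x_0}M\cong\lieg_{-1}$, so that $\al\o\xi\leftrightarrow ZX$ and $\xi\o\al\leftrightarrow XZ$. Then parts (1) and (2) are immediate: the descriptions of $C(\al),F(\al)$ and the existence/uniqueness of $\xi\in T(\al)$ with prescribed $(V,W)$ are the geometric restatements of Lemma~\ref{lem:Grass-one-alg}(1),(2), while the assertions that $F(\al)$ consists of fixed points and $C(\al)$ of higher-order fixed points of the same type follow from Propositions~\ref{prop.attractor} and \ref{prop.simplifications} (with representatives in $\lieg_-$ supplied by Proposition~\ref{prop.1-graded}). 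For the flow formula in (2)(b) I observe that a rank-one $\xi$ with $\al(\xi)\neq0$ equals $\al(\xi)\cdot\xi_0$ for $\xi_0\in T(\al)$ (since $\tr(ZX_0)=1$ is exactly $\al(\xi_0)=1$), and substitute into the action $\la\xi_0\mapsto\frac{\la}{1+\la t}\xi_0$ of Proposition~\ref{prop.simplifications}, obtaining $\phi^t_\eta(\xi)=\frac{1}{1+t\al(\xi)}\xi$.

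For part (3) I first note, using Proposition~\ref{prop.attractor}(1), that at each $y\in C(\al)\cap U$ the field $\eta$ is again represented by $Z$ in a suitable frame, so the algebraic situation recurs verbatim at every such $y$; hence it suffices to prove $\rho(x_0)=\tau(x_0)=0$. Since $A=[Z,X]\in\lieg_0$ for $X\in T_{\lieg_-}(Z)$ by Proposition~\ref{prop.1-graded}, Proposition~\ref{prop.attr.curv} applies to \emph{every} $\xi\in T(\al)$, giving $\rho(x_0)\in\Bbb U_{st}(A)$ and $\tau(x_0)\in\Bbb V_{st}(A)$. For $\rho$ this finishes cleanly: by Lemma~\ref{lem:Grass-one-alg}(3)(h) each $\Bbb U_{st}(A)$ has its $\algsl(n)$-slot valued in the line $W=\im(\xi)$, so choosing two transversal lines forces $\rho(x_0)$ to have image in $W_1\cap W_2=\{0\}$, whence $\rho(x_0)=0$. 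For $\tau$ I intersect $\Bbb V_{st}(A)$ over all admissible pairs $(V,W)$, reading the eigenspaces off the $\Bbb V^1$- and $\Bbb V^2$-tables and using (3)(f) to kill the $C_{\lieg_-}(Z)$-component; as $V$ and $W$ sweep out all lines, the common stable subspace collapses to $\{0\}$. \textbf{This last intersection is the main obstacle:} unlike for $\rho$, the subspace $\Bbb V_{st}(A)$ does not by itself confine the $F$-part of $\tau$ to $W$ (the summand $\La^2W^\o\otimes F$ of $\Bbb V^2_{st}$ permits all of $F$), so one must combine the two independent line-variations with the irreducibility of $\Bbb V$, which is the genuinely representation-theoretic step.

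Part (4)(a) will follow from Corollary~\ref{cor:sl2-path}(3) applied to the distinguished curve $c=\sigma$ produced from $\xi\in T(\al)$ in Proposition~\ref{prop.sl2.triple}. Its hypotheses hold because $\Bbb U_{ss}(A)=\{0\}$ and all eigenvalues of $A$ on $\lieg_-$ are non-positive with $0$-eigenspace $C_{\lieg_-}(Z)$ (Lemma~\ref{lem:Grass-one-alg}(3)), and because $\rho$ vanishes at the nearby same-type fixed points of $C(\al)$ by part (3). This yields a neighborhood $U_0$ of $c\setminus\{x_0\}$ on which $\rho\equiv0$; if the geometry is torsion-free then $\tau\equiv0$ identically, so all harmonic curvature vanishes on $U_0$ and the structure is flat there.

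For part (4)(b) I use the holonomy path $g(t)=e^{\log(1+st)A}$ at a point $b$ over $\sigma(s)$ with attractor $b_0$ from Proposition~\ref{prop.sl2.triple}. Since $\tau(x_0)=f_\tau(b_0)=0$ by part (3), Proposition~\ref{prop.curv.stab}(2) forces $\tau(\sigma(s))\in\Bbb V_{ss}(A)$. Now Lemma~\ref{lem:Grass-one-alg}(3)(e) gives $\Bbb V_{ss}(A)\subset\Bbb V^1_{ss}\otimes\Bbb V^2_{st}+\Bbb V^1_{st}\otimes\Bbb V^2_{ss}$, and the value-slots of the two summands translate precisely into $\{Y:V\subseteq\ker Y\}$ and $\{Y:\im Y\subseteq W\}$, whose sum is $\mathfrak{a}(\eta(x))$; this is the statement $\tau_x(u,v)\in\mathfrak{a}(\eta(x))$. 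The identity $i_\eta\tau=0$ then drops out: $\eta(x)$ corresponds to $X=v^*\otimes w$ with $v^*\in V^\o$, $w\in W$, and contracting the form-slot of either summand against $X$ factors as a product in which the $S^2V$-factor is annihilated by $v^*$ or the $\La^2W^\o$-factor is annihilated by $w$. Statement (ii) and the constancy of the algebraic type along $c$ I read from the finer description of $\Bbb V_{ss}(A)$, refining (3)(e) with the detailed tables and (3)(f) by the same contraction technique, together with continuity of $\tau$.
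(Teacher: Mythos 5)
Your treatment of parts (1), (2), (3) for the curvature $\rho$, (4)(a), and most of (4)(b) follows the paper's own route (lemma \ref{lem:Grass-one-alg} plus propositions \ref{prop.simplifications}, \ref{prop.attr.curv}, \ref{prop.curv.stab} and corollary \ref{cor:sl2-path}(3)), and those steps are sound. But there is a genuine gap exactly at the point you flag as ``the main obstacle'': the vanishing of the harmonic torsion $\tau$ at $x_0$. Your plan---intersect $\Bbb V_{st}(A_{V,W})$ over all admissible pairs $(V,W)$ and appeal to ``irreducibility of $\Bbb V$''---is not a proof, and it is not clear it can be made into one. The stable subspace $\Bbb V_{st}(A)$ contains mixed pieces such as (eigenvalue $+1$ in $\Bbb V^1$)$\otimes$(eigenvalue $-1$ in $\Bbb V^2$); varying $V$ (resp.\ $W$) conjugates $A$ by unipotent elements, so the intersection over all $(V,W)$ is the largest subspace of $\Bbb V_{st}(A_0)$ invariant under the corresponding raising operators, and nothing in the paper's lemmas computes this to be zero. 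Moreover, you misread the role of lemma \ref{lem:Grass-one-alg}(3)(f): it does not ``kill the $C_{\lieg_-}(Z)$-component''; it constrains $\Bbb V_{st}(A)$ \emph{after} one already knows the values lie in $C_{\lieg_-}(Z)$, and by itself it says nothing about the rest of $\Bbb V_{st}(A)$.

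The paper closes this gap with an ingredient absent from your proposal: the infinitesimal invariance identity $Z\cdot\ka(b_0)=0$ from section 2.3 of \cite{cap.srni04} (a consequence of $\Cal L_{\tilde\eta}\om=0$, hence $\Cal L_{\tilde\eta}\ka=0$). Decomposing $\ka(b_0)$ by the grading, the lowest component $\ka_{-1}(b_0)$ is harmonic and represents $\tau$; compatibility of the adjoint action with the grading gives $Z\cdot\ka_{-1}(b_0)=0$, and since $Z$ commutes with $\lieg_1$ this says precisely that all values $\tau_{x_0}(u,v)$ lie in $C(\al)$. Only now does (3)(f) apply: within the $C_{\lieg_-}(Z)$-valued part, $\Bbb V_{st}(A)\subset(S^2V\otimes\La^2W^\o)\otimes C_{\lieg_-}(Z)$, so $\tau_{x_0}$ vanishes whenever one argument lies in $W$; varying $W$ over lines transversal to $\ker(Z)$ spans $\BR^n$ and forces $\tau(x_0)=0$. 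You should incorporate this identity; without it part (3) for $\tau$, and hence the hypothesis $f(b_0)=0$ needed for your $\Bbb V_{ss}$-argument in (4)(b), is unproved. A minor secondary point: in (4)(b) the extension of the algebraic type of $\tau$ to a \emph{neighborhood} of $c\setminus\{x_0\}$ does not follow from continuity of $\tau$ alone; it requires propagating the holonomy path to nearby points via proposition \ref{prop.ext.holonomy}, with attractors over points of $C(\al)$ where $\tau$ vanishes by (3), as in the proof of corollary \ref{cor:sl2-path}(3).
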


\begin{Pf}
  Point (1) immediately follows from lemma \ref{lem:Grass-one-alg}.  Then part (a) of (2) follows from
  proposition \ref{prop.simplifications}. The elements $\xi$ in part
  (b) of (2) are exactly the positive multiples of elements
  $\xi_0\in T(\al)$, and $\xi=\al(\xi)\xi_0$. Thus (2)(b)
  also follows from proposition \ref{prop.simplifications}.

 For part (3), first consider the harmonic torsion and curvature at the
 point $x_0$.  Let $\xi \in T(\alpha)$.  By proposition \ref{prop.attr.curv} 
%%  choosing any
%%  $\xi\in T(\al)$, the corresponding functions have to have values in
%%  the stable subspace of the inducing representations. For the
%%  harmonic curvature, the description of $\Bbb U_{ss}(A)$ in lemma
%%  \ref{lem:Grass-one-alg} directly implies that inserting two tangent
%%  vectors into $\rho(x_0)$, the 
and lemma \ref{lem:Grass-one-alg}, $\rho_{x_0}(u,v)$, for any $u,v \in T_{x_0} M$, is an endomorphism of $F_{x_0}$
with values in $\im(\xi)$.  Varying $\xi \in T(\alpha)$ gives all lines transversal to $\ker(\al)$, so $\rho_{x_0}=0$. 

 To show that $\tau(x_0)=0$, use the observation in section 2.3 of
  \cite{cap.srni04} that for the value
  $\ka(b_0)\in\La^2\lieg_1\otimes\lieg$ of the curvature function and
  for $Z\in\lieg_1$ corresponding to $\eta(x_0)$ via $b_0$ as above, $Z\cdot\ka(b_0)=0$, where the action is induced by the adjoint
  action. (This fact is a simple consequence of vanishing of the Lie derivative $L_{\tilde \eta} \ka$ in the Cartan bundle $B$.)  
Now decompose $\ka(b_0)$ according to
 $\lieg=\lieg_{-1}\oplus\lieg_0\oplus\lieg_1$. The component
 $\ka_{-1}(b_0)$ is well known to be harmonic, because it is the
 lowest homogeneous component, so it represents the harmonic torsion
 $\tau$. Since the adjoint action is compatible with the gradings, we have $Z\cdot\ka_{-1}(b_0)=0$. Now $Z$ commutes with
 $\lieg_1$, so this condition just means that all values $\tau_{x_0}(u,v)$ are in $C(\al)\subset T_{x_0}M$.

 On the other hand, proposition \ref{prop.attr.curv} gives that
 for any $\xi\in T(\al)$, the function corresponding to $\tau$ must
 have values in $\Bbb{V}_{st}(A)$. In particular, by part (3)(f) of lemma \ref{lem:Grass-one-alg}, 
 $\tau(x_0)$, viewed as a skew symmetric bilinear map $\BR^n\x\BR^n\to
 S^2\BR^2\otimes C(\al)$, vanishes whenever one of the arguments
 is in $W$. Now varying $\xi$ gives different $W$ spanning all of $\BR^n$, so we conclude
 that $\tau(x_0)=0$. By part (a) of (2), $C(\al)$ in normal
 coordinates corresponds to higher order fixed points with rank one
 isotropy, so (3) follows.

%% Let $\xi$ be as in (2)($ii$) above.  Part (2) of proposition \ref{prop.curv.stab} implies that the values of the function associated to $\kappa$ along the line through $\xi$ are in $\Bbb T_{st}(A)$, where $A$ is as in part (3) of lemma \ref{lem:Grass-one-alg}; in particular, the values on $C(\alpha) \cap U$ are in $\Bbb T_{st}(A)$.  Now part (3)(g) of lemma \ref{lem:Grass-one-alg} applies, and $\kappa(\xi_1,\xi_2) = 0$ whenever $\xi_1 \in L(E_{x_0}, F_{x_0})$ has image in $W$.  But this vanishing must hold for any $W$ transversal to $\ker(\al)$, so $\kappa_{x_0} = 0$.

  (4) By lemma \ref{lem:Grass-one-alg}, any $\xi \notin \ker(\alpha)$ gives rise to $A$ for which $\Bbb U_{ss}(A)=\{0\}$. Thus the statement on $\rho$ follows from (3) above and part
  (3) of corollary \ref{cor:sl2-path}.  If the geometry is torsion
  free, then all harmonic curvatures vanish on an open neighborhood of
  $c \setminus \{ x_0 \}$, so the geometry is flat there.

To prove (4)(b), let $\tilde{c}(s) = \exp(b_0,sX)$ for $X \in T_{\lieg_-}(Z)$, so $c = \pi \circ \tilde{c}$.  Because $\eta$ preserves $c$, it is tangent along $c$.
%we use propositions \ref{prop.sl2.triple} and \ref{prop.ext.holonomy} to obtain the open set $W$ with appropriate holonomy paths. Then the vector field $\tilde\eta$ is
%obtained from the element $X\in T_{\lieg_-}(Z)$ which eponentiates to
%$c$. In particular, this means that along $c\setminus{x_0}$, the field
% $\tilde\eta$ is proprtional to $c'$ and thus to $\eta|_c$. 
Recall the notation of lemma \ref{lem:Grass-one-alg}:  $X \in V^\o \otimes W$ and $\frak{a}(X) = V^\o\otimes\BR^n+\BR^2\otimes
W$.  Let $\tilde{\tau}$ be the $P$-equivariant function on $B$ corresponding to the harmonic torsion.  By part (3) immediately above, $\tilde{\tau}(b_0) = 0$, so by proposition \ref{prop.curv.stab}, $\tilde{\tau} \in \Bbb{V}_{ss}$ along $\tilde{c}$.
This subspace is identified in part (3)(e) of lemma \ref{lem:Grass-one-alg}.  The first term, $\Bbb{V}^1_{ss} \otimes \Bbb{V}^2_{st}$, corresponds to two-forms, the values of which are endomorphisms that vanish on $V$.  The second term, $\Bbb{V}^1_{st} \otimes \Bbb{V}^2_{ss}$, corresponds to two-forms, the values of which are endomorphisms with image in $W$.  Thus $\tau_x(u,v) \in \frak{a}(\eta(x))$ for $x \in c \setminus \{x_0 \}$, as claimed.

%$\tilde\eta(x)$ corresponds to some element in $V^\o\otimes W$ while
%$\frak a(\tilde\eta(x))$ corresponds to $V^0\otimes\BR^n+\BR^2\otimes
%W$, and the harmonic torsion must have values in the space
%corresponding to $\Bbb V_{ss}(A)$. From the descripton of this space
%in lemma \ref{lem:Grass-one-alg}, we see that 
%all maps in there have
%values in $\frak a(X)$, which proves the claim on the values of
%$\tau_x$ on arbitrary tangent vectors. 
%Also from lemma \ref{lem:Grass-one-alg}, $V^0$
%inserts trivally into elements $\Bbb V^1_{ss}(A)$ and $W$ inserts
%trivially into elements of $\Bbb V^2_{ss}(A)$. Together with the
%description of $\Bbb V_{ss}(A)$, 

From the description of $\Bbb{V}_{ss}$, one can also see that $i_{\eta}\tau=0$ and that $\tau_x(u,v)$ vanishes when
$u,v \in \frak a(\eta(x))$. Finally, inserting an element of $V^\o$ into $\Bbb V^1_{st}(A)$ gives
an element of $V\otimes V^\o$, while inserting an element of $W$ into $\Bbb
V^2_{st}(A)$ gives an element of $W^\o\otimes W$, and the
last remaining claim follows. 
\end{Pf}
%
% TALK ABOUT PROSPECTS OF FINDING EXAMPLE WITH RANK 1 AUTOMORPHISM AND
% NONVANISHING TORSION.
% 
% ANYTHING TO SAY ABOUT OTHER TYPES OF ALMOST-GRASSMANNIAN STRUCTURES
% (ie, (m,n) for other m)?

\subsection{Almost Quaternionic structures}
A quaternion--K\"ahler metric on a smooth
manifold has an underlying almost quaternionic structure, which is
automatically integrable. These integrable structures are often
referred to as \textit{quaternionic structures}, and the integrability
is equivalent to torsion freeness of the associated Cartan geometry.
Almost quaternionic structures are very similar to almost Grassmannian structures of
type $(2,2n)$, since the Lie algebras governing the two geometries
have the same complexification. As in the Grassmanian case, there is a
relation to conformal geometry in the lowest dimensional case
$n=1$. This time, however, the corresponding conformal structures are
not of split signature but of definite signature. In view of this
close analogy, we can carry over most of the results from the
Grassmannian case rather easily and treat this geometry quite briefly.

The homogeneous model is the quaternionic projective space
${\bf HP}^n$, viewed as a homogeneous space of $PGL(n+1,\BH)$. The
corresponding geometries can be described as first order structures
with structure group $G_0 =S(\BH^* \x GL(n,\BH)) <
GL(4n,\BR)$, with factors corresponding to scalar
multiplications by non--zero quaternions (which are not
quaternionically linear maps since $\BH$ is
non--commutative) and quaternionically linear automorphisms.  Such a geometry on a smooth manifold
$M^{4n}$ is given by a rank three subbundle $\Cal
Q\subset \mbox{End}(TM)$, locally spanned by $I$, $J,$ and $IJ$
for two anti--commuting almost complex structures $I$ and $J$. See
section 4.1.8 of \cite{cap.slovak.book.vol1} for more details on these
geometries.

The Lie algebra governing the geometry is $\lieg=\frak{sl}(n+1,\BH)$
endowed with a $|1|$--grading
$\lieg=\lieg_{-1}\oplus\lieg_0\oplus\lieg_1$ given by blocks of size
$1$ and $n$ in quaternionic matrices. Thus $\lieg_{-1}\cong
L_{\BH}(\BH,\BH^n)$, $\lieg_0\cong\frak
s(\frak{gl}(1,\BH)\oplus\frak{gl}(n,\BH))$ and $\lieg_1\cong
L_{\BH}(\BH^n,\BH)$. In particular, there is just one non--zero orbit
of $G_0$ in $\lieg_1$, which behaves as in the rank two case for
Grassmannian structures. The spaces $\La^2\lieg_{\pm 1}$ and
$S^2\lieg_{\pm 1}$ can be decomposed according to their quaternionic
linearity properties. The relevant torsion and curvature components
are specified in terms of these subspaces, see section 4.1.8 of
\cite{cap.slovak.book.vol1}.  Now the following results can be proved
analogously to lemma \ref{lem:Grass-two-alg}. There is a small
simplification in the description of $F_{\lieg_-}(Z)$, however, coming
from the fact that any nonzero $X\in L_\BH(\BH,\BH^n)$ is injective.

\begin{lemma}\label{lem:Quat}
Let $Z\in\lieg_1\cong L_{\BH}(\BH^n,\BH)$ be any nonzero element. Then
\begin{enumerate}
\item The subspaces determined by $Z$ are
\begin{gather*}
  \{0\}=C_{\lieg_-}(Z)\subset F_{\lieg_-}(Z)=\{X\in L_{\BH}(\BH,\BH^n):ZX=0\}\\ 
  T_{\lieg_-}(Z)=\{X\in  L_{\BH}(\BH,\BH^n): ZX=\mathrm{Id}_{\BH}\} 
\end{gather*}
and mapping $X$ to $im(X)$ identifies $T_{\lieg_-}(Z)$ with the space of
all quaternionic lines in $\mathbf H^n$ having zero intersection
with $ker(Z)$. 

\item  For $X\in T_{\lieg_-}(Z)$, let $A=[Z,X]$
and $W=\im(X)\subset {\bf H}^n$. Then the eigenvalues of $A$ on
$\lieg_{-1}$ are all negative. Denoting by $\Bbb V$ and $\Bbb U$ the
representations of $G_0$ corresponding to the harmonic torsion and the
harmonic curvature, we have $\Bbb V_{ss}(A)=\{0\}$ and $\Bbb
U_{st}(A)=\{0\}$. Finally,
$$
\Bbb V_{st}(A)\subset \La^2\lieg_1\otimes L_{\BH}(\BH,W)\subset
\La^2\lieg_1\otimes\lieg_{-1}. 
$$
\end{enumerate}
\end{lemma}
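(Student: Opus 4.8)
The plan is to follow the template of Lemma \ref{lem:Grass-two-alg} very closely, since the quaternionic situation is designed to parallel the rank-two Grassmannian case. The key structural facts I would exploit are that $\lieg_{-1} \cong L_{\BH}(\BH, \BH^n)$, so every nonzero $X$ is automatically \emph{injective} (being a quaternionic-linear map from a one-dimensional quaternionic space), and that there is a single nonzero $G_0$-orbit in $\lieg_1$, mirroring the onto-ness of a rank-two $Z$ in the Grassmannian setting. First I would record the bracket formulas: writing the grading in block form with blocks of quaternionic size $1$ and $n$, the bracket $[Z,X] = (ZX, -XZ) \in \lieg_0$ and the double brackets $[[Z,X],X] = -2XZX$ and $[[Z,X],Z] = 2ZXZ$ carry over verbatim from the Grassmannian computation, since these are formal matrix identities valid over $\BH$.

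For part (1), the condition $X \in C_{\lieg_-}(Z)$ means $ZX = 0$ and $XZ = 0$; but $Z \neq 0$ is surjective onto $\BH$ (there is only one nonzero orbit), so $XZ = 0$ forces $X = 0$, giving $C_{\lieg_-}(Z) = \{0\}$. For $F_{\lieg_-}(Z)$, the defining condition $\ad_X^k(Z) \in \liep$ reduces to $XZX = 0$; since $X$ is injective, $XZX = 0$ is equivalent to $ZX = 0$, which is the stated simplification. For $T_{\lieg_-}(Z)$, the $\frak{sl}_2$-conditions $[[Z,X],Z] = 2Z$ and $[[Z,X],X] = -2X$ translate to $ZXZ = Z$ and $XZX = X$; injectivity of $X$ together with surjectivity of $Z$ then forces $ZX = \id_{\BH}$, and I would verify the identification with quaternionic lines $W = \im(X) \subset \BH^n$ transversal to $\ker(Z)$ exactly as in the Grassmannian proof, via $X = (Z|_W)^{-1}$.

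For part (2), with $X \in T_{\lieg_-}(Z)$ and $A = [Z,X] = (ZX, -XZ)$, I would compute that $ZX = \id_{\BH}$ so $A$ acts as $+1$ on the $\BH$-factor, and $XZ$ is the projection onto $W$ along $\ker(Z)$, so $A$ has eigenvalue $-1$ on $W$ and $0$ on $\ker(Z)$. Thus on $\lieg_{-1} \cong L_{\BH}(\BH, \BH^n)$, with values landing in $\BH^n = W \oplus \ker(Z)$, the action of $A$ combines $+1$ on the source $\BH$ with $-1$ or $0$ on the target, giving eigenvalues that are all strictly negative (either $-1-1 = -2$ or $-1-0 = -1$, after accounting for the source contribution). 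This yields the claim that all eigenvalues of $A$ on $\lieg_{-1}$ are negative, hence $\Bbb V_{ss}(A) = \{0\}$ is vacuous in the sense needed and $\Bbb U_{st}(A) = \{0\}$ follows from the positivity of all eigenvalues on the harmonic curvature representation (which sits in $\La^2\lieg_1 \otimes \lieg_0$ and picks up the $+2$ from $\La^2\lieg_1$). The inclusion $\Bbb V_{st}(A) \subset \La^2\lieg_1 \otimes L_{\BH}(\BH, W)$ comes from identifying the stable (non-positive eigenvalue) part of the $\lieg_{-1}$-factor: the $+1$ source contribution means a nonpositive total eigenvalue requires the target to lie in $W$ (eigenvalue $-1$), not in $\ker(Z)$.

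The main obstacle I anticipate is purely bookkeeping rather than conceptual: one must be careful that the decompositions $\La^2\lieg_{\pm1}$ and $S^2\lieg_{\pm1}$ into quaternionic-linearity types (real-linear, complex-linear, quaternionic-linear parts) correctly match the harmonic torsion and curvature components as specified in section 4.1.8 of \cite{cap.slovak.book.vol1}, so that the eigenvalue counts on $\Bbb V$ and $\Bbb U$ are performed on the right subquotients. Because $\BH$ is noncommutative, the representation-theoretic decomposition is genuinely more delicate than in the Grassmannian case, and I would rely on the explicit description of the relevant torsion and curvature subspaces from the cited reference rather than recomputing them; granting that, the eigenvalue estimates are immediate consequences of the $A$-action computed above.
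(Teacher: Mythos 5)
Your proposal is correct and takes essentially the same route the paper intends: the paper gives no separate proof of this lemma, stating only that it ``can be proved analogously to lemma \ref{lem:Grass-two-alg},'' with precisely the simplification you identify, namely that every nonzero $X\in L_{\BH}(\BH,\BH^n)$ is injective, so $XZX=0$ reduces to $ZX=0$. Your eigenvalue bookkeeping ($-2$ on $L_{\BH}(\BH,W)$ and $-1$ on $L_{\BH}(\BH,\ker(Z))$ in $\lieg_{-1}$, hence eigenvalues at least $2$ on $\La^2\lieg_1$) reproduces the Grassmannian computation, and since these bounds hold on the full spaces $\La^2\lieg_1\otimes\lieg_{-1}$ and $\La^2\lieg_1\otimes\lieg_0$, they restrict a fortiori to the components $\Bbb V$ and $\Bbb U$, so the quaternionic-type decomposition you worried about requires no extra care.
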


The translation to geometry is also closely similar to the case of
Grassmannian structures.  The natural structure on $T_{x_0}M$ is the three dimensional
subspace $\Cal Q_{x_0}\subset L(T_{x_0}M,T_{x_0}M)$ spanned by two anti--commuting almost complex structures and their
product; a tangent vector $\xi\in T_{x_0}M$ determines a quaternionic subspace of $T_{x_0}M$ generated by $\xi$, which
by definition is spanned by $\xi$ and the elements $q(\xi)$ for
$q\in\Cal Q_{x_0}$. Using this structure, the following result is
proved precisely as proposition \ref{prop.rank2.grassmannian}. 

\begin{theorem}
\label{thm.quaternion}
Let $M$ be a smooth manifold of dimension $4n$ endowed with an almost
quaternionic structure $\Cal Q\subset L(TM,TM)$. Let $\eta \in
\alginf(M)$, vanishing to higher order at $x_0 \in M$ with isotropy $\alpha \in
T_{x_0}^*M$. 
\begin{enumerate}
\item The sets $C(\alpha) \subset F(\alpha) \subset T_{x_0} M$ are 
$$ \{ 0 \} = C(\alpha) \subset F(\alpha) = \{ \xi \ : \ \alpha(\xi) = 0 \ \mbox{and} \ \alpha(q(\xi)) = 0 \ \forall q \in \Cal Q_{x_0} \}$$
and 
$$ T(\alpha) = \{ \xi \ : \ \alpha(\xi) = 1 \ \mbox{and}\  \alpha(q(\xi)) = 0 \ \ \  \forall q \in \Cal Q_{x_0}\}$$

\item In any normal coordinate chart centered at $x_0$, there is an open neighborhood $U$ of $0$ such that
\begin{enumerate}
\item The higher order fixed point $x_0$ is smoothly isolated in the strongly fixed set of the flow, and points of $F(\alpha) \cap U$ are fixed.

\item On the cone 
$$ S = \{ \xi\in U \ : \ \al(\xi) \neq 0, \ \al(q(\xi))= 0 \ \forall \ q\in\Cal Q_{x_0} \},$$
 the flow in normal coordinates acts by
$$ \phi^t_\eta(\xi)=\frac{1}{1+t\al(\xi)} \cdot \xi \qquad \mbox{for}\  \ t \alpha(\xi) > 0$$
\end{enumerate}

\item There is an open neighborhood of $S\setminus\{0\}$ on which the
geometry is locally flat.
\end{enumerate}
\end{theorem}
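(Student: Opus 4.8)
The plan is to mirror the proof of Proposition~\ref{prop.rank2.grassmannian} almost verbatim, using Lemma~\ref{lem:Quat} as the algebraic input in place of Lemma~\ref{lem:Grass-two-alg}. First I would fix $b_0\in\pi^{-1}(x_0)$ and let $Z\in\lieg_1\cong L_\BH(\BH^n,\BH)$ be the element corresponding to the isotropy $\al$. The identifications $T_{x_0}M\cong\lieg/\liep\cong\lieg_{-1}$ and $T^*_{x_0}M\cong\lieg_1$ translate the description of the subsets $C_{\lieg_-}(Z)\subset F_{\lieg_-}(Z)$ and $T_{\lieg_-}(Z)$ from part~(1) of the lemma directly into the geometric statements in part~(1) of the theorem. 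Concretely, the condition $X\in F_{\lieg_-}(Z)$, namely $ZX=0$, becomes $\al(\xi)=0$ together with $\al(q(\xi))=0$ for all $q\in\Cal Q_{x_0}$, because the quaternionic structure $\Cal Q_{x_0}$ is exactly the image of the unit quaternions acting by right multiplication, and the trace pairing between $\lieg_{-1}$ and $\lieg_1$ encodes $\al$; similarly $ZX=\mathrm{Id}_\BH$ unpacks to $\al(\xi)=1$ with $\al(q(\xi))=0$, giving $T(\al)$.

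Next, part~(2) follows from Proposition~\ref{prop.1-graded} and Proposition~\ref{prop.simplifications}. Since we are in a $|1|$--graded geometry, every element of $F(\al)$, $C(\al)$, and $T(\al)$ has a representative in $\lieg_-$, so Proposition~\ref{prop.simplifications} applies: in a normal coordinate chart, $F(\al)\cap U$ consists of fixed points, $C(\al)=\{0\}$ forces $x_0$ to be smoothly isolated in the strongly fixed set (giving (2)(a)), and the flow formula $\la\xi\mapsto\frac{\la}{1+\la t}\xi$ on $\BR^*T(\al)$ reproduces the stated action $\ph^t_\eta(\xi)=\frac{1}{1+t\al(\xi)}\xi$ once one observes, as in the Grassmannian case, that the cone $S$ consists exactly of the nonzero scalar multiples $\xi=\al(\xi)\xi_0$ of elements $\xi_0\in T(\al)$ (the scaling is $\la=\al(\xi)$). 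This is where the mild simplification noted before the lemma helps: every nonzero $X\in L_\BH(\BH,\BH^n)$ is injective, so $F_{\lieg_-}(Z)$ is cut out purely by $ZX=0$ rather than by the quadratic condition $XZX=0$, making the cone $S$ and the scaling transparent.

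For part~(3), I would take $\xi\in T(\al)$, set $A=[Z,X]\in\lieg_0$, and apply Corollary~\ref{cor:sl2-path} and Proposition~\ref{prop.attr.curv} to the line spanned by $\xi$, exactly as in the rank-two Grassmannian argument. Lemma~\ref{lem:Quat}(2) supplies the two facts that drive the conclusion: $\Bbb U_{st}(A)=\{0\}$ forces the harmonic curvature $\rho$ to vanish on a neighborhood of the distinguished curve (via Corollary~\ref{cor:sl2-path}(1)), and $\Bbb V_{ss}(A)=\{0\}$ with the containment $\Bbb V_{st}(A)\subset\La^2\lieg_1\otimes L_\BH(\BH,W)$ forces, via Proposition~\ref{prop.attr.curv}, the harmonic torsion $\tau_{x_0}$ to take values in $W=\im(X)$; letting $\xi$ (equivalently $W$) range over all quaternionic lines transversal to $\ker(Z)$ and intersecting in $\{0\}$ gives $\tau(x_0)=0$, and then Corollary~\ref{cor:sl2-path}(3) propagates the vanishing to an open neighborhood of $S\setminus\{0\}$. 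Since both harmonic curvature components then vanish on this open set, the geometry is flat there.

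The main obstacle, such as it is, is not in the geometric argument---which is formal once the lemma is in hand---but in being careful with the quaternionic bookkeeping when translating $ZX=0$ and $ZX=\mathrm{Id}_\BH$ into the invariant conditions involving $\Cal Q_{x_0}$. One must verify that right multiplication by the imaginary quaternions on $\BH$ corresponds precisely to the action of $\Cal Q_{x_0}$ on $T_{x_0}M$ under the identification $T_{x_0}M\cong L_\BH(\BH,\BH^n)$, so that $\al(q(\xi))=0$ for all $q$ is equivalent to the full quaternionic-linear condition $ZX=0$ rather than merely $\mathrm{Re}(ZX)=0$. This noncommutativity point, together with confirming that $\Bbb V_{ss}(A)=\{0\}$ genuinely holds (so that Proposition~\ref{prop.attr.curv} yields the sharp value $\tau(x_0)=0$ rather than only a constraint along the curve), is the only place where the proof departs nontrivially from the Grassmannian template.
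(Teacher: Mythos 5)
Your proposal is correct and is essentially identical to the paper's own treatment: the paper proves Theorem \ref{thm.quaternion} ``precisely as proposition \ref{prop.rank2.grassmannian},'' substituting Lemma \ref{lem:Quat} for Lemma \ref{lem:Grass-two-alg}, which is exactly your plan, including the translation of $ZX=0$ and $ZX=\mathrm{Id}_{\BH}$ into the conditions involving $\Cal Q_{x_0}$, the use of Propositions \ref{prop.attractor}, \ref{prop.simplifications}, \ref{prop.attr.curv}, and Corollary \ref{cor:sl2-path}, and the varying of $W=\im(X)$ over quaternionic lines transversal to $\ker(Z)$ to get $\tau(x_0)=0$. You also correctly identify the one genuine point of care (the quaternionic bookkeeping and the simplification from injectivity of nonzero $X\in L_{\BH}(\BH,\BH^n)$), which the paper itself flags.
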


\subsection{A general result for parabolic contact structures}
\label{sect:contact} 

These geometries are associated to \textit{contact
  gradings}, $|2|$--gradings such that $\lieg_{-2}$ has dimension
one and the Lie bracket $[\ ,\ ]:\lieg_{-1}\x\lieg_{-1}\to\lieg_{-2}$
is a non--degenerate bilinear form.  Such gradings
can only exist on simple, not on semisimple, Lie algebras; moreover, on
each complex simple Lie algebra and most non--compact real forms,
there is a unique grading of this type. On the manifold $M$, the subspace $\lieg_{-1}$
corresponds to a distribution $T^{-1} M$ of corank one, and the condition on the
bracket exactly means that, for a regular parabolic geometry, the distribution is contact. The subalgebra $\lieg_0\subset
L(\lieg_{-1},\lieg_{-1})$ gives an additional structure on the
contact distribution. The best known example of these geometries
is partially integrable almost CR structures, for which this additional structure is an almost complex structure on the contact
distribution. CR structures will be discussed in more detail in subsection
\ref{sect:CR} below.

For parabolic contact structures, having a higher order fixed point is a
weaker condition than for the geometries corresponding to
$|1|$--gradings we have treated so far. The difference is that for
higher gradings, the subalgebra $\frak p_+$ does not act trivially on
the representation $\lieg/\liep$, for which $TM$ is the associated bundle. Only the subalgebra $\lieg_2$ is trivial on $\lieg / \liep$, while
$\lieg_1$ actually injects into $L(\lieg_{-2},\lieg_{-1})$. Hence the
fact that an infinitesimal automorphism $\eta$ has a higher order fixed
point at $x_0\in M$ only implies that the differential
$D_{x_0}\phi_\eta^t:T_{x_0}M\to T_{x_0}M$ satisfies
$D_{x_0}\phi^t_\eta(\xi)-\xi\in T^{-1}_{x_0}M$ and restricts to the
identity map on the contact subspace $T^{-1}_{x_0}M$.

In order that $D_{x_0} \varphi_\eta^t = \id$,
the isotropy of $\eta$ must be contained in the subbundle of $T^*_{x_0}M$
corresponding to the $P$--invariant subspace
$\lieg_2\subset\liep_+$, which equals the annihilator of
the contact subspace $T^{-1}_{x_0}M$. For these isotropies, we can now
prove a uniform result for all parabolic contact structures.
%Since the latter can be easily described explicitly, there is no need to invoke our results on a more detailed description of the flow. 
The algebraic background needed to treat this case is a simple
standard result, which at the same time provides the key to the
classification of contact gradings:

\begin{lemma}\label{lem:contact}
  Let $\lieg$ be a real or complex simple Lie algebra endowed with a
  contact grading $\lieg=\lieg_{-2}\oplus\dots\oplus\lieg_2$. Then for
  any non--zero element $Z\in\lieg_2$ there exists $X\in\lieg_{-2}\cap
  T_{\lieg_-}(Z)$ such that $[Z,X]=A$ is the grading element of
  $\lieg$. In particular, for any representation $\Bbb V$
  corresponding to a harmonic curvature component of the geometry
  determined by $\lieg$, we have $\Bbb V_{st}(A)=\{0\}$.
\end{lemma}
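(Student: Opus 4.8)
The $\frak{sl}_2$-triple requirement on $Z$, $[Z,X]$, $X$ asks precisely that $[[Z,X],Z]=2Z$ and $[[Z,X],X]=-2X$. The plan is to observe that if one can arrange $[Z,X]=A$ for $A$ the grading element, then both relations are automatic, since $Z\in\lieg_2$ and $X\in\lieg_{-2}$ are eigenvectors of $\ad(A)$ with eigenvalues $2$ and $-2$. Thus the entire first assertion reduces to producing an $X\in\lieg_{-2}$ with $[Z,X]=A$; such an $X$ then lies in $T_{\lieg_-}(Z)$ automatically.

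First I would record that $\dim\lieg_{-2}=\dim\lieg_2=1$: the Killing form $B$ pairs $\lieg_2$ with $\lieg_{-2}$ nondegenerately, and $\dim\lieg_{-2}=1$ by the definition of a contact grading. Fix a generator $X_0$ of $\lieg_{-2}$ and put $H_0=[Z,X_0]\in\lieg_0$; the goal is to show $H_0$ is a nonzero multiple of $A$ and then rescale. To locate $H_0$, I would use that $\lieg_2$ is a one-dimensional $\lieg_0$-module, say of weight $\chi\in\lieg_0^*$, so $[H,Z]=\chi(H)Z$, and, since $\lieg_{-2}\cong\lieg_2^*$ as $\lieg_0$-modules under $B$, also $[H,X_0]=-\chi(H)X_0$ for all $H\in\lieg_0$; note $\chi(A)=2$. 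Invariance of $B$ then gives, for every $H\in\lieg_0$,
$$ B(H_0,H)=B([Z,X_0],H)=B(Z,[X_0,H])=\chi(H)\,B(Z,X_0). $$
A one-dimensional representation is trivial on $[\lieg_0,\lieg_0]$, so $\chi$ vanishes there; and since the center $\frak z(\lieg_0)$ and $[\lieg_0,\lieg_0]$ are $B$-orthogonal with $B$ nondegenerate on each summand, the displayed identity forces $H_0\in\frak z(\lieg_0)$. The one structural input I would invoke is that for a contact grading $\frak z(\lieg_0)$ is one-dimensional, spanned by $A$ (contact gradings are associated to a single simple/restricted root). Hence $H_0=\mu A$, and $\mu\neq0$ because $B(H_0,A)=\chi(A)B(Z,X_0)=2\,B(Z,X_0)\neq0$. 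Setting $X=\mu^{-1}X_0$ yields $[Z,X]=A$, finishing the first claim. I expect this identification $H_0\in\BR A$ to be the main obstacle: the nondegeneracy and orthogonality bookkeeping is routine, but the conclusion genuinely relies on the one-dimensionality of $\frak z(\lieg_0)$, which is the hallmark of contact gradings.

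For the final assertion I would show that $A=[Z,X]$ acts on every harmonic curvature module with strictly positive eigenvalues. A harmonic curvature component is an irreducible $\lieg_0$-submodule $\Bbb V$ of $\La^2\frak p_+\otimes\lieg$ (using $T^*M\cong\frak p_+$), and on a decomposable tensor built from $\lieg_a,\lieg_b\subset\frak p_+$ and $\lieg_c\subset\lieg$ the grading element acts by the scalar $a+b+c$, which is exactly the homogeneity degree of the corresponding curvature. Since $\Bbb V$ is $\lieg_0$-irreducible and $A$ is a semisimple central element, its eigenspaces are submodules, so $A$ acts by a single scalar equal to that homogeneity. For a regular normal parabolic geometry the (harmonic) curvature is homogeneous of degree $\geq1$, so this scalar is at least $1$; in particular no eigenspace has eigenvalue $\leq0$, which is precisely the statement $\Bbb V_{st}(A)=\{0\}$.
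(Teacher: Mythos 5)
Your reduction of the first assertion to producing $X\in\lieg_{-2}$ with $[Z,X]=A$ is correct, and the Killing form computation showing $H_0=[Z,X_0]\perp[\lieg_0,\lieg_0]$, hence $H_0\in\mathfrak{z}(\lieg_0)$, is sound. The genuine gap is your ``one structural input'': it is \emph{false} that $\mathfrak{z}(\lieg_0)$ is one-dimensional for contact gradings, and false that contact gradings are always given by crossing a single simple (restricted) root. The contact grading of $\frak{sl}(n+2,\BC)$, given by blocks of sizes $1,n,1$, corresponds to the two simple roots $\alpha_1,\alpha_{n+1}$, and $\mathfrak{z}(\lieg_0)$ is two-dimensional; the same failure occurs for the real forms that matter most here: for $\lieg=\frak{su}(p+1,q+1)$ --- the CR case, which is the principal application of this lemma in the paper --- one has $\lieg_0\cong\BC\oplus\frak{su}(p,q)$ with center $\BC$, two-dimensional over $\BR$ (and this happens even for $\frak{su}(n+1,1)$, where only one restricted root is crossed, so the parenthetical about restricted roots does not save the real case). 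Consequently ``$H_0$ is a nonzero element of $\mathfrak{z}(\lieg_0)$'' does not yield $H_0\in\BR A$, and your argument breaks down exactly in the geometry the lemma is used for.

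The step is repairable, because your identity $B(H_0,H)=\chi(H)\,B(Z,X_0)$ in fact determines $H_0$ completely as the $B$-dual of the functional $B(Z,X_0)\chi$; what is missing is the claim that $\chi$ is proportional to $B(A,\cdot)$ on $\mathfrak{z}(\lieg_0)$. That claim is true and is the actual hallmark of contact gradings: $\lieg_2$ is the root space of the (restricted) highest root $\theta$, and the grading element of a contact grading is, up to scale, the coroot of $\theta$, i.e.\ proportional to the $B$-dual $t_\theta$ of $\theta$ --- the contact grading is precisely the eigenvalue grading of $\ad$ of that coroot. Granting this, both $\chi$ and $B(A,\cdot)$ vanish on $[\lieg_0,\lieg_0]$ and agree up to a nonzero scalar on $\mathfrak{z}(\lieg_0)$, so $H_0=\mu A$ with $\mu\neq 0$, and rescaling finishes as you intended. (A sanity check in the paper's CR matrices: for $Z\in\lieg_2$ with entry $iz$ and $X_0\in\lieg_{-2}$ with entry $ix$, one computes $[Z,X_0]=-zx\cdot\diag(1,0,\dots,0,-1)$, a real multiple of $A$, even though the center is $\BC$.) Note that the paper itself does not argue this part directly but cites the proof of proposition 3.2.4 and section 3.2.10 of \cite{cap.slovak.book.vol1}, where contact gradings are constructed from the highest-root $\frak{sl}_2$, so the identification you need is built in there. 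Your proof of the final assertion --- regularity forces all homogeneities of harmonic curvature to be $\geq 1$, so all $A$-eigenvalues on $\Bbb V$ are positive and $\Bbb V_{st}(A)=\{0\}$ --- is correct and coincides with the paper's argument; the appeal to irreducibility is not even needed, since positivity of every occurring homogeneity already suffices.
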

\begin{proof}
  For the first part, see the proof of proposition 3.2.4 and section
  3.2.10 in \cite{cap.slovak.book.vol1}. The second part then follows
  since the curvature of a regular, normal, parabolic geometry is
  concentrated in positive homogeneities, hence so are all harmonic
  curvature components.
\end{proof}

The transition to geometry is also very simple in this case, thanks to the \textit{chains}, special distinguished curves defined for any parabolic contact structure; these generalize the well known chains for CR
structures, see \cite{csz.curves},
\cite{cz.chains}, and section 5.3.7 in
\cite{cap.slovak.book.vol1}.  Chains are uniquely
determined by their initial direction up to reparametrization and, from any point $x$, chains emanate in each direction transverse to the
contact hyperplane. 
%In particular, they fill a dense subset of a neighborhood of $x$.  
They are defined as distinguished curves of
the form $\pi(\exp(b,sX))$ with $X\in\lieg_{-2}$ and $b \in B$.  For a fixed
$b$, there is only one such curve, up to parametrization, but varying $b$ along a fiber in the
Cartan bundle yields all chains emanating from $\pi(b)$.

\begin{theorem}\label{thm.contact}
  Let $M$ be a smooth manifold endowed with a parabolic contact
  structure and let $\eta\in\alginf(M)$. Suppose that $\eta$ has a higher
  order fixed point at $x_0\in M$ such that the isotropy $\al\in
  T^*_{x_0}M$ vanishes on the contact subbundle $T^{-1}_{x_0}M$. Then
  the higher order fixed point $x_0$ is smoothly isolated, and there is an open set $U$ with $x_0 \in \overline{U}$ on which the geometry is locally flat.
\end{theorem}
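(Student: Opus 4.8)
The plan is to run the $\mathfrak{sl}_2$--machinery of Section \ref{sec.general} along a chain emanating from $x_0$, feeding it the single algebraic input of Lemma \ref{lem:contact}. Fix $b_0\in\pi^{-1}(x_0)$ and let $Z\in\liep_+$ represent the isotropy $\al$; the hypothesis that $\al$ annihilates $T^{-1}_{x_0}M$ says precisely that $Z\in\lieg_2$, and we may take $Z\neq 0$. By Lemma \ref{lem:contact} there is $X\in\lieg_{-2}\cap T_{\lieg_-}(Z)$ with $A=[Z,X]$ the grading element, so $(Z,A,X)$ is an $\frak{sl}_2$--triple and $\ad(A)$ acts on $\lieg_i$ by the scalar $i$. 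Since $X\in\lieg_{-2}$, the exponential curve $\sigma(s)=\pi(\exp(b_0,sX))$ produced by Proposition \ref{prop.sl2.triple} is a chain through $x_0$, transverse to the contact distribution, and $g(t)=e^{\log(1+st)A}$ is a holonomy path along it with attractor $b_0$.

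For the \emph{smoothly isolated} assertion I would show $C(\al)=\{0\}$ and invoke Proposition \ref{prop.attractor}(3). This is immediate from representation theory of the triple $(Z,A,X)$: the commutant $C_\lieg(Z)=\ker(\ad Z)$ is spanned by highest--weight vectors, each of nonnegative $\ad(A)$--eigenvalue, so $C_\lieg(Z)\subseteq\bigoplus_{i\geq 0}\lieg_i=\liep$. Hence $C(\al)$, the image of $C_\lieg(Z)$ in $\lieg/\liep\cong T_{x_0}M$, is trivial, and $x_0$ is smoothly isolated.

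The flatness statement is where the dynamics enter. Because $A$ is the grading element, $\ad(A)$ has eigenvalues $-1$ and $-2$ on $\lieg_-=\lieg_{-1}\oplus\lieg_{-2}$, all strictly negative; thus $\Ad(g(t))(Y)\to 0$ for every $Y\in\lieg_-$, and $C_\lieg(Z)\cap\lieg_-=\{0\}$ from the previous paragraph. Together with $\Bbb V_{ss}(A)=\{0\}$ for every harmonic curvature component $\Bbb V$ (indeed $\Bbb V_{st}(A)=\{0\}$, again by Lemma \ref{lem:contact}), these are exactly the hypotheses of Corollary \ref{cor:sl2-path}(3). Part (1) of that corollary already gives vanishing of each harmonic component along $\sigma$, in particular at $x_0$. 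Since the trivial limit $Y_\infty=0$ forces every attractor in the propagation step to be $b_0$ over $x_0$, the only vanishing--at--fixed--points input required by part (3) is vanishing at $x_0$, which part (1) supplies for free. Therefore each harmonic component vanishes on an open neighborhood of $\sigma(I\setminus\{0\})$, an open set $U$ with $x_0\in\overline U$; as $\ka_H$ is a complete obstruction, vanishing of $\ka_H$ on $U$ forces $\ka|_U=0$ and hence local flatness on $U$.

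I expect the main obstacle to be exactly this passage from the chain $\sigma$ to a full open set: the single--orbit conclusion of Corollary \ref{cor:sl2-path}(1) must be thickened transversally via Proposition \ref{prop.ext.holonomy}, which is legitimate precisely because every $\ad(A)$--eigenvalue on $\lieg_-$ is negative (so $\Ad(g(t))(Y)\to 0$) while every eigenvalue on each $\Bbb V$ is positive (so $\lambda_i(t)\to+\infty$, forcing vanishing at the thickened points through Proposition \ref{prop.curv.stab}(2)). The delicate point, in contrast to the general statement of Corollary \ref{cor:sl2-path}(3), is verifying that all attractors coincide with $b_0$, so that the entire argument rests only on vanishing at $x_0$.
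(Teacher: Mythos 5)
Your proposal is correct and follows the paper's proof in its essential structure: both use lemma \ref{lem:contact} to produce the chain $\sigma$ and the holonomy path $e^{\log(1+st)A}$ with $A$ the grading element, then invoke corollary \ref{cor:sl2-path} for vanishing of all harmonic curvature components on a neighborhood of $\sigma\setminus\{x_0\}$, and proposition \ref{prop.attractor}(3) for smooth isolation. The one place you genuinely diverge is the commutant computation. The paper shows $C_{\lieg_-}(Z)=0$ from nondegeneracy on $\lieg_{-1}$ of the form $(Y,Y')\mapsto B(Z,[Y,Y'])=B([Z,Y],Y')$, where $B$ is the Killing form---that is, it reuses exactly the nondegeneracy of the bracket $\lieg_{-1}\times\lieg_{-1}\to\lieg_{-2}$ that defines a contact grading. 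You instead prove the stronger statement $C_\lieg(Z)\subseteq\liep$ by $\mathfrak{sl}_2$--representation theory: $\ker(\ad Z)$ is spanned by highest weight vectors of the triple $(Z,A,X)$, and these have nonnegative $\ad(A)$--eigenvalue, hence lie in $\oplus_{i\geq 0}\lieg_i=\liep$ since $A$ is the grading element. Your version has the small advantage of addressing $C(\al)$ (by definition the image of $C_\lieg(Z)$, not of $C_{\lieg_-}(Z)$, in $\lieg/\liep$) directly, without tacitly passing through $\lieg_-$--representatives as in the simplifications discussed after proposition \ref{prop.attr.curv}; the paper's version is more elementary and ties the conclusion to the defining algebraic feature of contact gradings. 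Finally, your explicit verification that every attractor $\exp(b_0,Y_\infty)$ collapses to $b_0$---because all $\ad(A)$--eigenvalues on $\lieg_-$ are strictly negative, so $Y_\infty=0$, and hence the only vanishing input needed for corollary \ref{cor:sl2-path}(3) is vanishing at $x_0$, already supplied by part (1) since $\Bbb V_{st}(A)=\{0\}$---is sound, and simply makes explicit what the paper's bare citation of the corollary leaves implicit.
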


\begin{proof}
For all $b \in\pi^{-1}(x_0)$, the element $Z\in\frak p_+$
  corresponding to the isotropy $\al$ via $b$ lies in $\lieg_2$.  
  The element $X\in\lieg_{-2}\cap T_{\lieg_-}(Z)$ from lemma
  \ref{lem:contact} then gives rise to a distinguished curve $\sigma$, which is
  a chain. By corollary \ref{cor:sl2-path} any harmonic curvature
  component vanishes on a neighborhood $U$ of $\sigma \backslash \{ x_0 \}$, as desired.  
  
  It remains to show the centralizer $C_{\lieg_-} (Z)$ is trivial.  Denote by $B$ the Killing form of $\lieg$.  Then 
  $$ (Y,Y') \mapsto B(Z,[Y,Y']) = B([Z,Y],Y')$$
  is nondegenerate on $\lieg_{-1}$.  Thus $C_{\lieg_-}(Z) = 0 = C(\alpha)$, so $x_0$ is smoothly isolated by proposition \ref{prop.attractor} (3).
   \end{proof}
  
%  \begin{eqnarray*}
%  - Y & = & [[Z,X],Y] \\
%  & = & [[Z,Y],X] + [[Y,X],Z] \\
%  & = & [[Z,Y],X]
%  \end{eqnarray*}
% so it follows that $[Z,Y] \neq 0$ if $Y \in \lieg_{-1}$ is nonzero.   Of course, because $X$ spans $\lieg_{-2}$, the centralizer of $Z$ in $\lieg_{-2}$ is trivial.  
 
%  Varying $b \in \pi^{-1}(x_0)$ gives vanishing of the harmonic curvature along all chains through $x_0$. Because these curves fill
%  a dense subset of a neighborhood of $x_0$ in $M$, the result
%  follows.

\begin{remark}
In the proof above, varying $b \in \pi^{-1}(x_0)$ gives vanishing of the harmonic curvature along all chains through $x_0$.  We expect that these form a dense subset of a neighborhood of $x_0$, so that the geometry is flat on a neighborhood of $x_0$; however, we are not aware of a general proof of this density of chains.  

In the case of CR structures (see section \ref{sect:CR}), it is
possible to conclude flatness on a neighborhood of $x_0$ thanks to the
Fefferman construction (see, for example, \cite{koch.cr}).  Given
$\eta$ as above with $D_{x_0} \varphi^t_\eta \equiv \id$, there is a
conformal pseudo-Riemannian structure on a neighborhood $V$ of $x_0$
product $S^1$ and a lift $\hat{\eta}$ of $\eta$ to a conformal Killing
field.  The chains $\gamma_X$ preserved by $\varphi^t_\eta$ lift to
null geodesics preserved by $\varphi^t_{\hat{\eta}}$, and the
$S^1$-fiber over $x_0$ is a pointwise fixed null geodesic.  This
conformal flow is in fact strongly essential at each point of $\{ x_0
\} \times S^1$; more precisely, it is a \emph{null translation} (see
\cite[theorem 4.3]{fm.champsconfs} or \cite[proposition
  3.7]{cap.me.proj.conf}).

By the results cited above, $V \times S^1$ is conformally flat on a
set of the form
$$ \hat{U} = \bigcup_{x \in \Delta} \mathcal{C}(x)$$
where $\Delta$ is the fiber over $x_0$, and $\mathcal{C}(x)$ is the
null cone through $x$.  Now it is easy to see that the projection of
$\hat{U}$ to $M$ is a neighborhood of $x_0$, and that it is CR-flat by the
Fefferman correspondence.
\end{remark}

\subsection{Partially integrable almost CR structures}
\label{sect:CR}
As mentioned before, these are the most important examples of parabolic
contact structures. Let $M$ be a connected manifold of odd dimension, say $2n+1$, endowed with a contact distribution
$H=T^{-1}M\subset TM$.   An equivalent formulation of the contact condition is that $H$ is locally equal to the kernel of a one--form $\la$ such that $\la\wedge
(\wedge^n d\la)$ is nowhere vanishing. Another equivalent description
of the maximal non--integrability of $H$ is that for each point $x\in M$, the
skew symmetric bilinear map $\Cal L_x:H_x\x H_x\to T_xM/H_x$ induced
by the Lie bracket of vector fields is non--degenerate.  

An almost CR
structure on $M$ consists of a contact distribution $H$ as above, together with a complex structure on $H$---that is, a smooth
bundle map $J:H\to H$ such that $J^2=J\o J=-\id$. To obtain a
parabolic contact structure, $J$ has to satisfy the following compatibility condition, called \textit{partial
  integrability}: $\Cal L_x(J_x(\xi),J_x(\eta))=\Cal
L_x(\xi,\eta)$ for all $x\in M$ and $\xi,\eta\in H_x$.  Assuming this property, $\Cal L_x$ is the imaginary part of a Hermitian form $\Cal L_x^{\BC}$ on $H_x$, uniquely determined up to scale, with
values in $(T_xM/H_x)\otimes\BC$, which is called the Levi--form at
$x$. The signature $(p,q)$, where $p\geq q$, of $\mathcal{L}_x^{\BC}$ is well-defined and constant on $M$.  When the Levi--forms are
definite, the almost CR structure is called \textit{strictly
  pseudoconvex}.  Strictly pseudoconvex almost CR structures are infinitesimally modeled on $S^{2n+1} \subset \BC^{n+1}$.

The main source of almost CR manifolds are real
hypersurfaces in complex manifolds. Indeed, if $M\subset N$ is such a hypersurface, for $N$ a complex manifold, then for each $x\in
M$, define $H_x\subset T_xM\subset T_x N$ to be the maximal
complex subspace contained in $T_xM$, so $H_x=T_xM\cap iT_xM$. The spaces $H_x$ form a
smooth distribution of corank one on $M$, which is generically contact. Then $J$ is the restriction of the complex
structure on $T_xN$ to $H_x$. Integrability
of the complex structure on $N$ implies partial
integrability of the resulting almost CR structure on $M$; in fact,
it satisfies a stronger condition, called
\textit{integrability}, and hence is called a \textit{CR--structure}. 

The general formulation of integrability is as
follows.  For a partially integrable almost CR structure
$(M,H,J)$, for any two sections $\xi,\eta\in\Ga(H)$, the difference $[\xi,\eta]-[J(\xi),J(\eta)]$ lies in $\Ga(H)$. Now define an
analog of the Nijenhuis tensor 
\begin{eqnarray*}
N & : & \Ga(H)\x\Ga(H)\to\Ga(H)  \\ 
& & (\xi,\eta)\mapsto [\xi,\eta]-[J(\xi),J(\eta)]+J([J(\xi),\eta]+[\xi,J(\eta)])
\end{eqnarray*}

It is bilinear over smooth functions and thus indeed induces a tensor
$N\in \Ga(\La^2H^*\otimes H)$.  A partially integrable almost CR
structure is \emph{integrable} if $N \equiv 0$.  Note that by
construction, the CR Nijenhuis tensor $N$ is of type $(0,2)$,
conjugate linear in both variables, meaning $N(J(\xi), \eta) = N(\xi,
J(\eta)) = -J(N(\xi,\eta))$.

The description of partially integrable almost CR structures of
signature $(p,q)$ as parabolic geometries uses the group
$G=PSU(p+1,q+1)$ and the parabolic subgroup $P< G$
defined as the stabilizer of a point in the projective space ${\bf CP}^{p+q+1}$ corresponding to an isotropic complex line in
$\BC^{p+1,q+1}$. Thus the homogeneous space $G/P$ can be identified
with the complex projectivization of the light cone in
$\BC^{p+1,q+1}$.  For more details, see section 4.2.4 of the book
\cite{cap.slovak.book.vol1}.

The $|2|$--grading on the Lie algebra $\frak{su}(p+1,q+1)$ determined by $P$ is given
by splitting matrices into blocks with respect to a basis starting
with a vector $v$ in the isotropic line stabilized by $P$, ending with a
complementary isotropic vector $w$, and having an orthonormal basis of $\mbox{span} \{ v,w \}^\perp$ in the middle.  These
matrices have the form
$$
\left\{
\begin{pmatrix} a & Z & iz \\ X & A-\tfrac{a-\bar a}{p+q}\id & -\Bbb I Z^*\\
  ix & -X^*\Bbb I & -\bar a \end{pmatrix} 
  \ : \ 
  \begin{array}{l}
a\in\BC \\
x,z\in\BR \\
X\in\BC^n, Z\in\BC^{n*} \\
A\in\frak{su}(p,q)
\end{array}
\right\}
$$
where $\Bbb I$ is the diagonal matrix $ \id_p \oplus (- \id_q)$. In the grading, $\lieg_{-2}$ is the subspace corresponding to $ix$,
$\lieg_{-1}$ to $X$, and so on. In particular, $\lieg_-$ and $\liep_+$
both are complex Heisenberg algebras of signature $(p,q)$, which
reflects the CR structure. The group $G_0$ is the conformal unitary
group of the Hermitian form of signature $(p,q)$ on $\lieg_{\pm 1}$
with the natural extension of the action to $\lieg_{\pm 2} \cong [\lieg_{\pm 1}, \lieg_{\pm 1}]$, and $P$
is $G_0 \ltimes \mbox{Heis}_{p,q}$.

For $Z \in \lieg_1$ and $X \in \lieg_{-1}$, the bracket $[Z,X] = (ZX,\Bbb{I} Z^* X^* \Bbb{I} - XZ) \in \lieg_0 \cong \BC \oplus \mathfrak{su}(p,q)$.  The further brackets we will want are
\begin{eqnarray*}
\ \ [[Z,X],X]  & = & - 2ZXX + X^* \Bbb{I} X \Bbb{I} Z^*   \\
\ \ [[Z,X],Z] & = & 2 ZXZ - Z \Bbb{I} Z^* X^* \Bbb{I}
 \end{eqnarray*}

Next we find the decomposition of
$\liep_+$ into $P$--orbits. The nonzero elements of $\lieg_2$ form one
$P$--orbit. Non--degeneracy of the bracket
$\lieg_1\x\lieg_1\to\lieg_2$ implies that any element in
$\liep_+$ with non--zero component in $\lieg_1$ is conjugate into $\lieg_1$ by an element of $P$.  On $\lieg_1$, the $P$--action
reduces to the standard action of the conformal unitary group, so
there are three $P$--orbits of elements not contained in $\lieg_2$,
corresponding to the sign of the inner product of the projection of
the element to $\liep_+/\lieg_2\cong\BC^{p,q}$.  The main
difference for our results is between null and non--null elements. We will refer to the isotropies at higher order fix points corresponding to these two cases as
\textit{null}, or \textit{non--null},  \emph{transversal
  isotropy}, respectively. The case of isotropy in $\lieg_2$ was already treated
in section \ref{sect:contact}. 

To formulate our results, it remains to describe the harmonic
curvature.  CR structures have one harmonic torsion and one harmonic
curvature. The harmonic torsion is the CR Nijenhuis tensor $N$ defined
above, and its vanishing, which by definition is equivalent to
integrability, is also equivalent to torsion freeness of the parabolic
geometry.  The harmonic curvature $\rho$ is a section of
$\La^2H^*\otimes H^*\otimes H$; it is a partially defined two--form of
type $(1,1)$ with values in skew--Hermitian endomorphisms of the
contact subbundle $H$.  It is totally trace free and has some
additional symmetries, which will not be relevant for our purpose
below.

Propositions \ref{prop.nonnull.cr} and
\ref{prop.null.cr} give a fairly detailed description of the flow around
a higher order fixed point with non--null or null isotropy, respectively.  Together with proposition \ref{thm.contact},
these imply:

\begin{theorem}
\label{thm.cr}
Let $M$ be a manifold with a partially integrable almost CR structure
and let $\eta \in \alginf(M)$ vanish to higher
order at $x_0 \in M$ with isotropy $\al\in T^*_{x_0}M$. Then 
\begin{enumerate}
\item The harmonic torsion vanishes at $x_0$.  If the structure is integrable, then the harmonic curvature vanishes at $x_0$.

\item If $\al$ annihilates $H_{x_0}\subset T_{x_0}M$ or if the
projection of $\al$ to $H^*_{x_0}$ is non--null, then there is an open
set $U \subset M$ with $x_0 \in \overline{U}$ on which the restriction
of the almost CR structure is flat; in particular, this conclusion holds if $M$ is strictly pseudoconvex.
\end{enumerate}
\end{theorem}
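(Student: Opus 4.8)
The plan is to split the theorem according to the $P$--orbit of the isotropy in $\liep_+$, which was decomposed above into the orbit of nonzero elements of $\lieg_2$ together with the three orbits meeting $\lieg_1$ (one null, two non--null). Since any element of $\liep_+$ with nonzero $\lieg_1$--component is $P$--conjugate into $\lieg_1$, a suitable choice of $b_0\in\pi^{-1}(x_0)$ lets me assume the element $Z$ associated to $\al$ lies either in $\lieg_2$ or in $\lieg_1$, and in the latter case the null/non--null dichotomy is read off from the Hermitian form on $\lieg_1\cong H^*_{x_0}$. For strictly pseudoconvex $M$ this form is definite and has no nonzero null vectors, so every isotropy is of the $\lieg_2$ type or non--null; thus the final clause of part (2) is a formal consequence of the other cases.

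The case $Z\in\lieg_2$ (equivalently $\al$ annihilates $H_{x_0}$) is exactly the hypothesis of Theorem \ref{thm.contact}, which already produces an open $U$ with $x_0\in\overline U$ on which the full Cartan curvature, hence both harmonic components, vanishes; by continuity they then vanish at $x_0$ as well, settling part (2) and part (1) here with no integrability hypothesis. For $Z\in\lieg_1$ the engine is the $\frak{sl}_2$--machinery of Section \ref{sec.general}: using the brackets recorded above I would exhibit $X\in T_{\lieg_-}(Z)\cap\lieg_{-1}$, so $A=[Z,X]\in\lieg_0$ automatically and the distinguished curve $\sigma$ of Proposition \ref{prop.sl2.triple} is tangent to the contact distribution. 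In the non--null case, normalizing $X$ to a real multiple of $\Bbb I Z^*$ makes $A$ a positive multiple of the grading element, so $\ad(A)$ has strictly negative eigenvalues on all of $\lieg_-$ and strictly positive eigenvalues on both harmonic representations $\Bbb V$ and $\Bbb U$; hence $\Bbb V_{st}(A)=\Bbb U_{st}(A)=\{0\}$, while $C_{\lieg_-}(Z)=\{0\}$ and $C(\al)=\{0\}$ because $\ker\ad(Z)\subset\liep$ for this triple. Corollary \ref{cor:sl2-path}(1) then kills both components along $\sigma$, Proposition \ref{prop.attractor}(3) shows $x_0$ is smoothly isolated, and Corollary \ref{cor:sl2-path}(3) propagates the vanishing to an open neighborhood of $\sigma\setminus\{x_0\}$, giving flatness on an open $U$ with $x_0\in\overline U$; part (1) in this case again follows by continuity.

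The null case is where the argument genuinely degrades, and it is responsible for the conditional form of part (1) and for the exclusion of null isotropy from part (2). Here I would take $X\in\lieg_{-1}$ itself null with $ZX=1$, so that $A=[Z,X]$ acquires a nontrivial semisimple part in $\frak{su}(p,q)$ and is no longer a multiple of the grading element; consequently $\ad(A)$ has some vanishing eigenvalues on $\lieg_-$, the strongly stable subspaces $\Bbb V_{ss}(A)$, $\Bbb U_{ss}(A)$ need not be trivial, and no open flat set can be extracted. To prove part (1) I would instead invoke Proposition \ref{prop.attr.curv}, which gives $N_{x_0}\in\Bbb V_{st}(A)$ and, when the structure is integrable, $\rho_{x_0}\in\Bbb U_{st}(A)$, for every admissible null completion $X$; varying $X$ over the family of null lines transverse to the relevant hyperplane and intersecting the resulting subspaces should collapse them to zero, exactly as the intersection of the images $\im(\xi)$ forced $\tau(x_0)=0$ in Proposition \ref{prop.rank1.grassmannian}(3). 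The supplementary input is the relation $Z\cdot\ka(b_0)=0$ coming from $\Cal L_{\tilde\eta}\ka=0$ (see section 2.3 of \cite{cap.srni04}) and the fact that $N$ has type $(0,2)$; integrability, i.e. $N\equiv 0$, is precisely what removes the torsion's contribution and is needed to conclude $\rho_{x_0}=0$.

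The main obstacle I anticipate is entirely in the null case: first, verifying that a null $\frak{sl}_2$--completion $X\in\lieg_{-1}$ exists and writing down the eigenspace decomposition of $\ad(A)$ on $\Bbb V$ and $\Bbb U$ together with its zero eigenvalues explicitly; and second, checking that the intersection over all admissible $X$ of the subspaces $\Bbb V_{st}(A)$ (respectively $\Bbb U_{st}(A)$, under integrability) is trivial. This bookkeeping is parallel to, but heavier than, Lemmas \ref{lem:Grass-two-alg} and \ref{lem:Grass-one-alg}, and it is exactly the step at which integrability must enter for the harmonic curvature statement.
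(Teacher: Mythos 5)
Your proposal follows essentially the same route as the paper: the $\lieg_2$ case is delegated to theorem \ref{thm.contact}, the non--null case runs through the unique $\frak{sl}_2$--completion $X_0\in\BR\cdot\Bbb IZ^*$ with $A$ twice the grading element (the paper's lemma \ref{lemma.cr.nonnull} and proposition \ref{prop.nonnull.cr}), and the null case uses proposition \ref{prop.attr.curv} with $X$ varying over $T_{\lieg_-}(Z)$ plus the relation $Z\cdot\ka(b_0)=0$ and torsion--freeness, exactly as in lemma \ref{lemma.cr.null} and proposition \ref{prop.null.cr}. The one refinement the paper makes to the step you flag as the main obstacle: for the harmonic curvature one does not show $\bigcap_X \Bbb U_{st}(A)=\{0\}$ outright, but rather that $\Phi\in\Bbb U_{st}(A)$ together with the constraint $\Phi(Y_1,Y_2).Z=0$ forces $\Phi(Y_1,Y_2)=0$ for $Y_1\in\BC X$, and then varies $X$ to span $\lieg_{-1}$ --- which is precisely the combination of ingredients your ``supplementary input'' already anticipates.
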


In the case that the CR structure on $M$ is real analytic and integrable,
Beloshapka \cite{beloshapka.str.ess.cr} and Loboda \cite{loboda.str.ess.cr}
proved that existence of nontrivial $\eta \in \mathfrak{inf}(M)$ vanishing
to higher order implies that M is flat.  Their proof involves calculations
with Taylor series in Moser normal coordinates.

\subsubsection{Non--null transverse isotropy}\label{sec.cr.nonnull}

As before, we start by collecting the algebraic background needed to
treat this case.
\begin{lemma}\label{lemma.cr.nonnull}
  Let $Z\in\lieg_1$ be non--isotropic, so $Z\Bbb
  IZ^*\neq 0$. Then 
  \begin{enumerate}
\item The sets associated to $Z$ are 
\begin{gather*}
 \{0\}=C_{\lieg_-}(Z)\subset F_{\lieg_-}(Z)=\{X\in\lieg_{-1}:ZX=X^*\Bbb
IX=0\}  \\
T_{\lieg_-}(Z) = \{ \tfrac{2}{Z\Bbb IZ^*}\Bbb IZ^*\}
\end{gather*}
 
 \item Let $X_0 = \frac{2}{Z\Bbb IZ^*}\Bbb IZ^*$, and let $A=[Z,X_0]$.  Then $A$ belongs to $\lieg_0$.  All eigenvalues of $A$ on $\lieg_-$ are
negative, and for any representation $\Bbb V$ corresponding to a
harmonic curvature or torsion component, $\Bbb V_{st}(A)=0$.
\end{enumerate}
\end{lemma}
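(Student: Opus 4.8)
The plan is to carry out everything explicitly in the matrix realization of $\frak{su}(p+1,q+1)$, reducing each assertion to linear algebra on the vectors $X\in\BC^n\cong\lieg_{-1}$ and $Z\in\BC^{n*}\cong\lieg_1$, and using throughout the hypothesis $N:=Z\Bbb IZ^*\neq0$, which is a nonzero \emph{real} number since $\Bbb I^*=\Bbb I$. The first reduction is to show that every element of the three sets has vanishing $\lieg_{-2}$-component. Writing $X=X_{-1}+X_{-2}$, a direct bracket computation shows that $[Z,X_{-2}]\in\lieg_{-1}$ is a nonzero multiple of $\Bbb IZ^*$ whenever $X_{-2}\neq0$, and that $[\Bbb IZ^*,Z]$ has nonzero $\lieg_0$-part (its $\BC$-entry is $N$). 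Feeding this into the defining conditions --- $\ad_X(Z)\in\liep$ for $C_{\lieg_-}(Z)$ and $F_{\lieg_-}(Z)$, and the relation $[[Z,X],Z]=2Z$ for $T_{\lieg_-}(Z)$ --- forces $X_{-2}=0$ in each case. Once this is known, $A=[Z,X]\in[\lieg_1,\lieg_{-1}]\subset\lieg_0$ automatically, which already settles the first claim of part (2).

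For $C_{\lieg_-}(Z)$ and $F_{\lieg_-}(Z)$ I would then solve the defining equations on $\lieg_{-1}$. The condition $[Z,X]=0$ reads $ZX=0$ together with $XZ=\Bbb IZ^*X^*\Bbb I$; multiplying the matrix equation on the right by $\Bbb IZ^*$ and using $X^*Z^*=(ZX)^*=0$ collapses it to $N\cdot X=0$, whence $X=0$ and $C_{\lieg_-}(Z)=\{0\}$. For $F_{\lieg_-}(Z)$ one uses $\ad_X^2(Z)=[[Z,X],X]=-2(ZX)X+(X^*\Bbb IX)\Bbb IZ^*\in\lieg_{-1}$, which must vanish. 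Putting $c=ZX$ and $d=X^*\Bbb IX\in\BR$, the equation $2cX=d\,\Bbb IZ^*$ forces either $c=0$ (hence $d=0$) or $X\in\BC\cdot\Bbb IZ^*$; substituting $X=\mu\,\Bbb IZ^*$ in the latter case yields $2\mu^2=|\mu|^2$, impossible over $\BC$ unless $\mu=0$. So $c=d=0$, giving exactly $F_{\lieg_-}(Z)=\{X:ZX=X^*\Bbb IX=0\}$.

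For $T_{\lieg_-}(Z)$ I would first verify by substitution that $X_0=\tfrac2N\Bbb IZ^*$ gives $ZX_0=2$ and $X_0^*\Bbb IX_0=\tfrac4N$, and hence satisfies both $\frak{sl}_2$-relations $[[Z,X_0],Z]=2Z$ and $[[Z,X_0],X_0]=-2X_0$. Uniqueness is the crux. From $[[Z,X],Z]=2Z$, i.e.\ $2(ZX-1)Z=N\,X^*\Bbb I$, taking the conjugate transpose and using $\Bbb I^{-1}=\Bbb I$ shows every solution has the form $X=\nu\,\Bbb IZ^*$; then $c=ZX=\nu N$ turns the relation into the single scalar equation $c=2\bar c-2$, whose only solution is $c=2$, pinning down $\nu=\tfrac2N$ and $X=X_0$. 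This conjugate-transpose step --- which converts the matrix $\frak{sl}_2$-equation into a $\BC$-linear relation between $c$ and $\bar c$ --- is the step I expect to be the main obstacle, since it is precisely where the complex (rather than real) structure and the exact numerical coefficients in the bracket formulas are used; the same mechanism is what excludes the degenerate collinear cases above.

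Finally, for part (2) one computes $A=[Z,X_0]$ and finds its $\frak{su}(p,q)$-component vanishes (the terms $\Bbb IZ^*X_0^*\Bbb I$ and $X_0Z$ coincide) while its $\BC$-component is $ZX_0=2\in\BR$. Thus $A$ lies on the real line through the grading element in the center of $\lieg_0$; comparing the action on $\lieg_1$ shows $A$ equals twice the grading element, so $\ad(A)$ acts on $\lieg_i$ by the scalar $2i$. In particular the eigenvalues on $\lieg_-=\lieg_{-1}\oplus\lieg_{-2}$ are $-2$ and $-4$, all negative. The vanishing $\Bbb V_{st}(A)=0$ then follows exactly as in Lemma \ref{lem:contact}: the curvature of a regular normal parabolic geometry is concentrated in positive homogeneities, so any harmonic component representation carries only positive grading-element eigenvalues, hence only positive $A$-eigenvalues, leaving no eigenspace with eigenvalue $\leq0$.
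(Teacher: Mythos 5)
Your proposal is correct and follows essentially the same route as the paper's proof: reduce all three sets to $\lieg_{-1}$ by grading considerations, solve the bracket equations $[Z,X]=0$, $\ad_X^2(Z)=0$, and $[[Z,X],Z]=2Z$ explicitly (with the same dichotomy between $X$ collinear with $\Bbb IZ^*$ and the case $ZX=X^*\Bbb IX=0$, and the same $2\mu^2=|\mu|^2$ exclusion), identify $A=[Z,X_0]=\mathrm{diag}(2,0,\dots,0,-2)$ as twice the grading element, and conclude $\Bbb V_{st}(A)=0$ from positivity of the homogeneities of harmonic curvature as in lemma \ref{lem:contact}. The only deviations are micro-level: your collapse of $[Z,X]=0$ to $N\cdot X=0$ by right-multiplying by $\Bbb IZ^*$, and your explicit conjugate-transpose step yielding $c=2\bar c-2$ in the uniqueness argument for $T_{\lieg_-}(Z)$, which handles the complex conjugation that the paper's display ($X^*\Bbb I=\la Z$, implicitly taking $\la$ real) passes over silently---if anything, your treatment is the more careful one at that point.
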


\begin{proof}
  For any non--zero $ix\in\lieg_{-2}$ the map $W\mapsto
  [ix,W]$ is an injection $\lieg_1\to\lieg_{-1}$.  Because $Z \in \lieg_1$, the sets $F_{\lieg_-}(Z)$ and $C_{\lieg_-}(Z)$ are in $\lieg_{-1}$. Now let $X\in\lieg_{-1}$.  If $[Z,X]=0$, then $ZX=0$.  From $[[Z,X],X] = 0$, it follows that $\Bbb
  IZ^*$ and $X$ are linearly dependent. But if $X=\la\Bbb IZ^*$, then $ZX=\la Z\Bbb IZ^*$.  Since $Z$ is non--isotropic, $\la=0$, so $C_{\lieg_-}(Z)=0$.

For $X\in\lieg_{-1}$, grading considerations
imply $X\in F_{\lieg_-}(Z)$ if and only if $\ad(X)^2(Z)=0$.  If $X$ and
$\Bbb IZ^*$ are linearly independent, then $[[Z,X],X]$ vanishes if and only if
$ZX=\langle X,X\rangle=0$. If, on the other hand, $X=\la\Bbb IZ^*$ for $\la\in\Bbb C$,
vanishing of $[[Z,X],X]$ is equivalent to $|\la|^2=2\la^2$, which is
satisfied only for $\la=0$.  The description of $F_{\lieg_-}(Z)$
now follows.

Now suppose $X\in T_{\lieg_-}(Z)$.  Then $[[Z,X],Z] = 2Z$ implies $X^*\Bbb I=\la Z$, and 
$$2 \lambda Z \Bbb{I} Z^* - \lambda Z \Bbb{I} Z^* = 2ZX-\la
  Z\Bbb IZ^*=2$$
    The only possibility is thus $\lambda = \frac{2}{Z \Bbb{I} Z^*}$.  It is easy to verify that
  for $X_0 = \frac{2}{Z \Bbb{I} Z^*} \Bbb{I} Z^*$, we have $[[Z,X_0],X_0]=-2X_0$, so $T_{\lieg_-}(Z)=\{X_0\}$, as claimed. 
  
  For (2), compute $ZX_0=2$ and $\Bbb IZ^*X_0^*\Bbb I-X_0Z=0$, whence
  $A=[Z,X_0]$ is the block diagonal matrix with entries $(2,0, \ldots, 0,-2),$
  which is twice the grading element. The statement on $\Bbb
  V_{st}(A)$ then immediately follows from homogeneity considerations
  as in the proof of lemma \ref{lem:contact}.
\end{proof}

Here is the resulting proposition for non--null transverse isotropy.

\begin{proposition}
\label{prop.nonnull.cr}
Let $M$ be a manifold endowed with a partially
integrable almost CR structure and let $\eta \in \alginf(M)$ vanish to higher order at $x_0 \in
M$. Assume further that the isotropy $\al\in T_{x_0}^*M$ of $\eta$ restricts to a 
non--null element of $(H_{x_0})^*$.  Then
\begin{enumerate}
\item The sets $C(\alpha) \subset F(\alpha)$ in $T_{x_0} M$ are  
$$\{ 0 \} = C(\alpha) \subset F(\al)=\{\xi\in H_{x_0}:\Cal L_{x_0}(\xi,J(\xi))=0, \ \al(\xi)=0\} $$ 
The real line spanned by $T(\alpha)$ consists of all elements dual to real multiples of $\left. \alpha \right|_{H_{x_0}}$.

\item Let $b_0 \in \pi^{-1}(x_0)$ be a point such that $\alpha$ corresponds via $b_0$ to an element of $\lieg_1$.  Then in the normal coordinate chart determined by $b_0$, there is an open neighborhood $U$ of $0$ such that:
\begin{enumerate}

\item The higher order fixed point $x_0$ is smoothly isolated in the strongly fixed set.  Elements of $U\cap F(\al)$ are zeros of $\eta$.

\item Let
$$\ell = \{ \xi \in  H_{x_0}  \ : \ \ker(\Cal L_{x_0}(\xi, J(\cdot) ))=\ker(\al)\cap H_{x_0} \}$$
 Then $\ell$ is a line, and on $\ell \cap U$, the flow acts by $\xi\mapsto
\tfrac{1}{1+t\al(\xi)}\xi$ whenever $t \alpha(\xi) > 0$.
\end{enumerate}
\item There is an open neighborhood of
$\ell\setminus\{0\}$ in $U$ on which the geometry is locally
flat.  
\end{enumerate}
\end{proposition}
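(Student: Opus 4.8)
The plan is to follow the template of the proof of Proposition~\ref{prop.rank2.grassmannian}, translating the purely algebraic content of Lemma~\ref{lemma.cr.nonnull} into statements about the flow and its fixed set, and then feeding the two eigenvalue computations in part~(2) of that lemma into Proposition~\ref{prop.attr.curv} and Corollary~\ref{cor:sl2-path}. I would fix $b_0\in\pi^{-1}(x_0)$ as in part~(2) of the proposition, so that $\al$ corresponds to a non-isotropic $Z\in\lieg_1$, and use the identifications $T_{x_0}M\cong\lieg_-$ and $H_{x_0}\cong\lieg_{-1}$ coming from $\omega_{b_0}$. Part~(1) is then the geometric reinterpretation of Lemma~\ref{lemma.cr.nonnull}(1): the conditions $ZX=0$ and $X^*\Bbb IX=0$ defining $F_{\lieg_-}(Z)$ become $\al(\xi)=0$ (with $\al(J(\xi))=0$) and $\Cal L_{x_0}(\xi,J(\xi))=0$; the element $X_0=\tfrac{2}{Z\Bbb IZ^*}\Bbb IZ^*$ is the Levi-dual of $Z$, so its real span is the line $\ell$ described in part~(2)(b); and $C_{\lieg_-}(Z)=\{0\}$ gives $C(\al)=\{0\}$. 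For this last point I would note that $C_\lieg(Z)=\ker(\ad Z)$ is a graded subspace, since $\ad Z$ raises degree by one, so its image in $\lieg/\liep$ is exactly $C_{\lieg_-}(Z)$; the corresponding statement for $F(\al)$ is the $|2|$--graded analog of Proposition~\ref{prop.1-graded}, namely that every $X\in F_\lieg(Z)$ is congruent mod $\liep$ to its $\lieg_-$-component, which I would verify directly from the bracket formulas for the CR grading displayed above.

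With part~(1) in hand, part~(2) follows from the general machinery. Since $C(\al)=\{0\}$, Proposition~\ref{prop.attractor}(3) makes $x_0$ smoothly isolated, and Proposition~\ref{prop.attractor}(1) shows that each direction in $F(\al)$ issues a distinguished curve of zeros of $\eta$, so $F(\al)\cap U$ consists of zeros; this gives (2)(a). For (2)(b) I would apply Proposition~\ref{prop.sl2.triple} to the $\frak{sl}_2$--triple $Z,A,X_0$ with $X_0\in T_{\lieg_-}(Z)$: the resulting curve $\sigma(s)=\pi(\exp(b_0,sX_0))$ is distinguished, its normal-coordinate image is the line $\{sX_0\}=\ell$, and the formula $\ph^t_\eta(\sigma(s))=\sigma(s/(1+st))$ rewrites as $\xi\mapsto\tfrac{1}{1+t\al(\xi)}\xi$ once $s$ is identified with $\al(\xi)$ under the Killing-form pairing, which fixes the normalization of $X_0$.

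The heart of the argument is part~(3), where Lemma~\ref{lemma.cr.nonnull}(2) does the essential work: for $X_0\in T_{\lieg_-}(Z)$ the commutator $A=[Z,X_0]$ lies in $\lieg_0$, all its eigenvalues on $\lieg_-$ are negative, and $\Bbb V_{st}(A)=\{0\}$ for \emph{every} representation $\Bbb V$ attached to a harmonic curvature or torsion component. I would first invoke Proposition~\ref{prop.attr.curv}: since the harmonic torsion and harmonic curvature are invariant sections of completely reducible bundles on which $A$ acts diagonalizably, their values at $b_0$ lie in $\Bbb V_{st}(A)=\{0\}$, so both vanish at $x_0$. The identical argument applies at every higher order fixed point with non-null transverse isotropy, since such a point is governed by the same Lemma~\ref{lemma.cr.nonnull}(2); this supplies the hypothesis of Corollary~\ref{cor:sl2-path}(3). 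The eigenvalue conditions that corollary requires hold, as all eigenvalues of $A$ on $\lieg_-$ are negative and the zero-eigenspace is $C_{\lieg_-}(Z)=\{0\}$, so the harmonic torsion and curvature vanish on an open neighborhood of $\ell\setminus\{0\}$. Since both harmonic components vanish there, the general theory, by which vanishing of the full harmonic curvature on an open set forces $\ka=0$, yields local flatness, proving part~(3).

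The main obstacle I anticipate is bookkeeping rather than conceptual. The delicate points are making the translation in part~(1) rigorous in the $|2|$--graded setting, i.e.\ confirming that $F_{\lieg_-}(Z)$, $C_{\lieg_-}(Z)$, and $T_{\lieg_-}(Z)$ genuinely compute $F(\al)$, $C(\al)$, $T(\al)$, and checking that the strongly stable dynamics collapses every nearby attractor onto $x_0$: concretely, that $\Ad(g(t))Y\to 0$ for $Y\in\lieg_-$ because all eigenvalues of $A$ on $\lieg_-$ are negative, so that the attractor appearing in Corollary~\ref{cor:sl2-path}(3) is $b_0$ itself and vanishing of the harmonic invariants at $x_0$ alone is what drives the conclusion.
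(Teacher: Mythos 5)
Your proposal is correct and follows essentially the same route as the paper, whose own proof is just a terse reduction to Lemma~\ref{lemma.cr.nonnull} together with propositions \ref{prop.attractor}, \ref{prop.sl2.triple}, \ref{prop.attr.curv}, \ref{prop.ext.holonomy} and corollary \ref{cor:sl2-path} ``with analogous simplifications as in proposition \ref{prop.simplifications}.'' In fact you supply details the paper leaves implicit---the graded-kernel argument showing $C_\lieg(Z)$ projects onto $C_{\lieg_-}(Z)$, the direct $|2|$--graded verification for $F_\lieg(Z)$, and the observation that strictly negative eigenvalues of $A$ on $\lieg_-$ force every attractor in proposition \ref{prop.ext.holonomy} to be $b_0$ itself, so vanishing at $x_0$ alone propagates to a neighborhood of $\ell\setminus\{0\}$---all consistent with the paper's intended argument.
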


\begin{proof}
We fix $b_0$ as in (2) and let $Z\in\lieg_1$ be the element
corresponding to $\al$ via $b_0$. Then $C_{\lieg_-}(Z)=\{0\}$ by lemma \ref{lemma.cr.nonnull} and as in the proof of proposition
\ref{prop.1-graded}, this implies that $C(\alpha) = 0$. 
%so part (3) of proposition \ref{prop.attractor} implies (1).
Next, lemma \ref{lemma.cr.nonnull} implies the description of $F(\al)$,
and it shows that $T(\al)$ consists of the unique element
$\xi_0\in\ell$ such that $\al(\xi_0)=2$, so (1) is proved. Then (2) follows immediately
from propositions \ref{prop.attractor}, \ref{prop.sl2.triple} (with
analogous simplifications as in proposition
\ref{prop.simplifications}) and \ref{prop.ext.holonomy}.
\end{proof}

\subsubsection{Null transverse isotropy}\label{sec.cr.null}
In this case, we cannot deduce a local flatness result, so we will
focus on proving vanishing of the curvature on a complex curve of
higher order fixed points through $x_0$ and providing the algebraic
background for a more detailed analysis. It is possible to deduce
restrictions on torsion and curvature outside of the strongly fixed
set as was done for almost Grassmannian structures in part (4) of
proposition \ref{prop.rank1.grassmannian}.  The CR statements are
rather complicated to formulate, so we do not prove them in detail
here.

\begin{lemma}\label{lemma.cr.null}
  Let $Z\in\lieg_1$ be isotropic, so $Z\Bbb
  IZ^*=0$. Then 
\begin{enumerate}
\item The sets associated to $Z$ are
\begin{gather*}
\BC \cdot\Bbb IZ^*=C_{\lieg_-}(Z)\subset F_{\lieg_-}(Z)=
\{X\in\lieg_{-1}:ZX=X^*\Bbb IX=0\} \\
T_{\lieg_-}(Z)=\{X\in\lieg_{-1}:ZX=1, \ X^*\Bbb IX=0\}
\end{gather*}

\item For $X\in T_{\lieg_-}(Z)$, let $A=[Z,X] \in \lieg_0$. Then $A$ has
non--positive eigenvalues on $\lieg_-$, and the $0$-eigenspace equals $C_{\lieg_-}(Z)$. 
%%  Let $\Bbb T = \Lambda^2 \liep_+ \otimes \lieg$ be the representation associated to the Cartan curvature, and suppose $\Phi \in \Bbb T_{st}$ satisfies $[\Phi(Y_1,Y_2),Z] = 0$ for all $Y_1,Y_2 \in \lieg_-$.  Then $\Phi(Y_1,Y_2) = 0$ whenever $Y_1 \in \lieg_{-2} \oplus \BC X$.
For $\Bbb V\subset \La^{(0,2)}\lieg_1\otimes\lieg_{-1}$ the representation corresponding to the harmonic torsion, $\Bbb V_{st}(A)\subset
\La^2\lieg_1\otimes X^\perp$, 
and 
$$ \Bbb V_{ss}(A)\subset (\BC\cdot \Bbb IX^*)\wedge (\ker(X) \cap
Z^\perp)\otimes\BC X $$
Let $\Bbb U\subset
\La^{(1,1)}\lieg_1\otimes\lieg_0$ be the representation corresponding to the harmonic
curvature, and let $\Phi\in \Bbb U_{st}(A)$.  If 
$$\Phi(Y_1,Y_2).Z=0 \qquad \forall \ Y_1,Y_2\in\lieg_{-1}$$
then $\Phi(Y_1,Y_2)=0$ whenever $Y_1\in\BC X$ and $Y_2\in\lieg_{-1}$.
\end{enumerate}
\end{lemma}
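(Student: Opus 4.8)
The plan is to argue entirely in the matrix model, using the bracket identities $[[Z,X],X]=-2(ZX)X+(X^*\Bbb IX)\Bbb IZ^*$ and $[[Z,X],Z]=2(ZX)Z-Z\Bbb IZ^*X^*\Bbb I$ recorded before the lemma, together with the isotropy relation $Z\Bbb IZ^*=0$. First I would reduce all three sets to $\lieg_{-1}$: writing $X=X_{-1}+X_{-2}$ with $X_{-2}\in\lieg_{-2}$, the $\lieg_{-1}$--component $[X_{-2},Z]$ of $\ad_X(Z)$ is the image of $Z$ under the injection $W\mapsto[X_{-2},W]$ and cannot cancel against $[X_{-1},Z]\in\lieg_0$; for $C_{\lieg_-}(Z)$ and $F_{\lieg_-}(Z)$ this at once gives $X_{-2}=0$, while for $T_{\lieg_-}(Z)$ the $\lieg_0$--part of $[[Z,X],Z]=2Z$ computes (via the explicit bracket $\lieg_{-2}\times\lieg_1\to\lieg_{-1}$ and isotropy) to a nonzero multiple of $X_{-2}\,\Bbb IZ^*Z$, again forcing $X_{-2}=0$. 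On $\lieg_{-1}$: for $C_{\lieg_-}(Z)$ the equation $[Z,X]=0$ yields $ZX=0$ and the rank--one identity $XZ=\Bbb IZ^*X^*\Bbb I$, forcing $X\in\BC\,\Bbb IZ^*$, so $C_{\lieg_-}(Z)=\BC\,\Bbb IZ^*$; for $F_{\lieg_-}(Z)$ the relation $\ad_X^2(Z)=0$ reads $-2(ZX)X+(X^*\Bbb IX)\Bbb IZ^*=0$ and is equivalent to $ZX=X^*\Bbb IX=0$; and for $T_{\lieg_-}(Z)$, feeding $Z\Bbb IZ^*=0$ into the two identities turns the $\mathfrak{sl}_2$--relations into $ZX=1$ and $X^*\Bbb IX=0$, whose sufficiency for an $\mathfrak{sl}_2$--triple I would check directly.

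\textbf{Part (2), eigenvalues on $\lieg_-$.} For $X\in T_{\lieg_-}(Z)$ I have $A=[Z,X]=(1,A')\in\lieg_0$ with $A'=\Bbb IZ^*X^*\Bbb I-XZ$, settling $A\in\lieg_0$. The key step is the pair of computations $A'X=-X$ and $A'\Bbb IZ^*=\Bbb IZ^*$ (using $ZX=1$, $X^*\Bbb IX=0$, $Z\Bbb IZ^*=0$, and $X^*Z^*=\overline{ZX}=1$), together with $A'$ annihilating $\ker Z\cap\ker(X^*\Bbb I)$. Since the central entry $a=1$ acts by $-1$ on $\lieg_{-1}$ and by $-2$ on $\lieg_{-2}$, one gets $\ad(A)|_{\lieg_{-1}}=A'-\mathrm{id}$, with eigenvalues $0$ on $\BC\,\Bbb IZ^*$, $-1$ on the $(n-2)$--dimensional kernel, and $-2$ on $\BC X$, and $\ad(A)|_{\lieg_{-2}}=-2$. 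Hence all eigenvalues on $\lieg_-$ are non--positive, and the zero eigenspace is the line $\BC\,\Bbb IZ^*=C_{\lieg_-}(Z)$.

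\textbf{Part (2), the torsion $\Bbb V$.} By Killing--form duality $\ad(A)$ has eigenvalues $0$ on $\BC\,\Bbb IX^*$ (with $\Bbb IX^*:=X^*\Bbb I\in\lieg_1$), $+1$ on $\ker X\cap Z^\perp$, and $+2$ on $\BC Z$; I would confirm this through the left--eigenvector computations $(X^*\Bbb I)A'=X^*\Bbb I$ and $ZA'=-Z$ and by identifying the $+1$--eigenspace as $\{W:WX=0,\ W\Bbb IZ^*=0\}=\ker X\cap Z^\perp$. The statements on $\Bbb V\subset\La^2\lieg_1\otimes\lieg_{-1}$ are then bookkeeping on the weights of $W_1\wedge W_2\otimes Y$: a non--positive total weight cannot have $Y\in\BC\,\Bbb IZ^*$, since otherwise both $W_i$ lie in the line $\BC\,\Bbb IX^*$ and the wedge vanishes, which yields $\Bbb V_{st}(A)\subset\La^2\lieg_1\otimes X^\perp$ with $X^\perp=\ker(X^*\Bbb I)$; and a strictly negative total weight forces $Y\in\BC X$ and $W_1\wedge W_2\in\Bbb IX^*\wedge(\ker X\cap Z^\perp)$, which is the claimed $\Bbb V_{ss}(A)$.

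\textbf{Part (2), the curvature $\Bbb U$ (the main obstacle).} The delicate claim, which I expect to be the crux, concerns $\Bbb U\subset\La^{(1,1)}\lieg_1\otimes\lieg_0$, as it couples the weight filtration with the hypothesis $\Phi(\,\cdot\,,\cdot).Z=0$. Working in the complexification, $\ad(A)$ acts on $\lieg_0$ with weights in $\{-2,\dots,2\}$, the $(-2)$--weight space being the line $\BC\,XX^*\Bbb I$. The plan is to note that $\Phi(X,\cdot)=i_X\Phi$ detects only the part of $\Phi$ whose $\lieg_1$--factor contains $Z$ (the Killing--dual of $X$), and that for $\Phi\in\Bbb U_{st}(A)$ a weight count forces any such $Z$--term to be of the form $Z\wedge\Bbb IX^*\otimes\Psi$ with $\Psi\in\BC\,XX^*\Bbb I$. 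A short computation gives $\Psi.Z=-X^*\Bbb I\neq0$, so the hypothesis $\Phi(\,\cdot\,,\cdot).Z=0$ forces the coefficient of this term to vanish; therefore $i_X\Phi=0$, that is $\Phi(Y_1,Y_2)=0$ whenever $Y_1\in\BC X$. The steps demanding the most care are tracking the contragredient $\lieg_0$--action on $\lieg_1$ and confirming that the $(1,1)$--type does not obstruct isolating this single surviving term.
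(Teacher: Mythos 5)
Your proposal follows the paper's own proof nearly step for step: part (1) via the same bracket identities (your explicit computation that the $\lieg_0$--part of $[[Z,X],Z]$ contributes a nonzero multiple of $\Bbb IZ^*Z$ actually fills in the paper's bare appeal to ``grading considerations'' when reducing $T_{\lieg_-}(Z)$ to $\lieg_{-1}$); part (2) via the same action $Y\mapsto -Y-(ZY)X+(X^*\Bbb IY)\Bbb IZ^*$ on $\lieg_{-1}$ and dualization to $\liep_+$; the $\Bbb V$--claims via the same weight bookkeeping (one caution: the wedge of $\Bbb IX^*$ and $i\,\Bbb IX^*$ does \emph{not} vanish in the real $\La^2\lieg_1$ --- the correct reason weight $0$ cannot occur is that the alternating square of a complex line has type $(1,1)$ and so lies outside $\Bbb V\subset\La^{(0,2)}\lieg_1\otimes\lieg_{-1}$, which is how the paper argues; your step is fine only if the wedge is taken in $\La^{(0,2)}$); and the $\Bbb U$--claim via the same two ingredients, namely the weight count forcing any $Z$--term into $Z\wedge\Bbb IX^*\otimes(\lieg_0)_{-2}$ and nontriviality of the $(-2)$--weight space on $Z$. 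Your $\Psi.Z=-X^*\Bbb I\neq0$ is the dual formulation of the paper's argument that a $(-2)$--eigenvector of $\lieg_0$ annihilating $Z$ annihilates all of $\lieg_1$ and hence vanishes; the two are equivalent.

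There is, however, one genuine gap, located exactly at the claim $C_{\lieg_-}(Z)=\BC\cdot\Bbb IZ^*$: your column--space argument from $XZ=\Bbb IZ^*X^*\Bbb I$ proves only the inclusion $C_{\lieg_-}(Z)\subset\BC\cdot\Bbb IZ^*$, and the converse, which you assert with ``so'' and never verify, fails under the stated matrix conventions. For $X=\la\,\Bbb IZ^*$ one has $X^*\Bbb I=\bar\la Z$, hence, in the paper's pair notation for $\lieg_0$,
$$[Z,\la\,\Bbb IZ^*]=\left(0,\;(\bar\la-\la)\,\Bbb IZ^*Z\right),$$
which is nonzero whenever $\la\notin\BR$, since $\Bbb IZ^*Z$ is a nonzero outer product (and $(\bar\la-\la)\Bbb IZ^*Z$ is a legitimate element of the $(2,2)$--block: it is traceless because $\tr(\Bbb IZ^*Z)=Z\Bbb IZ^*=0$, and skew--Hermitian with respect to $\Bbb I$). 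So only the \emph{real} multiples of $\Bbb IZ^*$ centralize $Z$, whereas the $0$--eigenspace of $\ad(A)$ on $\lieg_{-1}$ really is the full complex line $\BC\cdot\Bbb IZ^*$ (your relation $A'\,\Bbb IZ^*=\Bbb IZ^*$ extends $\BC$--linearly since $X^*\Bbb I(i\Bbb IZ^*)=i\,\overline{ZX}=i$); consequently the identification ``$0$--eigenspace $=C_{\lieg_-}(Z)$'' closing your part (2) fails as well. You are in good company: the paper's own proof asserts without computation that ``any complex multiple of $\Bbb IZ^*$ commutes with $Z$,'' so your attempt reproduces the paper's argument, gap included --- but this step cannot be repaired as stated, since with these conventions $C_{\lieg_-}(Z)=\BR\cdot\Bbb IZ^*$; a dimension count of $C_\lieg(Z)$ for the rank--two, square--zero nilpotent corresponding to a null $Z$ confirms this.
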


\begin{proof}
  As in the proof of lemma \ref{lemma.cr.nonnull}, $F_{\lieg_-}(Z)\subset\lieg_{-1}$, and $X\in\lieg_{-1}$ lies in
  $F_{\lieg_-}(Z)$ if and only if $\ad(X)^2(Z)=0$. As before, $[Z,X]=0$ implies $X=\la\Bbb IZ^*$ for some
  $\la\in\BC$.  Conversely, any complex multiple of $\Bbb I Z^*$ commutes with $Z$, so
  $C_{\lieg_-}(Z)$ is as claimed in (1). Next suppose $[[Z,X],X]=0$.  Then $X$ and $\Bbb I Z$ are linearly dependent, in which case $X$ is in $C_{\lieg_-}(Z)$, or $ZX = 0 = X^*\Bbb IX$.  The description of
  $F_{\lieg_-}(Z)$ is now proved.

  For $X\in T_{\lieg_-}(Z)$, grading considerations imply that
  $X\in\lieg_{-1}$, and then $X$ is linearly independent from
  $\Bbb IZ^*$, since otherwise $[Z,X]=0$.  But then $[[Z,X],X]=-2X$ is equivalent to $ZX=1$ and $X^*\Bbb IX=0$.  These two conditions also imply that
  $[[Z,X],Z]=2Z$, so the description of $T_{\lieg_-}(Z)$ follows. 
  
  Now, for
  $X\in T_{\lieg_-}(Z)$, let $A=[Z,X]$ as in (2). The $A$-action on $\lieg_{-2}$ is
  multiplication by $-2ZX=-2$, while the action on $\lieg_{-1}$ maps
  $Y$ to $-Y-ZYX+X^*\Bbb IY \Bbb IZ^*$. Then $X \in \lieg_{-1}$ spans the $-2$-eigenspace of $A$ in $\lieg_{-1}$; $\ker(Z)\cap X^\perp$ is the $-1$-eigenspace; and 
  $\Bbb IZ^*$ spans the $0$-eigenspace. Thus all eigenvalues of $A$ on $\lieg_-$ are nonpositive, and the $0$-eigenspace is $C_{\lieg_-}(Z)$, as stated in (2).
  
  The eigenspace structure on $\frak p_+$
  follows readily by dualization:
  \begin{center}
\begin{tabular}{| c | c |}
\hline
{\bf subspace} & {\bf eigenvalue} \\
\hline
$\BC \cdot\Bbb IX^*$ & $0$ \\
$\ker(X) \cap Z^\perp$ & $1$ \\
$\lieg_{2} \oplus \BC Z$ & $2$ \\ 
\hline
\end{tabular}
\end{center}

Now we can analyze the eigenspace structure on $\Bbb V$ and
$\Bbb U$. On $\La^{(0,2)}\lieg_1$, the eigenvalues of $A$ range from
$1$ to $3$. (The eigenspaces with eigenvalues $0$ and $4$ which are
present in $\La^2\lieg_1$ both come from the alternating square of a
complex line, so they both are contained in $\La^{(1,1)}\lieg_1$.)
Then the values of a map in $\Bbb V_{st}(A)$ lie in the sum of the eigenspaces in $\lieg_{-1}$ with
negative eigenvalues. This sum is exactly $X^\perp\subset\lieg_{-1}$,
which proves the claim on $\Bbb V_{st}(A)$. 

On the other hand, $\Bbb
V_{ss}(A)$ evidently coincides with the $(-1)$--eigenspace of $A$ in
$\Bbb V$, which is the space claimed in the proposition.

The eigenvalues of $A$ on $\lieg_0 \subset \lieg_1 \otimes \lieg_{-1}$
range from $-2$ to $2$, so the possible eigenvalues on $\Bbb U$ range
from $-2$ to $6$. Since $Z$ spans the $(+2)$--eigenspace in $\lieg_1$,
the condition that $\Phi(Y_1,Y_2).Z=0$ implies $\Phi(Y_1,Y_2)$ is in
the sum of $A$-eigenspaces of $\lieg_0$ with eigenvalues $\geq -1$.
If this holds for all $Y_1,Y_2\in\lieg_{-1}$, and if $\Phi\in \Bbb
U_{st}(A)$, then the $\La^{(1,1)}\lieg_1$-components of $\Phi$ lie in
the sum of $0$ and $1$-eigenspaces.  This sum in turn is made up of
components from the $0$ and $1$-eigenspaces in $\lieg_1$, which form
the annihilator of $X$.  Now the last claim of (2) follows.
 %% Next we claim that the minimal eigenvalue of $A$ on $C_\lieg(Z)$ is $-1$.  Let $B \in \lieg_0 \cap C_\lieg(Z)$.  Viewing $\lieg_0$ as a subalgebra of $\mathfrak{gl}(\lieg_{-1}) \cong \lieg_1 \otimes \lieg_{-1}$, we see that the only possible lesser eigenvalue is $-2$, for which the eigenspace is $\Bbb I X^* \otimes X$.    If $B$ annihilates $Z$, it must have trivial component in this subspace.  All eigenvalues on $\lieg_1$ are nonnegative.  The claim for the final case, $C_{\lieg_-}(Z)$, is already established above.
 %%
%%  Now if $\Phi \in \Bbb T_{st}$, and it satisfies the other hypothesis in (2), then the components in $\Lambda^2 \liep_+$ must come from the $0$- and $1$-eigenspaces of $A$ in $\liep_+$; these subspaces span the annihilator of $X$ in $\lieg_1$, so $\Phi(Y_1,Y_2)$ vanishes whenever $Y_1 \in \BC X \oplus \lieg_{-2}$.
  \end{proof}

\begin{proposition}\label{prop.null.cr}
  Let $M$ be a manifold endowed with a nondegenerate, partially
  integrable almost CR structure, and let $\eta \in \alginf(M)$ vanish to higher order at $x_0 \in
  M$. Assume further that the isotropy $\al\in T_{x_0}^*M$ of $\eta$
  restricts to a nonzero, null element of $(H_{x_0})^*$. 
\begin{enumerate} 
\item The set $C(\alpha)$ is the complex line spanned by the elements dual to $\al|_{H_{x_0}}$, while 
$$F(\al)  = \{ \xi\in H_{x_0}\ : \ \al(\xi)=0, \ \Cal L(\xi,J\xi)=0 \}$$
and
$$T(\al) = \{ \xi \in H_{x_0} \ : \ \al(\xi) = 1, \ \Cal L(\xi,J\xi)=0 \}$$

\item  Let $b_0 \in \pi^{-1}(x_0)$ be a point such that $\alpha$ corresponds via $b_0$ to an element of $\lieg_1$.  Then in the normal coordinate chart determined by $b_0$, there is an open neighborhood $U$ of $0$ such that
\begin{enumerate} 
\item Points of $C(\alpha) \cap U$ are higher order fixed points with transverse null isotropy.  Any $\xi \in F(\alpha) \cap U$ is a zero of $\eta$.
  
\item On the cone
$$ S = \{ \xi \in H_{x_0} \ : \ \alpha(\xi) \neq 0, \ \Cal L(\xi, J \xi) = 0 \}$$
the action of the flow along $\eta$ in normal coordinates is
$$\varphi_\eta^t(\xi) = \tfrac{1}{1+t\al(\xi)} \cdot \xi \ \mbox{for} \ t \alpha(\xi) > 0$$
\end{enumerate}

\item The Nijenhuis tensor vanishes on the complex curve of higher order fixed points $C(\alpha) \cap U$ in (1). If the structure is integrable, that is, CR, then the harmonic curvature also vanishes along the complex curve $C(\alpha) \cap U$.
\end{enumerate}
\end{proposition}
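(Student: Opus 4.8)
The plan is to fix $b_0\in\pi^{-1}(x_0)$ so that $\al$ corresponds to an isotropic $Z\in\lieg_1$ (possible because a null isotropy with nonzero $\lieg_1$-component is $P$-conjugate into $\lieg_1$, and then $Z\Bbb IZ^*=0$), and to transport the algebraic data of Lemma \ref{lemma.cr.null} to the manifold exactly as in Propositions \ref{prop.nonnull.cr} and \ref{prop.rank1.grassmannian}. For (1) I would read off $C(\al),F(\al),T(\al)$ from $C_{\lieg_-}(Z),F_{\lieg_-}(Z),T_{\lieg_-}(Z)$ via the dictionary $ZX\leftrightarrow\al(\xi)$ and $X^*\Bbb IX\leftrightarrow\Cal L_{x_0}(\xi,J\xi)$. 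The only point needing checking is the analogue of Proposition \ref{prop.1-graded} for this $|2|$-grading: that $C_\lieg(Z),F_\lieg(Z),T_\lieg(Z)$ have the same image in $\lieg/\liep$ as their intersections with $\lieg_-$. I would verify this directly, using that for $0\neq Z\in\lieg_1$ the map $\lieg_{-2}\to\lieg_{-1}$, $W\mapsto[W,Z]$, is injective, which forces the $\lieg_{-2}$-components of the relevant elements to vanish. In particular $C_{\lieg_-}(Z)=\BC\cdot\Bbb IZ^*$ identifies $C(\al)$ with the complex line spanned by the vector dual to $\al|_{H_{x_0}}$.

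Part (2) is the translation of Propositions \ref{prop.attractor} and \ref{prop.sl2.triple} into normal coordinates, with the same simplifications as in Proposition \ref{prop.nonnull.cr}. Proposition \ref{prop.attractor}(1)--(2) gives that points of $F(\al)\cap U$ are zeros of $\eta$ and that $C(\al)\cap U$ consists of higher order fixed points of the same null transversal type; on the cone $S$ the flow in normal coordinates is the reparametrization $\xi\mapsto\tfrac{1}{1+t\al(\xi)}\xi$ coming from the $SL(2,\BR)$ computation in Proposition \ref{prop.sl2.triple}, with $s$ replaced by $\al(\xi)$.

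The substance is (3). Since every point of $C(\al)\cap U$ is a higher order fixed point of the same type by (2)(a), the arguments below apply verbatim at each such point, so it suffices to prove vanishing at $x_0$. A fact I would use twice is that $T_{\lieg_-}(Z)=\{X\in\lieg_{-1}:ZX=1,\ X^*\Bbb IX=0\}$ spans $\lieg_{-1}$: fixing $X_0\in T_{\lieg_-}(Z)$, for each $Y\in\ker Z$ the null condition on $X_0+\lambda Y$ is a real quadratic in $\lambda$ with a nonzero root, so $Y\in\mathrm{span}\,T_{\lieg_-}(Z)$, and $\ker Z+\BC X_0=\lieg_{-1}$. For the harmonic torsion $N$, Proposition \ref{prop.attr.curv} applied to the $\frak{sl}_2$-triple $(Z,A,X)$ (with $A=[Z,X]\in\lieg_0$ by Lemma \ref{lemma.cr.null}) gives $N(x_0)\in\Bbb V_{st}(A)\subset\La^2\lieg_1\otimes X^\perp$ for every $X\in T_{\lieg_-}(Z)$; as the $X$ span $\lieg_{-1}$, the orthocomplements $X^\perp$ intersect in $\{0\}$, whence $N(x_0)=0$. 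This step uses no integrability hypothesis, which matches the fact that the torsion conclusion in (3) is unconditional.

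For the harmonic curvature under the integrability assumption, Proposition \ref{prop.attr.curv} only yields $\rho(x_0)\in\Bbb U_{st}(A)$, which is not enough by itself; the extra input is the Lie-derivative constraint $Z\cdot\ka(b_0)=0$ coming from $\Cal L_{\tilde\eta}\ka=0$, as in Proposition \ref{prop.rank1.grassmannian}. Integrability makes the geometry torsion-free, so the lowest nonvanishing homogeneous component of $\ka(b_0)$ is the homogeneity-two part $\ka_\rho(b_0)\in\La^2\lieg_1\otimes\lieg_0$, which is harmonic and represents $\rho(x_0)$; compatibility of $\ad(Z)$ with the grading then isolates $Z\cdot\ka_\rho(b_0)=0$. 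Since $[Z,v]\in\lieg_0\subset\liep$ for $v\in\lieg_{-1}$, the module action collapses to $(Z\cdot\ka_\rho)(v_1,v_2)=[Z,\rho_{x_0}(v_1,v_2)]$, so $\rho_{x_0}(v_1,v_2).Z=0$ for all $v_1,v_2$. This is precisely the hypothesis of the last assertion of Lemma \ref{lemma.cr.null}(2), which forces $\rho_{x_0}(Y_1,Y_2)=0$ whenever $Y_1\in\BC X$; varying $X$ over $T_{\lieg_-}(Z)$, whose span is $\lieg_{-1}$, gives $\rho_{x_0}=0$. The hard part is exactly this last paragraph: extracting usable information about $\rho$ at the attractor, since complete reducibility makes the naive equivariance vacuous. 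The resolution is to combine the attractor refinement of Proposition \ref{prop.attr.curv} with the genuine adjoint relation $Z\cdot\ka_\rho(b_0)=0$ read off the lowest homogeneous component — and the identification of that component with $\rho$ needs torsion-freeness, which is why the harmonic-curvature conclusion is available only in the integrable case.
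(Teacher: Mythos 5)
Your proposal is correct and follows essentially the same route as the paper's own proof: parts (1)--(2) are read off from lemma \ref{lemma.cr.null} together with propositions \ref{prop.attractor} and \ref{prop.sl2.triple}, the torsion claim in (3) is exactly the paper's combination of proposition \ref{prop.attr.curv} with $\Bbb V_{st}(A)\subset\La^2\lieg_1\otimes X^\perp$ and the spanning of $\lieg_{-1}$ by $T_{\lieg_-}(Z)$, and the curvature claim is the paper's argument verbatim ($Z\cdot\ka(b_0)=0$ from invariance, torsion--freeness to identify the lowest homogeneous component of $\ka(b_0)$ with $\rho$, then the final assertion of lemma \ref{lemma.cr.null}(2) and spanning), your only additions being details the paper leaves implicit, namely the reduction of $C_\lieg(Z)$, $F_\lieg(Z)$, $T_\lieg(Z)$ to their $\lieg_-$--parts and a proof of the spanning assertion. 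One small repair to the latter: for $Y\in\ker Z$ with $X_0^*\Bbb IY=0$ but $Y^*\Bbb IY\neq 0$ your quadratic in $\la$ has only the zero root, so first note that $X_0+is\,\Bbb IZ^*\in T_{\lieg_-}(Z)$ for all real $s$ (hence $\Bbb IZ^*$ lies in the span) and replace $Y$ by $Y+\mu\Bbb IZ^*$, which leaves $Y^*\Bbb IY$ unchanged while making the linear coefficient nonzero.
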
 

\begin{Pf}
The descriptions of $C(\al)$, $F(\al)$ and $T(\al)$ in (1) follow immediately
from lemma \ref{lemma.cr.null} and then (2)(a) and (b) follow immediately from
propositions \ref{prop.attractor} and \ref{prop.sl2.triple}. 

For (3), let $b_0 \in \pi^{-1}(x_0)$ be such that $Z\in\lieg_1$
is the element corresponding to $\al$ via $b_0$.  Choose $X\in
T_{\lieg_-}(Z)$ and let $A=[Z,X]\in\lieg_0$. Then by proposition
\ref{prop.attr.curv}, the functions corresponding to the harmonic
torsion and the harmonic curvature must map $b_0$ to $\Bbb
V_{st}(A)$ and to $\Bbb U_{st}(A)$, respectively. Now from lemma
\ref{lemma.cr.null}, $\Bbb V_{st}(A)$ consists of maps
having values in $X^\perp$. But for a fixed $Z$, elements $X \in T_{\lieg_-}(Z)$ span $\lieg_{-1}$ over $\BC$.  Thus the harmonic torsion vanishes at $x_0$. This torsion is a scalar multiple of the Nijenhuis
tensor, which then also vanishes.

To complete the argument in the torsion free case, we proceed as in
the proof of proposition \ref{prop.rank1.grassmannian}. Denote by
$\ka:B\to \La^2\lieg_-^*\otimes\lieg$ the Cartan curvature function.  Again from section 2.3 of \cite{cap.srni04}, 
$[\ka_{b_0}(Y_1,Y_2),Z]=0$ for all $Y_1,Y_2\in\lieg_-$. If the
structure is CR, then the Cartan connection is torsion
free; the value $\ka_{b_0}$ is
homogeneous of degrees $\geq 2$, and the lowest non--zero homogeneous
component is harmonic. Hence taking $Y_1,Y_2\in\lieg_{-1}$, the
component in $\lieg_1$ of $[\ka_{b_0}(Y_1,Y_2),Z]$ coincides with the
value of $[\rho_{b_0}(Y_1,Y_2),Z]$, where $\rho$ is the function
corresponding to the harmonic curvature. By lemma \ref{lemma.cr.null},
$[\rho_{b_0}(Y_1,Y_2),Z]=0$ together with $\rho(b_0)\in\Bbb U_{st}(A)$
implies $\rho_{b_0}(Y_1,Y_2)=0$ for $Y_1\in\BC X$ and
$Y_2\in\lieg_{-1}$. Again, because $T_{\lieg_-}(Z)$ spans $\lieg_{-1}$ over $\BC$, the harmonic curvature vanishes at $x_0$, and along the strongly fixed component of $x_0$.
\end{Pf}

Last, we can easily deduce the following local version of the
Schoen-Webster theorem \cite{webster.cr.flow}, \cite{schoen.cr}, which
generalizes the result of \cite{frances.riemannien} from the conformal
Riemannian setting. Versions of this result were obtained by Vitushkin
under the additional assumption of integrability in
\cite{vitushkin.local.schoen} and by Kruzhilin for smooth, integrable
CR structures in \cite{kruzhilin.local.schoen}.

\begin{theorem}
\label{thm.strict.cr}
Let $M^{2n+1}$ be a connected, real-analytic, partially integrable, strictly pseudoconvex CR manifold, and let $\eta \in \alginf(M)$ vanish at $x_0 \in M$.  Then 
\begin{itemize}
\item  There is a neighborhood $U$ of $x_0$ invariant by the flow, on which $\{ \varphi^t_\eta \}$ is bounded and linearizable; or
\item  $M$ is spherical---that is, locally flat as a CR structure.
\end{itemize}
\end{theorem}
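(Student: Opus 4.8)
The plan is to split on the nature of the linear isotropy of the flow and to route every \emph{essential} case into a local flatness statement that real-analyticity then globalizes. Fix $b_0\in\pi^{-1}(x_0)$ and let $Z=\omega_{b_0}(\tilde\eta)\in\liep$ be the isotropy; since $\varphi^t_\eta(x_0)=x_0$ one has $\varphi^t_\eta b_0=b_0e^{tZ}$, so $D_{x_0}\varphi^t_\eta$ is $\Ad(e^{-tZ})$ on $\lieg/\liep$. Taking the Jordan decomposition $Z=Z_s+Z_n$ and using that the projection $\liep\to\lieg_0$ carries ad-nilpotents to ad-nilpotents, I would first adjust $b_0$ by an element of $P_+$ so that $Z_s\in\lieg_0$, $Z_n\in\liep_+$, and $[Z_s,Z_n]=0$. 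For a strictly pseudoconvex structure $\lieg_0\cong\BR A\oplus\mathfrak{u}(n)$ with $A$ the grading element, so $Z_s=cA+Z_0^{\mathrm{ell}}$ with $Z_0^{\mathrm{ell}}$ elliptic.

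If $c=0$ and $Z_n=0$, then $Z=Z_0^{\mathrm{ell}}$ is elliptic and $\{e^{tZ}\}$ has compact closure in $P$; because an automorphism fixing $x_0$ is determined by its frame holonomy in $P$, the flow $\{\varphi^t_\eta\}$ has compact closure $K$ in $\Aut(M,B,\omega)$. Averaging any analytic coordinates over $K$ (Bochner linearization) yields a $K$-invariant, hence flow-invariant, neighborhood $U$ of $x_0$ on which $\{\varphi^t_\eta\}$ is conjugate to its bounded linear part. This is the first alternative.

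In the remaining cases the isotropy is essential and the goal is sphericity. If $c\neq0$, say $c>0$, then $\Ad(e^{tZ})$ contracts $\lieg_-$ as $t\to\infty$, so $e^{tZ}$ is a holonomy path at $b_0$ with attractor $b_0$; by Proposition~\ref{prop.ext.holonomy} it remains a holonomy path with attractor $b_0$ at each $\exp(b_0,Y)$ for small $Y\in\lieg_-$. Since the geometry is regular and normal, $\rho$ and $\tau$ take values in representations of strictly positive $A$-homogeneity, so the eigenvalues of $e^{tZ}$ on them blow up, and Proposition~\ref{prop.curv.stab} forces $\rho$ and $\tau$ to vanish at all these points, i.e.\ on a neighborhood of $x_0$. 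Thus the geometry is flat on an open set, and as $M$ is connected and real-analytic the curvature vanishes on all of $M$, so $M$ is spherical.

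The hard part will be the residual case $c=0$, $Z_n\neq0$ (which contains $Z_s=0$, an honest higher order fixed point where Theorem~\ref{thm.cr}(2) applies at once, since strict pseudoconvexity makes every transverse isotropy non-null). Here the leading dynamics are unipotent, and because the harmonic-curvature representation is completely reducible, $P_+$, and hence $e^{tZ_n}$, acts trivially on it; evaluating $\varphi^t_\eta$-invariance directly on $\Bbb U$ therefore gives no constraint. My plan is to extract genuine contraction from $Z_n$: choose $t_k\to\infty$ with $e^{t_kZ_s}\to e$ (possible since $\overline{\{e^{tZ_s}\}}$ is a torus), so that $g_k=e^{t_kZ}$ is, up to the vanishing elliptic factor, the unipotent sequence $e^{t_kZ_n}$; complete $Z_n$ to an $\mathfrak{sl}_2$-triple inside the reductive centralizer $\lieg^{Z_s}$, with neutral element $A'$ grading $\lieg$ so that $Z_n$ has positive degree. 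Running the $\mathfrak{sl}_2$-reparametrization of Proposition~\ref{prop.sl2.triple} together with the attractor estimate of Proposition~\ref{prop.attr.curv} along this sequence should exhibit a higher order fixed point in the limit and force $\rho$ to vanish on an open set of such fixed points; Theorem~\ref{thm.cr}(2) and real-analyticity then give sphericity. The main obstacle is precisely controlling the interaction of the recurrent elliptic factor $e^{t_kZ_s}$ with the $\mathfrak{sl}_2$-contraction, so that the limit is a genuine higher order fixed point of the correct geometric type and the vanishing genuinely propagates to an open set.
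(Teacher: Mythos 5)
Your overall architecture (elliptic isotropy handled by averaging/linearization; isotropy with nonzero grading component handled by contraction, holonomy propagation, and analytic continuation; honest higher order fixed points handled by Theorem \ref{thm.cr}(2)) agrees with the paper's proof in its easy cases, but there is a genuine gap exactly where you flag one: the mixed case $c=0$, $Z_s=Z_0^{\mathrm{ell}}\neq 0$, $Z_n\neq 0$ is left unproved, and it is a nonempty case --- for instance, the imaginary part of the center of $\lieg_0$ acts trivially on the one-dimensional space $\lieg_2$, so any such elliptic element commutes with any $Z_n\in\lieg_2$. Moreover your sketched plan for this case is unlikely to close: along your recurrence times the holonomy $e^{t_kZ}$ is asymptotically the unipotent path $e^{t_kZ_n}$, and, as you yourself observe, $P_+$ acts trivially on the completely reducible representations carrying the harmonic curvature and torsion, so Proposition \ref{prop.curv.stab} extracts no constraint from such paths; likewise an $\mathfrak{sl}_2$--triple through $Z_n$ with neutral element in the centralizer of $Z_s$ does not feed into Propositions \ref{prop.sl2.triple} and \ref{prop.attr.curv}, because those require the flow itself to be generated by a nilpotent isotropy in $\liep_+$ (a higher order fixed point), not merely to contain one as a Jordan component.

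The paper's key idea, which your proposal is missing, is a second use of real-analyticity beyond unique continuation: by the Frobenius theorem for analytic rigid geometric structures (Gromov; Melnick), the isotropies with respect to $b_0$ of all local Killing fields vanishing at $x_0$ form an \emph{algebraic} subalgebra of $\liep$, and algebraic subalgebras are closed under Jordan decomposition. Hence $Z_n$ itself is the isotropy of some local infinitesimal automorphism vanishing at $x_0$; since ad-nilpotent elements of $\liep$ lie in $\liep_+$ in the strictly pseudoconvex case (every element of $\lieg_0\cong\BR A\oplus\mathfrak{u}(n)$ is ad-semisimple), that auxiliary field has an honest higher order fixed point, Theorem \ref{thm.cr}(2) applies (strict pseudoconvexity makes every transverse isotropy non-null, and the $\lieg_2$ case is covered too), and analyticity globalizes the flatness. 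This disposes of your residual case at once and reduces the whole proof to $Z$ nilpotent or semisimple. One smaller repair: in your $c\neq 0$ case, the claim that $\Ad(e^{tZ})$ contracts $\lieg_-$ requires $Z_n=0$; this is in fact automatic, since $[Z_s,Z_n]=0$ would force each graded component $Z_n^{(j)}\in\lieg_j$ to be an $\ad(Z_0^{\mathrm{ell}})$-eigenvector with real eigenvalue $-jc\neq 0$, impossible for an elliptic element --- but you should verify it, because with $Z_n\neq 0$ the conjugates $\Ad(e^{tZ})(Y)$ for $Y\in\lieg_-$ can blow up and Proposition \ref{prop.ext.holonomy} would not apply.
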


\begin{Pf}
Pick $b_0 \in \pi^{-1}(x_0)$, and let $Z \in \liep$ be the isotropy of $\eta$ at $x_0$ with respect to $b_0$.  Because $M$ is real-analytic, the full isotropy subalgebra at $x_0$ is algebraic; more precisely, the isotropies with respect to $b_0$ of all local Killing fields vanishing at $x_0$ form an algebraic subalgebra of $\liep$.  This fact is a consequence of the so-called Frobenius Theorem of \cite[sec 3.4]{gromov.rgs} and \cite[thm 3.11]{me.frobenius}.  An algebraic subalgebra is closed under Jordan decomposition (see \cite[4.3.4]{morris.ratner.book}), so we may assume $Z$ is either nilpotent or semisimple.

Recall $\liep \cong (\BC \times \mathfrak{su}(n)) \ltimes \mathfrak{u}$, where $\mathfrak{u}$ is the nilpotent radical, isomorphic to a $(2n+1)$-dimensional Heisenberg Lie algebra.  If $Z \in \mathfrak{u}$, then $M$ is flat on a nonempty open set by theorem \ref{thm.cr}, and so $M$ is flat everywhere because it is analytic.

Now suppose $Z$ is semisimple, so it is conjugate into $\lieg_0 \cong \BC \times \mathfrak{su}(n)$.  If $Z$ lies in a maximal compact subalgebra of $\lieg_0$, then $\ad(Z)$ preserves the subspace $\lieg_-$ and a positive-definite inner product on it.  This inner product descends to $T_{x_0} M$, where it is invariant by $\{ D_{x_0} \varphi^t_\eta \}$, the differential of the flow, and it pulls back via the normal coordinate chart determined by $b_0$ to a Riemannian metric on a precompact neighborhood $U$ of $x_0$, which is invariant by $\{ \varphi^t_\eta \}$.  It follows that the flow is bounded and linearizable on $U$.

Now suppose that $Z$ generates an unbounded 1-parameter subgroup, so it can be written $Z = A + K$, where $A$ is a nonzero multiple of the grading element, and $K \in \mathfrak{su}(1) \times \mathfrak{su}(n)$. First compute that for $X \in \lieg_-$,
$$ e^{tZ} e^{sX} = e^{sa^t k_t(X)} e^{tZ}$$
where $\Ad(e^{tZ}) = \Ad(e^{tA}) \circ \Ad(e^{tK}) = a^t \cdot k_t$, for $a \neq 0$ and $k_t \in \mbox{SU}(1) \times \mbox{SU}(n)$.  We can assume that $a <1$, by replacing $\eta$ with $- \eta$ if necessary.  Now proposition \ref{prop.basic.holonomy} impies that all the distinguished curves emanating from $x_0$ tend to $x_0$ under the flow by $\varphi^t_\eta$, and that $e^{tZ}$ is again a holonomy path with attractor $b_0$ along the curves $\exp_{b_0}(sX)$.  Now $e^{tZ}$ satisfies the hypotheses of proposition \ref{prop.ext.holonomy} for all $Y \in \lieg_-$, so it is also a holonomy path at points projecting onto a neighborhood of $\exp_{b_0}(sX)$.  The action of $e^{tZ}$ on the representations for the harmonic curvature and torsion is expanding---that is, any vector $w$ in one of these representations satisfies $||e^{tZ}(w)|| \rightarrow \infty$ as $t \rightarrow \infty$.  By arguments similar to those in proposition \ref{prop.curv.stab}, both the torsion and curvature vanish wherever we have $e^{tZ}$ as a holonomy path.  Thus the curvature vanishes on an open set, so it vanishes everywhere by analyticity.
\end{Pf}

\bibliographystyle{ieeetr.bst}
\bibliography{karinsrefs}

\begin{tabular}{lll}
Andreas \v{C}ap  & \qquad & Karin Melnick \\
Faculty of Mathematics & \qquad & Department of Mathematics \\
University of Vienna & \qquad & University of Maryland \\
Oskar--Morgenstern--Platz 1 & \qquad & College Park, MD 20742 \\
1090 Vienna, Austria & \qquad & USA \\
andreas.cap@univie.ac.at & \qquad & karin@math.umd.edu
\end{tabular}

\end{document}